\newtheorem{lemma}{Lemma}
\newtheorem{theorem}{Theorem}
\newtheorem{definition}{Definition}
\newtheorem{condition}{Condition}
\newtheorem{remark}{Remark}
\newcommand{\xa}{X^{\varepsilon,\theta, H}}
\newcommand{\xia}{\xi^{\theta, H}}
\newcommand{\xian}{\xi^{\theta, H}_{(n)}}
\newcommand{\Xian}{\Xi^{\theta, H}_{(n)}}
\newcommand{\xazero}{X^{\varepsilon, \theta_0, H_0}}
\newcommand{\xiazero}{\xi^{\theta_0, H_0}}
\newcommand{\xianzero}{\xi^{\theta_0, H_0}_{(n)}}
\newcommand{\Xianzero}{\Xi^{\theta_0, H_0}_{(n)}}
\newcommand{\XianzerocurlyH}{\Xi^{\theta_0, \mathcal{H}}_{(n)}}
\newcommand{\XianthetadaggermathcalH}{\Xi^{\theta^\dagger, \mathcal{H}}_{(n)}}
\newcommand{\xe}{X^\varepsilon}
\newcommand{\ye}{Y^\eta}
\newcommand{\tnk}{{t^n_k}}
\newcommand{\tnkm}{{t^n_{k-1}}}
\newcommand{\tnkmm}{{t^n_{k-2}}}
\newcommand{\tnj}{{t^n_j}}
\newcommand{\nk}{{n,k}}
\newcommand{\Deltank}{\Delta_\nk}
\newcommand{\Deltatwonk}{\Delta^{(2)}_\nk}
\newcommand{\tone}{{t_1}}
\newcommand{\ttwo}{{t_2}}
\newcommand{\sone}{{s_1}}
\newcommand{\stwo}{{s_2}}
\newcommand{\var}{\operatorname{Var}}
\newcommand{\cov}{\operatorname{Cov}}
\newcommand{\norm}[1]{\left\lVert #1 \right\rVert}
\begin{document}
\bibliographystyle{plain}
	
	
\author{S. Bourguin, S. Gailus, and K. Spiliopoulos}
\address{Boston University, Department of Mathematics and Statistics\\ 111 Cummington Mall, Boston, MA 02215, USA}
\email[Solesne Bourguin]{bourguin@math.bu.edu}
\email[Siragan Gailus]{siragan@math.bu.edu}
\email[Konstantinos Spiliopoulos]{kspiliop@math.bu.edu}
	
\title{Discrete-time inference for slow-fast systems driven by fractional Brownian motion}
	
\thanks{S. Bourguin was supported in part by the Simons Foundation (Grant 635136), K. Spiliopoulos was supported in part by the National Science Foundation (DMS 1550918) and by the Simons Foundation (Grant 672441)}
	\date{\today}
\begin{abstract}
We study statistical inference for small-noise-perturbed multiscale dynamical systems where the slow motion is driven by fractional Brownian motion. We develop statistical estimators for both the Hurst index as well as a vector of unknown parameters in the model based on a single time series of observations from the slow process only. We prove that these estimators are both consistent and asymptotically normal as the amplitude of the perturbation and the time-scale separation parameter go to zero. Numerical simulations illustrate the theoretical results.
\end{abstract}
	
\subjclass[2010]{60G22, 60H10, 60H07, 62F12}
\keywords{Fractional Brownian motion, multiscale processes, small noise, statistical inference, Hurst index estimation}

\maketitle
	

\section{Introduction}

In this work we consider statistical estimation for small-noise perturbations of multiscale dynamical systems. The main feature of the model is that the random perturbation of the slow motion of the system arises from a fractional Brownian motion (fBm), thereby making it possible to capture dynamical features that are out of the scope of the standard Brownian motion. More precisely, we consider $(\xe, \ye)_T = \{(\xe_t,
\ye_t)\}_{0\leq t\leq T}$ evolving in
$\mathcal{X}\times\mathcal{Y}:=\mathbb{R}^{m}\times\mathbb{R}^{d-m}$
according to the stochastic differential equation
\begin{equation}
\label{Eq:ModelSystem}
\begin{cases} d\xe_t =
c_{\theta}(\xe_t, \ye_t) dt + \sqrt\epsilon \sigma(\ye_t) dW^H_t \\ d\ye_t = \frac{1}{\eta} f(\ye_t) dt
+ \frac{1}{\sqrt\eta} \tau(\ye_t) dB_t \\
\xe_0=x_0\in\mathcal{X}, \hspace{1pc} \ye_0=y_0\in\mathcal{Y}.
\end{cases}
\end{equation}

Here, $W^H$ is a fractional Brownian motion (fBm) with Hurst index
$H\in(1/2,1)$ and $B$ is a standard Brownian motion (Bm) independent
of $W^H$. The term $dW^H$ is to be understood in the sense of pathwise
integration, although this pathwise integral coincides in our
framework with the analogous divergence
integral. $\varepsilon:=(\epsilon,\eta)\in\mathbb{R}^2_+$ is a pair of
small positive parameters. Note that $\eta$ is the time-scale
separation parameter while $\epsilon$ dictates the size of the noise.

In this work, we study estimation of the Hurst index $H\in(1/2,1)$ and the vector $\theta\in \Theta$, where $\Theta$ is an open, bounded, and convex subset of a Euclidean space, based on a time-series sampled from a realization of the slow process $\xe$ in the regime of $\varepsilon := (\epsilon,\eta) \to 0$.

It is widely understood that data coming from physical systems can exhibit multiple characteristic scales in time or space. Stochastic noise is often included to account for uncertainty or as an intrinsic feature of a given model. It is perhaps most common to model the noise with standard Brownian motion. In this work we consider instead the choice of fractional Brownian motion with a Hurst index $H$ that may not be known but that must rather be estimated from empirical observations. It is well known that fBm is a one-parameter extension of standard Bm, which is recovered when $H=1/2$. Multiscale SDE models like
(\ref{Eq:ModelSystem}) with $H=1/2$ (i.e., models in which $\xe$ is perturbed by a standard Bm) have been applied in a wide variety of fields, including chemistry, biology, neuroscience, meteorology, econometrics, and mathematical finance. The reader is encouraged to consult \cite{chauviere2010cell,jean2000derivatives,janke2008rugged,jirsa2014nature,majda2008applied,zhang2005tale,zwanzig1988diffusion}, to offer just a few examples in the applied literature.

A model like (\ref{Eq:ModelSystem}) can be interpreted as perturbation of an underlying deterministic dynamical
system, say $\dot{\bar{X}} = \bar{c}(\bar{X})$, by small noise and multiple scales, see for example \cite{FS, FWBook, jirsa2014nature, Spiliopoulos2012,zwanzig1988diffusion}. To be more specific, consider the situation where $\bar{c}(x)$ is given as  the
integral of a given function with respect to a given measure $\mu$. In such a situation, the dynamical system $\dot{\bar{X}} = \bar{c}(\bar{X})$ is in fact a small noise perturbation of a system of slow and fast motion and  the measure
$\mu$ is the invariant measure of the fast motion. See also \cite{guy2014parametric} for further motivation on models with small noise diffusion. In addition, models like  (\ref{Eq:ModelSystem}) also arise when one considers systems with mulitple
scale but one is interested in small time asymptotics, see for example \cite{feng2012small, spiliopoulos2014fluctuation}. 

Whereas the standard Bm case has been studied extensively, the mathematical theory of multiscale models with $H\neq 1/2$ is considerably less developed. In our recent work \cite{fluctuations_fbm_multiscale}, we obtained results on averaging, homogenization, and fluctuations corrections for models like (\ref{Eq:ModelSystem}) considered in the limit as $\varepsilon := (\epsilon,\eta) \to 0$; see also \cite{hl:averagingdynamics, PeiInahamaXu2020} for related averaging results.

In the standard Bm case $H=1/2$, estimation of $\theta$ has been well studied both in the absence of mutiple scales, when $\eta \equiv 1$, as well as the presence of multiple scales, when $\eta \to 0$. For the case of $H=1/2$ and $\eta \equiv 1$, we refer the interested reader to \cite{bishwal2008parameter,guy2014parametric, KutoyantsSmallNoise, KutoyantsStatisticalInference,rao1999statistical,SorensenUchida,Uchida} for estimation of the unknown vector $\theta$ based on continuously and discretely observed data. For the case $H=1/2$ and $\eta \to 0$, we refer the interested reader to \cite{azencott2013sub,azencott2010adaptive,gs:disctime,gs:statinf,krumscheid2013semiparametric,papavasiliou2009maximum,pavliotis2007parameter, spiliopoulos2013maximum} for estimation of $\theta$ in the presence of multiple scales based on continuously and discretely-observed data.

In the case of $H\neq 1/2$ and $\eta \equiv 1$, statistical estimation of the Hurst parameter $H$ has recently been studied, under various assumptions, in works such as \cite{Berzin2014,ChronopoulouViens2009,Coeurjolly2001,GairingEtAll2019,istaslang,kubiliusskorniakov,kubiliusskorniakovMelichov}.

The focus of this paper is different. We develop the theory of estimation of both $H$ and $\theta$ for multiscale models like (\ref{Eq:ModelSystem}) in the multidimensional case. Our main assumption is that we are given only a discrete-time sample $\{x_{t_{k}}\}_{k=1}^{n}$ from a single observation of the slow process $\xe$; we assume that no data are available from the fast process $\ye$. We develop strongly consistent and asymptotically normal statistical estimators for both $H$ and $\theta$ in the asymptotic regime in which $\varepsilon := (\epsilon,\eta) \to 0$. The limiting variance is precisely calculated in all cases. To the best of our knowledge, this is the first paper on discrete-time estimation of multidimensional multiscale models perturbed by fractional Brownian motion.


Note that our analysis is focused on the case where $H \in (1/2,1)$. This
comes from the fact that this is the only range of values for $H$ where
the interpretation of stochastic integrals as pathwise integrals or as
divergence integrals can coexist. We need the pathwise interpretation
for justifying the existence of a solution to our slow-fast system \eqref{Eq:ModelSystem}
and the divergence interpretation to exploit the tools of the
Malliavin stochastic calculus of variations applied to
(transformations) of the solution of \eqref{Eq:ModelSystem}. It is worth mentioning that
a possible approach for the case $H \in (1/4,1/2)$ could be the so-called
rough paths interpretation of the stochastic integral with respect to
fBm appearing in our system \eqref{Eq:ModelSystem}, but the interactions with Malliavin
calculus that are needed here do not exist. For $H \in (0,1/4]$, very little
is known in terms of solving \eqref{Eq:ModelSystem}.We also mention that the assumption
of independence of $B$ and $W^H$ comes from reasons that are technical in
nature. Indeed, some crucial estimates used in the proofs of the
theoretical results obtained in our recent work \cite{fluctuations_fbm_multiscale} necessitate that
the pathwise and divergence integrals with respect to the fBm $W^H$
coincide, which only happens when one assumes independence. As we make
use of some of these results here, we need the same assumptions. The
introduction of \cite{fluctuations_fbm_multiscale} contains a more detailed and technical discussion
of the reasons leading to the independence assumption, which we refer
the reader to.

The rest of the paper is organized as follows. In Section \ref{S:Assumptions}, we describe our assumptions and related preliminary results. In Section \ref{S:EstimationH}, we present and analyze two estimators for the Hurst index $H$, one that requires knowledge of the magnitude $\epsilon$ of the perturbing noise and one that does not require this knowledge. The trade-off is that the latter has higher limiting variance than the former. Neither estimator for $H$ requires knowledge of $\theta$. In Section \ref{S:TFE1}, we present and analyze a trajectory-fitting estimator (TFE) for the estimation of $\theta$, which does not require knowledge of $H$. We then study in Section \ref{S:DriftEstimationMLE} an estimator for $\theta$ which is based on the principle of maximum likelihood applied to the fluctuations results of \cite{fluctuations_fbm_multiscale}. The advantage of this alternative estimator over the TFE presented in section \ref{S:TFE1} is that it has smaller limiting theoretical variance, at least when the Hurst index is known. One clear disadvantage is that its implementation is computationally more challenging in that it involves inversion of matrices which are very large in typical cases. The estimator is nevertheless of theoretical interest, as well as practical interest in cases in which the computational challenges can be met. In Section \ref{S:NumericalExamples}, we present data from numerical simulations to illustrate the theoretical results. For the convenience of the reader we have included in Appendix \ref{B:Appendix} and \ref{A:Appendix} respectively a technical lemma and an introduction to fBm and necessary results from the Malliavin calculus.

\section{Conditions and preliminary results}\label{S:Assumptions}
In this section we introduce notation, present the conditions that we will assume throughout the paper, and recall necessary results of \cite{fluctuations_fbm_multiscale}. We shall suppress the dependence on $\theta$ for expository purposes.

We will denote by $A:B$ the Frobenius inner product $\Sigma_{i,j}[a_{i,j}\cdot b_{i,j}]$ of matrices $A=(a_{i,j})$ and $B=(b_{i,j})$.
We will use single bars $|\cdot|$ to denote the Frobenius (or Euclidean) norm of a matrix, and double bars $||\cdot||$ to denote the operator norm.

Condition \ref{c:regularity} below imposes conditions of growth and regularity on the drift and diffusion coefficients of the model.
\begin{condition}\label{c:regularity}\hspace{1pc}\newline
\hspace{1pc}\newline
\noindent Conditions on $c_\theta$:
\begin{enumerate}[-]
	\item $\exists$\hspace{0.5pc}$(K,q)\in\mathbb{R}^2_+$, $r\in[0,1)$; $\forall \theta \in \Theta$, $\left|c(x,y)\right|\leq K(1+|x|^r)(1+|y|^q)$
	\item $\exists$\hspace{0.5pc}$(K,q)\in\mathbb{R}^2_+$; $\forall \theta \in \Theta$, $\left|\nabla_x c(x,y)\right|+\left|\nabla_x\nabla_x c(x,y)\right|\leq K(1+|y|^q)$
	\item $c$, $\nabla_x c$, $\nabla_x \nabla_x c$, and $\nabla_y \nabla_y c$ are continuous in $(\theta, x,y)$
	\item $c$, $\nabla_x c$, and $\nabla_x \nabla_x c$ are H\"older continuous in $y$ uniformly in $(\theta, x)$
	\item $c$ has two locally-bounded derivatives in $\theta$ with at most polynomial growth in $x$ and $y$
\end{enumerate}

\noindent Conditions on $\sigma$:
\begin{enumerate}[-]
	\item $\exists$\hspace{0.5pc}$(K,q)\in\mathbb{R}^2_+$; $|\sigma(y)|\leq K(1+|y|^q)$
	\item $\sigma\sigma^T$ is uniformly nondegenerate
\end{enumerate}

\noindent Conditions on $f$ and $\tau$:
\begin{enumerate}[-]
	\item $f$ and $\tau\tau^T$ are twice differentiable, and, along with their first and second derivatives, are H\"older continuous
	\item $\tau\tau^T$ is uniformly bounded and uniformly nondegenerate.
\end{enumerate}

\end{condition}
Condition \ref{c:recurrencebasic} is a basic condition of recurrence type on the fast component, yielding ergodic behavior.
\begin{condition}\label{c:recurrencebasic}\hspace{1pc}\newline
\begin{align*}
\lim_{|y|\to\infty} y \cdot f(y) &= -\infty.
\end{align*}
\end{condition}

To derive most of our results we shall in fact assume a stronger recurrence condition.
\begin{condition}\label{c:recurrence}\hspace{1pc}\newline
For real constants $\alpha>0$, $\beta\geq2$, and $\gamma>0$, we shall write:
\begin{enumerate}[-]
	\item Condition \ref{c:recurrence}-$(\alpha,\beta)$: one has
	\begin{align*}
	y \cdot f(y) + \alpha |y|^\beta + \frac12 (\beta-2+d-m)\sup_{\tilde y\in\mathcal{Y}}|\tau(\tilde y)|^2 \leq 0
	\end{align*}
	for $|y|$ sufficiently large
	\item Condition \ref{c:recurrence}-$(\alpha,\beta,\gamma)$:
	Condition \ref{c:recurrence}-$(\alpha,\beta)$ holds and, moreover, one has $||\nabla_x c(x,y)|| \leq \gamma|y|^\beta$ for $|y|$ sufficiently large.
\end{enumerate}
\end{condition}

In Section \ref{S:NumericalExamples}  we present numerical examples for specific systems satisfying Conditions \ref{c:regularity}-\ref{c:recurrence}. The numerical results demonstrate that the theoretical results should also hold when the growth of $c_{\theta}$ is linear in $x$, i.e., when $r=1$ in Condition \ref{c:regularity} instead of the sub-linear growth assumption under which we are able to prove our results.

\begin{remark} Clearly, Condition \ref{c:recurrencebasic} is implied by Condition \ref{c:recurrence}-$(\alpha, \beta)$, which in turn is implied by the stronger condition
\begin{align*}
\lim_{|y|\to\infty} y \cdot f(y) + \alpha |y|^\beta &= -\infty.
\end{align*}
\end{remark}
One has the infinitesimal generator
\begin{equation*}
\mathcal{L}:=f\cdot\nabla_y+\frac12(\tau\tau^T):\nabla_y^2
\end{equation*}
for the rescaled fast dynamics. Conditions \ref{c:regularity} and \ref{c:recurrencebasic} are enough to guarantee that one has on $\mathcal{Y}$ a unique invariant measure $\mu$ corresponding to the operator $\mathcal{L}$, as discussed for example in \cite{ReyBellet2006}.

\begin{remark}
Therefore, in particular, the process $\ye$, obtained as the solution of an SDE that does not depend on $\xe$, does not explode and is well defined for all times. Meanwhile, Condition \ref{c:regularity} guarantees that the drift coefficient of $\xe$ is Lipschitz continuous in the variable $x$ locally in the variable $y$. Thus one sees that our assumptions are sufficient to guarantee that $\xe$ is well defined on $[0, T]$ (compare the situation with, e.g., \cite[Sections 2 and 4]{pardoux2001poisson}). For general results on existence and uniqueness of solutions of equations with standard and fractional Brownian motions, see for instance \cite{guerranualart, kubiliusfractionalbrownianmotion, kubiliuspsemimartingale, Mishurashevchenko}.
\end{remark}

Finally, let us recall the main convergence result of
\cite{fluctuations_fbm_multiscale}, which we shall use frequently
in this work. By \cite[Theorem 3]{pardoux2003poisson}, the equations
\begin{align}
\mathcal{L}\Phi(x,y) &= -\left( c(x,y)-\bar{c}(x) \right) \nonumber\\
 \int_{\mathcal{Y}}\Phi(x,y)&\mu(dy)=0,\label{poissonequation0}
\end{align}
 where $\bar c$ is the averaged function
\begin{align*}
\bar c(x) &:= \int_{\mathcal{Y}}c(x,y)\mu(dy),
\end{align*}
admit a unique solution $\Phi$ in the class of functions that grow at most polynomially in $|y|$ as $y\to\infty$.

\begin{theorem}[Theorems 1 and 2 in \cite{fluctuations_fbm_multiscale}]\label{T:LimitBehavior}
Assume Conditions \ref{c:regularity} and \ref{c:recurrence}-$(\alpha,\beta,\gamma)$, where $\alpha \geq 0$, $\beta \geq 2$, $\gamma \geq 0$, and $T \beta \gamma \sup_{y \in \mathcal{Y}} ||\tau(y)||^2 < 2 \alpha$. For any $0 < p < \frac{2 \alpha}{T \beta \gamma \sup_{y \in \mathcal{Y}} \|\tau(y)\|^2}$, there is a constant $\tilde K$ such that for $\varepsilon:=(\epsilon,\eta)$ sufficiently small,
\begin{align*}
E\sup_{0\leq t\leq T}\left|\xe_t - \bar X_t\right|^p&\leq \tilde K \left( \sqrt\epsilon^p + \sqrt\eta^p \right),
\end{align*}
where $\bar X$ is the (deterministic) solution of the integral equation
\begin{align*}
\bar X_t &= x_0 + \int^t_0 \bar c(\bar X_s)ds.
\end{align*}

Now suppose in addition that $\eta = \eta(\epsilon)$ and that $\lim_{\epsilon\to0}\frac{\sqrt{\eta}}{\sqrt{\epsilon}} =: \lambda \in [0,\infty)$. Let $\Phi$ be as in \eqref{poissonequation0} and set $\Sigma_\Phi := (\overline{(\nabla_y\Phi\tau)(\nabla_y\Phi\tau)^T})^{1/2}$. The family of processes $\left\{\xi^\varepsilon:=\frac{1}{\sqrt{\epsilon}}(\xe-\bar{X})\right\}_\epsilon$ converges in distribution on the space $C([0,T]; \mathcal{X})$ (endowed, as usual, with the topology of uniform convergence) as $\epsilon\to0$ to the law of the solution $\xi$ of the mixed SDE
\begin{align*}
\begin{cases}
\displaystyle \xi_t = \int^t_0 (\nabla_x \bar c)(\bar X_s) \cdot \xi_s ds + \lambda \int^t_0 \Sigma_\Phi(\bar X_s) d\tilde{B}_s +  \bar \sigma  \tilde{W}^H_t\\
\displaystyle \xi_0=0
\end{cases},
\end{align*}
where $\bar \sigma := \int_{\mathcal{Y}}\sigma(y)\mu(dy)$, $\tilde{W}^H$ is a fractional Brownian motion with Hurst
index $H$, and $\tilde{B}$ is a standard Brownian
motion independent of $\tilde{W}^H$.
\end{theorem}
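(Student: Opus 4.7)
The proof naturally splits into the averaging bound (first assertion) and the functional central limit theorem (second assertion). The unifying device is the Poisson equation $\mathcal{L}\Phi(x,\cdot)=-(c(x,\cdot)-\bar c(x))$, which lets one trade the ergodic average against an Itô boundary term that is small in $\eta$. Throughout, moment control of $\ye$ and polynomial-growth bounds on $\Phi$ and its derivatives (guaranteed by the recurrence exponents $\alpha,\beta,\gamma$ together with the constraint $T\beta\gamma\sup\|\tau\|^2<2\alpha$) will be the workhorses.

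For the averaging bound, I would write
\begin{align*}
\xe_t-\bar X_t &= \int_0^t [c(\xe_s,\ye_s)-\bar c(\xe_s)]\,ds + \int_0^t [\bar c(\xe_s)-\bar c(\bar X_s)]\,ds + \sqrt\epsilon\int_0^t \sigma(\ye_s)\,dW^H_s,
\end{align*}
and apply Itô's formula to $\Phi(\xe_t,\ye_t)$. The singular $1/\eta$ contributions collapse via the Poisson equation to $-(1/\eta)(c-\bar c(\xe))$, yielding after rearrangement
\begin{align*}
\int_0^t [c(\xe_s,\ye_s)-\bar c(\xe_s)]\,ds = -\eta[\Phi(\xe_t,\ye_t)-\Phi(x_0,y_0)] + R_t,
\end{align*}
where $R_t$ collects a $B$-martingale of size $\sqrt\eta$ arising from $\nabla_y\Phi\,\tau\,dB$, a pathwise fBm integral of size $\eta\sqrt\epsilon$ arising from $\nabla_x\Phi\,\sigma\,dW^H$, and an $\mathcal{O}(\eta)$ Lebesgue term from $\nabla_x\Phi\cdot c$. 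Standard $L^p$ estimates then give $\|R_t\|_{L^p}=\mathcal{O}(\sqrt\eta)$; the noise term $\sqrt\epsilon\int_0^t\sigma(\ye_s)\,dW^H_s$ is of order $\sqrt\epsilon$ in $L^p$ via Young-type inequalities; and the Lipschitz averaged term $\bar c(\xe)-\bar c(\bar X)$ is absorbed by a Gronwall argument.

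For the fluctuations, I would rescale by $1/\sqrt\epsilon$ and repeat the decomposition. The $\bar c(\xe)-\bar c(\bar X)$ piece linearizes, by a Taylor expansion in $x$ and the averaging bound just proved, to $\int_0^t \nabla_x\bar c(\bar X_s)\cdot\xi^\varepsilon_s\,ds + o(1)$. The fast-fluctuation piece becomes, by the same Itô-plus-Poisson manipulation,
\begin{align*}
\frac{\sqrt\eta}{\sqrt\epsilon}\int_0^t \nabla_y\Phi(\xe_s,\ye_s)\,\tau(\ye_s)\,dB_s + o(1),
\end{align*}
whose conditional quadratic variation converges uniformly on $[0,T]$ (by averaging of $(\nabla_y\Phi\,\tau)(\nabla_y\Phi\,\tau)^T$) to $\lambda^2\int_0^t\Sigma_\Phi^2(\bar X_s)\,ds$, so by a martingale CLT conditional on $W^H$ it converges jointly with the fBm integral to $\lambda\int_0^t\Sigma_\Phi(\bar X_s)\,d\tilde B_s$ for an independent standard Brownian motion $\tilde B$. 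The remaining term $\int_0^t\sigma(\ye_s)\,dW^H_s$ is the delicate one: using that pathwise and divergence integrals coincide (precisely because $B\perp W^H$), one rewrites it as a Skorohod integral plus a Malliavin trace correction, then averages $\sigma(\ye_s)$ in H\"older norm to obtain convergence to $\bar\sigma\tilde W^H_t$.

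Tightness of $\{\xi^\varepsilon\}$ in $C([0,T];\mathcal{X})$ follows from uniform moment estimates on increments (a byproduct of the $L^p$ bounds above), and any subsequential limit is identified with the stated linear mixed SDE, whose Lipschitz coefficients give a unique solution. The step I expect to be the main obstacle is the joint convergence of the Brownian-driven martingale part and the fBm integral: they must be shown to converge to \emph{independent} drivers $\tilde B$ and $\tilde W^H$, and the averaging of $\sigma(\ye)\,dW^H$ cannot be handled by classical semimartingale methods. It genuinely requires the Malliavin calculus machinery—Meyer-type inequalities for divergence integrals and Malliavin differentiability of $\ye$ with respect to $B$ alone—which is exactly why the assumption $B\perp W^H$ is unavoidable.
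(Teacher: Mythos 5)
This statement is quoted verbatim from the companion paper \cite{fluctuations_fbm_multiscale} (it is labeled ``Theorems 1 and 2'' of that work) and is not proved anywhere in the present paper, so there is no in-paper proof to compare against. That said, your sketch follows exactly the strategy that the cited work is described as using and that the introduction here corroborates: the Poisson-equation/corrector trick to convert the ergodic deviation $c-\bar c$ into an $O(\sqrt\eta)$ boundary-plus-martingale term, Gronwall for the averaging rate, a conditional martingale CLT for the $\tilde B$-component of the fluctuations, and the identification of the pathwise fBm integral with a divergence integral (valid precisely because $B\perp W^H$) so that Malliavin-calculus maximal inequalities can average $\sigma(\ye)$ to $\bar\sigma$. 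Your bookkeeping of the remainder orders ($\sqrt\eta$ martingale, $\eta\sqrt\epsilon$ fBm term, $O(\eta)$ Lebesgue term) is consistent, and you correctly flag the two genuinely delicate points --- joint convergence to \emph{independent} limiting drivers and the non-semimartingale treatment of $\int\sigma(\ye)\,dW^H$ --- which is where the role of the constraint $T\beta\gamma\sup_y\|\tau(y)\|^2<2\alpha$ (polynomial moment control of $\Phi(\xe,\ye)$ uniformly in $\eta$) enters. The outline is sound as a roadmap; a full verification would require the detailed estimates of \cite{fluctuations_fbm_multiscale}.
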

\section{Estimation of the Hurst parameter H}\label{S:EstimationH}
This section is devoted to introducing and studying two estimators of
the Hurst parameter of the fractional Brownian motion appearing in our
slow-fast system \eqref{Eq:ModelSystem}. The first estimator (Theorem \ref{T:ConvergenceForHatH1}) that we
consider requires knowledge of the magnitude of the noise
$\epsilon>0$ (which is not necessarily always possible depending on
the framework considered in practice), whereas the second one (Theorem \ref{T:ConvergenceForHatH2}) does not require knowledge of the
magnitude of the noise $\epsilon>0$. Before stating the main results
of this section, we introduce some notation as well as a few preliminary lemmas to be used in the
proofs of the two main results dealing with the asymptotic properties
of the two estimators.

~
\newline
For each $n\in\mathbb{N}$ and $k\in\{0, 1, \cdots, n\}$, let $\tnk:=Tk/n$. We denote first- and second-order filtered observations of a stochastic process $\{z_t\}_{0 \leq t \leq T}$ by
\begin{align*}
\Deltank z &:= z_\tnk-z_\tnkm,\\
\Deltatwonk z &:= z_\tnk-2z_\tnkm+ z_\tnkmm.
\end{align*}
We first wish to obtain control, as $\varepsilon\to0$ and
$n\to\infty$, over the discrepancy between the behaviour of
$\frac{1}{\epsilon}(\Deltatwonk\xe)^2$ and that of
${\bar\sigma}^2(\Deltatwonk W^H)^2$. To this end, we have the
following three lemmas.
\begin{lemma}\label{L:FilteredDifferences}
Let $\xe$ and $\ye$ be as in (\ref{Eq:ModelSystem}). Assume Conditions \ref{c:regularity} and \ref{c:recurrencebasic}. For any $ 1 \leq p < \infty $ and $ \zeta > 0 $,
\begin{align}
\Deltatwonk \xe &-\sqrt{\epsilon} \Deltatwonk\left[ \int^\cdot_0 \sigma(\ye_s) dW^H_s \right]= \frac{1}{n}o( \eta^{ - \zeta } )\nonumber
\end{align}
in $L^p(\Omega)$ as $\eta\to0$, uniformly in $n\in\mathbb{N}$, uniformly in $k\in\{2, \cdots, n\}$, and uniformly in $\epsilon\in(0,1]$.
\end{lemma}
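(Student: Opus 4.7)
The plan is to exploit the integral form of the SDE for $\xe$: since
\[
\xe_t = x_0 + \int_0^t c(\xe_s, \ye_s)\, ds + \sqrt\epsilon \int_0^t \sigma(\ye_s)\, dW^H_s,
\]
the fBm-driven term cancels between $\Deltatwonk \xe$ and $\sqrt\epsilon\, \Deltatwonk [\int_0^\cdot \sigma(\ye_s)\, dW^H_s ]$, leaving only the second-order difference of the drift integral. Bounding this pointwise by $\int_{\tnkmm}^{\tnk} |c(\xe_s, \ye_s)|\, ds$, an integral over a window of length $2T/n$, reduces the problem to providing an $L^p(\Omega)$ bound on $|c(\xe_s, \ye_s)|$ that is uniform in $s \in [0,T]$ and in $\epsilon \in (0,1]$ and that is $o(\eta^{-\zeta})$ as $\eta \to 0$.

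Next I would apply Minkowski's integral inequality together with the growth bound $|c(x,y)| \leq K(1 + |x|^r)(1 + |y|^q)$ from Condition \ref{c:regularity} and Cauchy--Schwarz, so that the remaining task is to control $\sup_{s \in [0,T]} \| 1+|\xe_s|^r \|_{L^{2p}(\Omega)}$ and $\sup_{s \in [0,T]} \| 1+|\ye_s|^q \|_{L^{2p}(\Omega)}$. For $\ye$, a time change $\tilde Y_u := \ye_{\eta u}$ removes the $\eta$ from the coefficients; standard Lyapunov arguments under Condition \ref{c:recurrencebasic}, combined with the boundedness of $\tau\tau^T$, then give a bound on $\sup_{u \geq 0} \E |\tilde Y_u|^{p'}$ depending only on $p'$ and $y_0$, hence uniform in $\eta$. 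For $\xe$, the sub-linear growth $r \in [0,1)$ of $c$ in $x$ is the decisive ingredient: it allows a Gronwall argument to close on $\E \sup_{s \leq t} |\xe_s|^{p'}$, producing a bound uniform in $\epsilon \in (0,1]$ and at most polynomial in the moments of $\ye$, with the norm of $\sqrt\epsilon \int_0^t \sigma(\ye_s)\, dW^H_s$ handled by the Young/divergence estimates collected in Appendix \ref{A:Appendix}.

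With these moment bounds in hand, the $L^p$ norm of $\Deltatwonk [\int_0^\cdot c(\xe_s, \ye_s)\, ds ]$ is bounded by an $(\epsilon,\eta)$-independent constant times $1/n$, and any such constant is trivially $o(\eta^{-\zeta})$ as $\eta \to 0^+$, yielding the claim. The main obstacle I anticipate is making the Gronwall closure for $\xe$ genuinely uniform in $\epsilon \in (0, 1]$: the pathwise nature of the fBm integral together with the polynomial growth of $\sigma$ in $y$ must be combined with the uniform-in-$\eta$ moments of $\ye$ in a way that keeps the Gronwall constant finite; everything else is routine.
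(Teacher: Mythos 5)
Your proposal is correct and follows essentially the same route as the paper: cancel the fBm integral so that only the second-order difference of the drift integral remains, bound it by $\int_{\tnkmm}^{\tnk}|c(\xe_s,\ye_s)|\,ds$, and use the polynomial growth of $c$ together with moment bounds on $\xe$ and $\ye$ to get a bound of order $1/n$ times a quantity that is $o(\eta^{-\zeta})$. The only difference is in how the moment bounds are obtained: the paper simply invokes \cite[Lemma 2]{fluctuations_fbm_multiscale} for $E\sup_t|\xe_t|^r$ and Lemma \ref{ymaximal} for $E\sup_t|\ye_t|^q$ (moments of the running supremum, which are only $o(\eta^{-\zeta})$ rather than uniform in $\eta$), whereas you re-derive pointwise-in-time moment estimates via a time change and Lyapunov argument for $\ye$ and a Gronwall argument (using $r<1$ and the maximal inequality for the fBm integral) for $\xe$; both suffice for the stated $\frac{1}{n}o(\eta^{-\zeta})$ conclusion.
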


\begin{proof}[Proof of Lemma \ref{L:FilteredDifferences}]
We will show that the estimate is valid for the first-order filters, i.e., that one has
\begin{align}
\Deltank \xe &-\sqrt{\epsilon} \Deltank\left[ \int^\cdot_0 \sigma(\ye_s) dW^H_s \right]= \frac{1}{n}o( \eta^{ - \zeta } ), \nonumber
\end{align}
uniformly in $k\in\{1, \cdots, n\}$; it is clear that the same estimate may then be used for the second-order filters.
By assumption, there are positive constants $K, q, r$ for which $ |c(x,y)| \leq K ( 1 + |x|^r + |y|^q ) $, so that
\begin{align*}
E \left| \int^\tnk_\tnkm c(\xe_s, \ye_s) ds \right| & \leq \frac{KT}{n} \left( 1 + E \sup_{ \tnkm \leq t \leq \tnk } |\xe_t|^r + E \sup_{ \tnkm \leq t \leq \tnk } |\ye_t|^q \right), \\
\end{align*}
whence by \cite[Lemma 2]{fluctuations_fbm_multiscale} and Lemma \ref{ymaximal}, it follows that for any $ \zeta > 0 $, one has
\begin{align*}
n \cdot \Deltank\left[\int^\cdot_0c(\xe_s, \ye_s) ds\right] & = o( \eta^{ - \zeta } )
\end{align*}
in $L^1(\Omega)$ as $\eta\to0$, uniformly in $n\in\mathbb{N}$,
uniformly in $k\in\{1, \cdots, n\}$, and uniformly in
$\epsilon\in(0,1]$. By Jensen's inequality this is easily extended to
a statement in $L^p(\Omega)$. As mentioned earlier, the statement is also valid with second-order filters $\Deltatwonk$ in place of first-order filters $\Deltank$.
\end{proof}

\begin{lemma}\label{L:FilteredDifferencesDiffusion}
Let $\xe$ and $\ye$ be as in (\ref{Eq:ModelSystem}). Assume, for some $\alpha > 0$ and $\beta \geq 2$, Conditions \ref{c:regularity} and \ref{c:recurrence}-$(\alpha, \beta)$. For any $ 2 \leq p < \infty $, $ \kappa > 0 $, and $ 0 < \zeta < (\kappa/2) \wedge (1/p) $,
\begin{align}
\Deltatwonk \left[ \int^\cdot_0 \sigma(\ye_s) dW^H_s \right] &- \Deltatwonk\left[ \bar\sigma W^H \right] = \frac{1}{n^{H-\kappa}}o( \eta^{ \zeta } )\nonumber
\end{align}
in $L^p(\Omega)$ as $\eta\to0$, uniformly in $n\in\mathbb{N}$, uniformly in $k\in\{2, \cdots, n\}$, and uniformly in $\epsilon\in(0,1]$.
\end{lemma}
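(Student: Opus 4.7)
Since $\Deltatwonk[\cdot]$ is the consecutive difference of two first-order filters $\Delta_\nk[\cdot]$, the triangle inequality reduces the statement to bounding in $L^p$ the first-order quantity
$$D_k := \int_\tnkm^\tnk (\sigma(\ye_s) - \bar\sigma)\, dW^H_s$$
uniformly in $n$, $k$ and $\epsilon$, and to showing $\|D_k\|_{L^p} = o(\eta^\zeta)/n^{H-\kappa}$ as $\eta \to 0$. My plan is to use Malliavin calculus to realize $D_k$ as a conditionally Gaussian random variable, thereby reducing its $p$-th moment to a moment of a conditional variance, and then to extract the $\eta$-decay from the ergodicity of the fast process.

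The integrand $u_s := \sigma(\ye_s) - \bar\sigma$ is $\mathcal{F}^B$-measurable (since $\ye$ depends only on $B$), and by independence of $B$ and $W^H$ one has $D^{W^H} u = 0$. Hence the pathwise integral $D_k$ coincides with the divergence (Skorohod) integral $\delta(u \mathbf{1}_{[\tnkm,\tnk]})$ (see Appendix \ref{A:Appendix}), and conditioning on $\mathcal{F}^B$ realizes $D_k$ as a centered Gaussian with random variance
$$V_k = \alpha_H \int_\tnkm^\tnk \int_\tnkm^\tnk u_s u_r |s-r|^{2H-2}\, ds\, dr.$$
Standard Gaussian moment formulas then give $E|D_k|^p = c_p\, E[V_k^{p/2}]$, reducing the problem to estimating $E[V_k^{p/2}]$.

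For $p = 2$ one pulls $E$ into the double integral. Via the Poisson equation $\mathcal{L}\Psi = \sigma - \bar\sigma$ (solvable by the theory recalled just before Theorem \ref{T:LimitBehavior}) together with It\^o's formula applied to $\Psi(\ye_\cdot)$, one obtains the mixing bound $|E[u_s u_r]| \leq C e^{-c|s-r|/\eta}$ modulo boundary transients decaying exponentially in $(s \wedge r)/\eta$; the substitution $v = (s-r)/\eta$ then produces the two-regime estimate $E V_k \leq C \min\{(T/n)^{2H},\, \eta^{2H-1}(T/n)\}$. Splitting into the regimes $n \geq T/\eta$ and $n < T/\eta$ and performing elementary algebra, one verifies that both branches are bounded by $C \eta^{2\zeta}/n^{2(H-\kappa)}$ for the allowed range of $\zeta$ and $\kappa$. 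For general $p \geq 2$, since $E$ no longer commutes with $V_k^{p/2}$, I would extend via the log-convexity of $L^p$-norms: combine the $L^2$-bound above with the rough uniform estimate $\|D_k\|_{L^q} \leq C_q/n^H$ (which follows from the pointwise Cauchy--Schwarz bound $V_k \leq C(T/n)^{2H-1}\int_\tnkm^\tnk |u_s|^2\, ds$ together with the polynomial moment estimates on $\ye$ of Lemma \ref{ymaximal}), tuning $q$ large; the restriction $\zeta < 1/p$ in the statement is the natural margin produced by this interpolation.

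The main technical hurdle is the sharp mixing bound on $E[u_s u_r]$: it must simultaneously capture the rate $1/\eta$ of mixing of $\ye$ and the transient decay coming from the deterministic, non-equilibrium initial condition $\ye_0 = y_0$, uniformly in $k \in \{2, \ldots, n\}$ and in particular in the boundary case $k = 2$ where $\tnkmm = 0$.
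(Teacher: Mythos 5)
Your route is genuinely different from the paper's: the paper disposes of this lemma in a few lines by citing \cite[Lemma 4.17]{hl:averagingdynamics} together with the adaptations spelled out in the proof of \cite[Lemma 7]{fluctuations_fbm_multiscale} (the two adaptations being exactly the ones you flag at the end --- unbounded $\sigma$ and the non-stationary start of $\ye$). Your sketch instead reconstructs the substance of that citation from scratch, and the skeleton is the right one: the reduction to first-order filters; the observation that $u_s=\sigma(\ye_s)-\bar\sigma$ is $\mathcal{F}^B$-measurable, so that $D_k$ is conditionally a Gaussian Wiener integral with random variance $V_k=\|u\chi_{[\tnkm,\tnk]}\|^2_{\mathfrak{H}}$ and hence $E|D_k|^p=c_p\,E[V_k^{p/2}]$; the exponential decorrelation of $\ye$ at scale $\eta$; and the interpolation against the crude bound $V_k\le C(T/n)^{2H-1}\int_{\tnkm}^{\tnk}|u_s|^2\,ds$ to pass from $p=2$ to general $p$, which is indeed where the $1/p$ in the constraint on $\zeta$ comes from. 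Making this mechanism explicit is valuable; what the paper's citation buys is precisely the two technical points you defer (the covariance bound for polynomially growing $\sigma$ and for the deterministic initial condition $y_0$), which in a complete proof would have to be carried out via the Lyapunov structure of Condition \ref{c:recurrence}-$(\alpha,\beta)$ and the Poisson-equation estimates, not merely asserted.

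There is, however, one concrete place where the sketch promises more than the computation delivers: the ``elementary algebra'' closing the $p=2$ case. Your two-regime bound gives $EV_k\le C\min\{(T/n)^{2H},\,\eta^{2H-1}(T/n)\}$, and in the regime $n<T/\eta$ the requirement $\eta^{2H-1}(T/n)\le C\eta^{2\zeta}n^{-2(H-\kappa)}$ reads $\eta^{2H-1-2\zeta}\le C\,n^{1-2H+2\kappa}$; taking $n$ bounded and $\eta\to0$ forces $\zeta\le H-\tfrac12$, a constraint absent from the statement (for $p=2$ the statement allows $\zeta$ up to $\tfrac{\kappa}{2}\wedge\tfrac12$, which exceeds $H-\tfrac12$ when $H$ is close to $\tfrac12$). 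This is not an artifact of your estimate: for a stationary ergodic $\ye$ and a nonconstant $\sigma$ with $\int_0^\infty \cov_\mu\bigl(\sigma(Y_0),\sigma(Y_v)\bigr)v^{2H-2}\,dv>0$ one has, at fixed $\delta=T/n$,
\begin{align*}
E|D_k|^2=\alpha_H\int_{\tnkm}^{\tnk}\int_{\tnkm}^{\tnk}E[u_su_r]\,|s-r|^{2H-2}\,ds\,dr\;\asymp\;\delta\,\eta^{2H-1},
\end{align*}
so $\|D_k\|_{L^2}\asymp\eta^{H-1/2}$ is genuinely not $o(\eta^{\zeta})$ for $\zeta\ge H-\tfrac12$ at fixed $n$. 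You should therefore either impose the additional restriction $\zeta<H-\tfrac12$ at $p=2$ (hence $\zeta<\tfrac2p(H-\tfrac12)$ after interpolation), or prove the estimate only in the coupled regime $\eta=O(n^{-2-\rho_2})$ in which Lemma \ref{L:FilteredDifferencesSquare} actually invokes it. Either repair is harmless downstream, since there the needed conclusion $o(n^{-H})$ follows from $n^{-1/2}\eta^{H-1/2}=o(n^{-H})$; but as written, the claim that ``both branches are bounded by $C\eta^{2\zeta}/n^{2(H-\kappa)}$ for the allowed range of $\zeta$ and $\kappa$'' does not hold uniformly over all $n\in\mathbb{N}$.
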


\begin{proof}[Proof of Lemma \ref{L:FilteredDifferencesDiffusion}]
That the estimate is valid for the first-order filters, i.e., that one has
\begin{align}
\Deltank \left[ \int^\cdot_0 \sigma(\ye_s) dW^H_s \right] &- \Deltank\left[ \bar\sigma W^H \right] = \frac{1}{n^{H-\kappa}}o( \eta^{ \zeta } ) , \nonumber
\end{align}
uniformly in $k\in\{1, \cdots, n\}$, follows almost directly from \cite[Lemma 4.17]{hl:averagingdynamics}, the difference being that in our setting we do not assume that $\sigma$ is uniformly bounded nor that $\ye$ begins at time $t=0$ in stationarity. Nevertheless, as explained in detail in the proof of \cite[Lemma 7]{fluctuations_fbm_multiscale}, the arguments may be carried over. Having established the estimate for the first-order filters $\Deltank$, it is easy to see that it is also valid with second-order filters $\Deltatwonk$ in their stead.
\end{proof}

\begin{lemma}\label{L:FilteredDifferencesSquare}
Assume Condition \ref{c:regularity} and Condition \ref{c:recurrencebasic} and, if $\sigma$ is nonconstant, assume that for some $\alpha > 0$ and $\beta \geq 2$ we also have Condition \ref{c:recurrence}-$(\alpha, \beta)$. Suppose that $n \to \infty$ and $\varepsilon:=(\epsilon,\eta) \to 0$ in such a way that
\begin{enumerate}
	\item there is a $\rho_1 > 0$ such that $\eta^{-1} = O((\epsilon n^{2-2H})^{\rho_1})$;
	\item if $\sigma$ is nonconstant, there is a $\rho_2 > 0$ such that $\eta = O(n^{-2-\rho_2})$.
\end{enumerate}
For any $ 1 \leq \tilde p < \infty $,
\begin{align}
\sup_{2 \leq k \leq n} \left| \left| \frac{1}{\sqrt\epsilon} \Deltatwonk\xe \right|^2-\left|{\bar\sigma} \Deltatwonk W^H \right|^2 \right| &= o(n^{-2H}) \nonumber
\end{align}
in $L^{\tilde p}(\Omega)$. Furthermore, if one has $\eta = O(\epsilon)$, the above can be strengthened to that there is $\rho_3 > 0$ such that
\begin{align}
\sup_{2 \leq k \leq n} \left| \left| \frac{1}{\sqrt\epsilon} \Deltatwonk\xe \right|^2-\left|{\bar\sigma} \Deltatwonk W^H \right|^2 \right| &= O(n^{-2H-\rho_3}) \nonumber
\end{align}
in $L^{\tilde p}(\Omega)$.
\end{lemma}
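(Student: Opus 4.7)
The plan is to expand the difference of squares algebraically and bound each piece using the two preceding lemmas. Set $A_k := \frac{1}{\sqrt\epsilon}\Deltatwonk \xe$ and $B_k := \bar\sigma \Deltatwonk W^H$, and use the elementary identity
\[
|A_k|^2 - |B_k|^2 = |A_k - B_k|^2 + 2\langle A_k - B_k, B_k\rangle.
\]
Combining Lemma \ref{L:FilteredDifferences} (divided through by $\sqrt\epsilon$) with Lemma \ref{L:FilteredDifferencesDiffusion} yields the decomposition
\[
A_k - B_k = \frac{1}{\sqrt\epsilon\, n}\, o(\eta^{-\zeta}) + \frac{1}{n^{H-\kappa}}\, o(\eta^\zeta)
\]
in $L^p(\Omega)$ as $\eta\to0$, uniformly in $k\in\{2,\dots,n\}$, in $n$, and in $\epsilon\in(0,1]$, with free parameters $\zeta,\kappa>0$ (subject to the constraint $0<\zeta<(\kappa/2)\wedge(1/p)$ required by Lemma \ref{L:FilteredDifferencesDiffusion}). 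When $\sigma$ is constant the second summand vanishes identically, which is why Condition \ref{c:recurrence}-$(\alpha,\beta)$ is invoked only in the nonconstant case.

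Next, I would pass from pointwise-in-$k$ $L^p$ bounds to $L^{\tilde p}$-bounds on $\sup_{2\leq k\leq n}$ via the crude union-bound inequality
\[
\bigl\|\sup_{2\leq k\leq n}|X_k|\bigr\|_{L^{\tilde p}} \leq n^{1/p}\sup_{2\leq k\leq n}\|X_k\|_{L^p}\qquad (p\geq \tilde p),
\]
so that choosing $p$ large makes the polynomial loss $n^{1/p}$ as close to $1$ as needed. For $B_k$ specifically, each $\Deltatwonk W^H$ is centered Gaussian with variance of order $n^{-2H}$, so $\|\sup_{2\leq k\leq n}|B_k|\|_{L^p} = O(n^{1/p - H})$. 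Applying Cauchy--Schwarz in $L^{2\tilde p}$ to the cross term and using the decomposition of $A_k - B_k$ then produces a finite list of error contributions of schematic form $n^{1/p}\eta^{-2\zeta}/(\epsilon n^2)$, $n^{1/p}\eta^{2\zeta}/n^{2H-2\kappa}$, and the analogous cross products multiplied by an additional factor $n^{1/(2\tilde p) - H}$ coming from the Gaussian bound on $\sup_k|B_k|$.

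The remaining task is to verify that, under the scaling conditions (1) and (2), each such term is $o(n^{-2H})$. Condition (1), $\eta^{-1} = O((\epsilon n^{2-2H})^{\rho_1})$, together with the hypothesis $\eta\to0$, implicitly forces $\epsilon n^{2-2H}\to\infty$, equivalently $\epsilon^{-1} = o(n^{2-2H})$; this is precisely what is needed to absorb the factor $1/\sqrt\epsilon$ coming from Lemma \ref{L:FilteredDifferences} against the target rate $n^{-2H}$, provided $\zeta$, $\kappa$, and $1/p$ are chosen small enough. Condition (2), $\eta = O(n^{-2-\rho_2})$, controls the $\eta^{2\zeta}/n^{2H-2\kappa}$ contribution in the nonconstant-$\sigma$ case. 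For the strengthened conclusion under $\eta = O(\epsilon)$, the very same decomposition applies but with the little-$o$'s in Lemmas \ref{L:FilteredDifferences}--\ref{L:FilteredDifferencesDiffusion} replaced by explicit polynomial $O$ rates (which the same proofs yield); matching exponents then extracts some strictly positive $\rho_3$.

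The main obstacle is the multi-parameter bookkeeping in the final step: choosing $\zeta,\kappa,p,\tilde p$ simultaneously so that every error term beats $n^{-2H}$, given only the polynomial comparability conditions on $\eta,\epsilon,n$. The delicate observation is the hidden consequence $\epsilon^{-1} = o(n^{2-2H})$ of condition (1), which provides exactly the slack needed to defeat both the $1/\sqrt\epsilon$ prefactor from Lemma \ref{L:FilteredDifferences} and the unavoidable $n^{1/p}$ loss from passing to a supremum over $k$.
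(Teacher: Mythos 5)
Your algebraic setup is exactly the paper's: the identity $|A_k|^2-|B_k|^2=|A_k-B_k|^2+2\langle A_k-B_k,B_k\rangle$, the splitting of $A_k-B_k$ into the drift error (Lemma \ref{L:FilteredDifferences}) and the averaging error (Lemma \ref{L:FilteredDifferencesDiffusion}), and the role of the two scaling conditions all match. The genuine gap is in how you pass from the uniform-in-$k$ $L^p$ estimates to the $L^{\tilde p}$ bound on $\sup_{2\le k\le n}$. Your union bound $\bigl\|\sup_k|X_k|\bigr\|_{L^{\tilde p}}\le n^{1/p}\sup_k\|X_k\|_{L^p}$ is correct, but the claim that the loss $n^{1/p}$ can be absorbed using the ``hidden consequence'' $\epsilon^{-1}=o(n^{2-2H})$ is unfounded: that consequence carries no polynomial rate. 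Condition (1) only bounds $\eta$ from below and forces $\epsilon n^{2-2H}\to\infty$; it is perfectly consistent with, say, $\epsilon n^{2-2H}\sim\log n$ and $\eta\to0$ arbitrarily slowly, in which case the drift-error contribution after the union bound, of order $n^{1/p}(\epsilon n^{2-2H})^{-1/2}\eta^{-\zeta}o(1)$ against the target $n^{-H}$, diverges for every admissible choice of $\zeta,\kappa,p,\tilde p$ --- no amount of bookkeeping closes it, because a fixed positive power of $n$ cannot be beaten by quantities that decay without a rate. A second, quieter instance of the same problem appears in the nonconstant-$\sigma$ term: Lemma \ref{L:FilteredDifferencesDiffusion} forces $\zeta<(\kappa/2)\wedge(1/p)$, and once you must also pay $n^{1/p}$, the exponent inequality $(2+\rho_2)\zeta\ge\kappa+1/p$ can only be met when $\rho_2>1$, whereas the hypothesis grants only $\rho_2>0$.

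The repair is to control the supremum over $k$ directly, with no loss in $n$, which is what the paper's argument tacitly relies on: the proofs of the two auxiliary lemmas actually yield sup-over-$k$ versions. For the drift part this is immediate and pathwise, since
\begin{align*}
\sup_{2\le k\le n}\Bigl|\Deltatwonk\Bigl[\int_0^\cdot c(\xe_s,\ye_s)\,ds\Bigr]\Bigr|\;\le\;\frac{2T}{n}\sup_{0\le t\le T}\bigl|c(\xe_t,\ye_t)\bigr|,
\end{align*}
and the moment bounds on $\sup_t|\xe_t|$ and $\sup_t|\ye_t|$ (Lemma \ref{ymaximal} and \cite[Lemma 2]{fluctuations_fbm_multiscale}) then give $\frac{1}{n}o(\eta^{-\zeta})$ for the supremum itself; similarly the estimate behind Lemma \ref{L:FilteredDifferencesDiffusion} (via \cite[Lemma 4.17]{hl:averagingdynamics}) should be invoked in its uniform form, and $\sup_k|\Deltatwonk W^H|$ costs at most a logarithmic factor, which $o(\cdot)$ absorbs. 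With the suprema handled this way, your parameter choices (e.g.\ $\zeta\le 1/(2\rho_1)$ for the drift piece, and $\kappa,\zeta$ tied to $\rho_2$ as in your sketch) do close, and this is essentially the paper's proof. As written, however, the black-box-plus-union-bound route does not prove the lemma under the stated hypotheses; the treatment of the strengthened $O(n^{-2H-\rho_3})$ claim inherits the same issue, though there both you and the paper are sketchy.
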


\begin{proof}[Proof of Lemma \ref{L:FilteredDifferencesSquare}]
We have for each $n \geq 2$ and $k \in \{2, \cdots, n\}$,
\begin{align}
\left|\frac{1}{\sqrt\epsilon} \Deltatwonk\xe \right|^2-\left|{\bar\sigma} \Deltatwonk W^H \right|^2 &= \langle A_{n,k}, B_{n,k} \rangle,\label{productform}
\end{align}
where
\begin{align*}
A_{n,k} &:= \Deltatwonk \left[ \frac{1}{\sqrt{\epsilon}}\xe - \bar\sigma W^H \right], \hspace{1pc} B_{n,k} := A_{n,k} + 2 \Deltatwonk \left[ \bar\sigma W^H \right],
\end{align*}
and where the angle brackets denote the usual inner product in $\mathbb{R}^m$.

Notice that $ 2 \Deltatwonk \left[ \bar\sigma W^H \right] $ is
$O(n^{-H})$ in $L^{2 \tilde p}(\Omega)$. By H\"older's inequality and
the triangle inequality it therefore suffices to show that $A_{n,k}$
is $o(n^{-H})$ in $L^{2 \tilde p}(\Omega)$. Writing
\begin{align*}
A_{n,k} = \Deltatwonk \left[ \frac{1}{\sqrt{\epsilon}}\xe - \int^\cdot_0 \sigma(\ye_s) dW^H_s \right] + \Deltatwonk \left[ \int^\cdot_0 \sigma(\ye_s) dW^H_s - \bar\sigma W^H \right],
\end{align*}
it suffices by the triangle inequality to check that each summand is
$o(n^{-H})$ in $L^{2 \tilde p}(\Omega)$. For the first summand this is
just Lemma \ref{L:FilteredDifferences} with $ p = 2 \tilde p $ and $
\zeta = 1 / \rho_1 $. If $\sigma$ is constant then for each $y \in \mathcal{Y}$, $\sigma(y) = \bar\sigma$ and one sees that the second summand is exactly $0$. Otherwise, the desired estimate for the second summand follows from Lemma \ref{L:FilteredDifferencesDiffusion} with $ p = 2 \tilde p $, $ \kappa = \tilde p^{-1} $, and $ \zeta = ( \tilde p ( 2 + \rho_2 ) )^{-1}$.
This concludes the proof of the first part of the claim.

The strenghthened estimate may be obtained by observing, firstly, that choosing different conjugate exponents in H\"older's inequality allows one to increase the value of $ \kappa $ in the appeal to Lemma \ref{L:FilteredDifferencesDiffusion}, and secondly, that the additional assumption $\eta = O(\epsilon)$ allows us to reduce the power of $n$ in the assumption $\eta^{-1} = O((\epsilon n^{2-2H})^{\rho_1})$, perhaps with a different choice of $\rho_1$. We omit the details because we do not use the strengthened estimate.

\end{proof}
We are now ready to present the results on the estimation of the Hurst parameter $H$. Theorem \ref{T:ConvergenceForHatH1} presents the asymptotic behavior of an estimator that requires knowledge of the magnitude of the noise $\epsilon>0$. On the other hand, Theorem \ref{T:ConvergenceForHatH2} presents the asymptotic behavior of an estimator that does not require knowledge of the magnitude of the noise $\epsilon>0$.

\begin{theorem}\label{T:ConvergenceForHatH1}
Assume Condition \ref{c:regularity} and Condition \ref{c:recurrencebasic} and, if $\sigma$ is nonconstant, assume that for some $\alpha > 0$ and $\beta \geq 2$ we also have Condition \ref{c:recurrence}-$(\alpha, \beta)$. Suppose that $n \to \infty$ and $\varepsilon:=(\epsilon, \eta) \to 0$ in such a way that
\begin{enumerate}
	\item there is a $\rho_1 > 0$ such that $\eta^{-1} = O((\epsilon n^{2-2H})^{\rho_1})$;
	\item if $\sigma$ is nonconstant, there is a $\rho_2 > 0$ such that $\eta = O(n^{-2-\rho_2})$.
\end{enumerate}
When $n\in\mathbb{N}$ satisfies $n>T$, the function $\phi_{n,T}:[0,1]\to[0,\infty)$ obtained by mapping $x\mapsto\left(\frac{T}{n}\right)^{2x}\left(4-2^{2x}\right)$ is strictly decreasing. Let us write $\phi_{n,T}^{-1}:[0,\infty)\to[0,1]$ for the left inverse of $\phi_{n,T}$ that restricts to an inverse on the actual image $\phi_{n,T}([0,1]) = [0,3]$ and is uniformly zero on $[3,\infty)$. For $n>T$, given a random sample $\{\xe_\tnk\}_{k=0}^{n}$, define the estimate
\begin{align}
\hat{H}^\epsilon_1(\{\xe_\tnk\}_{k=0}^{n})
&:=\phi_{n,T}^{-1}\left(\frac{1}{n\epsilon|\bar\sigma|^2}\sum_{k=2}^{n}\left|\Deltatwonk\xe\right|^{2}\right).\label{Eq:HatHEpsilonOneDefinition}
\end{align}
We have that $\hat{H}^\epsilon_1(\{\xe_\tnk\}_{k=0}^n) \rightarrow H$ in
                                          probability as
                                          $n\rightarrow\infty$ and $\varepsilon :=
                                          (\epsilon, \eta) \to 0$ and
\begin{equation*}
2\sqrt{n}\ln\left(\frac{n}{T}\right)\left(\hat{H}^\epsilon_1(\{\xe_\tnk\}_{k=0}^n)-
  H\right) \rightarrow \mathcal{N}(0,\varsigma^{2}_{\star}(H))
\end{equation*}
in distribution as $n\rightarrow\infty$ and $\varepsilon:=(\epsilon,
\eta) \to 0$, where the variance is given by
\begin{align*}
\varsigma_{\star}^2(H) :=
\varsigma^2_1(H)\left(\frac{1}{|\bar\sigma|^4} \sum_{i,k=1}^m\sum_{j,q=1}^{\tilde{m}}  \bar\sigma_{i, j}
\bar\sigma_{i, q} \bar\sigma_{k, j} \bar\sigma_{k,q}\right)
,
\end{align*}
and $m$ is the dimension of the slow process $X$, $\tilde m$ is the dimension of the noise $W^H$, while
\[
\varsigma^{2}_{1}(H):=2\sum_{j\in\mathbb{Z}}\rho^{2}(j;H), \textrm{ with }\rho(j;H):=\frac{-|j-2|^{2H}+4|j-1|^{2H}-6|j|^{2H}+4|j+1|^{2H}-|j+2|^{2H}}{2(4-2^{2H})}.
\]
\end{theorem}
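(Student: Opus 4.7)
The plan is to reduce the problem to the analysis of a quadratic statistic in fBm second-order increments. First I would set
\[
S_n := \frac{1}{n\epsilon|\bar\sigma|^2}\sum_{k=2}^n |\Deltatwonk\xe|^2,
\]
so that $\hat H_1^\epsilon = \phi_{n,T}^{-1}(S_n)$, and use Lemma~\ref{L:FilteredDifferencesSquare} to replace $S_n$ by the purely fBm-driven statistic
\[
A_n := \frac{1}{n|\bar\sigma|^2}\sum_{k=2}^n \bigl|\bar\sigma\,\Deltatwonk W^H\bigr|^2,
\]
with remainder $|S_n - A_n| = o(n^{-2H})$ in every $L^{\tilde p}(\Omega)$. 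Then I would exploit self-similarity and stationarity of second-order increments of $W^H$ to represent $A_n \stackrel{d}{=} \phi_{n,T}(H)\,T_n$, where
\[
T_n := \frac{1}{n|\bar\sigma|^2}\sum_{k=2}^n |\bar\sigma V_k|^2,\qquad V_k := \frac{W^H_k - 2 W^H_{k-1} + W^H_{k-2}}{\sqrt{4-2^{2H}}},
\]
is built from a centered stationary Gaussian $\R^{\tilde m}$-valued sequence with independent unit-variance components and one-dimensional autocorrelation $\rho(\cdot;H)$.

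For consistency, an Isserlis-theorem calculation (using summability of $\sum_j \rho(j;H)^2$, which is automatic since $\rho(j;H) = O(|j|^{2H-4})$) gives $\E T_n = 1$ and $\var(T_n) = O(1/n)$; combined with the remainder bound this yields $S_n/\phi_{n,T}(H)\to 1$ in probability. Since $\phi_{n,T}$ is strictly decreasing and $\phi_{n,T}(H)$ lies in the interior of its image, the mean-value identity
\[
\hat H_1^\epsilon - H = \frac{S_n - \phi_{n,T}(H)}{\phi'_{n,T}(H^\ast)}
\]
(for some $H^\ast$ between $\hat H_1^\epsilon$ and $H$), combined with the direct computation
\[
\phi'_{n,T}(x) = 2(T/n)^{2x}\bigl[\ln(T/n)(4-2^{2x}) - \ln 2\cdot 2^{2x}\bigr] \sim -2\ln(n/T)\,\phi_{n,T}(x)
\]
(valid uniformly in $x$ on compacts as $n\to\infty$), gives $\hat H_1^\epsilon \to H$ in probability.

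For the central limit theorem, I would multiply the mean-value identity by $2\sqrt n\ln(n/T)$ and use consistency together with the asymptotic for $\phi'_{n,T}$ to obtain
\[
2\sqrt n\ln(n/T)\bigl(\hat H_1^\epsilon - H\bigr) = -\sqrt n\,\frac{S_n - \phi_{n,T}(H)}{\phi_{n,T}(H)}(1+o_p(1)).
\]
Decomposing $S_n - \phi_{n,T}(H) = \phi_{n,T}(H)(T_n - 1) + (S_n - A_n)$, the leading term $\sqrt n(T_n - 1)$ is a sum of degree-two Hermite polynomials of a stationary Gaussian sequence (a second Wiener chaos element), so I would invoke the Breuer--Major / fourth-moment theorem to obtain the Gaussian limit. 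A direct Isserlis expansion
\[
\cov\bigl(|\bar\sigma V_k|^2, |\bar\sigma V_\ell|^2\bigr) = 2\rho(k-\ell;H)^2 \sum_{i,k'=1}^m \sum_{j,q=1}^{\tilde m} \bar\sigma_{i,j}\bar\sigma_{i,q}\bar\sigma_{k',j}\bar\sigma_{k',q}
\]
then yields, upon summation over $k,\ell\in\{2,\ldots,n\}$ and division by $n$, exactly the limiting variance $\varsigma^2_\star(H)$.

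The hard part is showing that the remainder $\sqrt n(S_n - A_n)/\phi_{n,T}(H)$ is negligible: Lemma~\ref{L:FilteredDifferencesSquare} alone yields only $o(\sqrt n)$, which is insufficient for Slutsky. I would therefore revisit the proofs of Lemmas~\ref{L:FilteredDifferences} and~\ref{L:FilteredDifferencesDiffusion}, tuning the free exponents $\zeta$ and $\kappa$ close to the boundaries permitted by the standing hypotheses $\eta^{-1} = O((\epsilon n^{2-2H})^{\rho_1})$ and $\eta = O(n^{-2-\rho_2})$ to extract a polynomial surplus $O(n^{-2H-c})$ with $c > 1/2$ in a sufficiently high $L^{\tilde p}$; Cauchy--Schwarz applied to the product representation $\langle A_{n,k}, B_{n,k}\rangle$ from the proof of Lemma~\ref{L:FilteredDifferencesSquare} should then close the argument.
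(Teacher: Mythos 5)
Your proposal follows essentially the same route as the paper: replace the data-driven statistic by the pure-fBm quadratic variation via Lemma \ref{L:FilteredDifferencesSquare}, prove consistency and a second-chaos central limit theorem for the latter, and transfer the result to $\hat{H}^\epsilon_1$ through the monotone map $\phi_{n,T}$. The differences are mostly cosmetic: you treat the statistic as a quadratic functional of the stationary normalized sequence $V_k$ and invoke Isserlis plus Breuer--Major, whereas Lemma \ref{L:SpecialConvergence} splits it into diagonal terms $V^{j,j}_{n,T}$ (handled by citing one-dimensional results) and off-diagonal terms $V^{j,q}_{n,T}$ (handled by the fourth-moment theorem); your covariance computation reproduces $\varsigma^2_\star(H)$ exactly. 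Your mean-value argument with the explicit asymptotics of $\phi'_{n,T}$ is a self-contained substitute for the paper's appeal to \cite{kubiliusskorniakov,kubiliusskorniakovMelichov}; to use it at the CLT scale you do need the bootstrap $(\hat{H}^\epsilon_1-H)\ln n\to0$ in probability in order to replace $\phi'_{n,T}(H^\ast)$ by $\phi'_{n,T}(H)$, but this follows from the $O(n^{-1/2})$ rate (in probability) of the ratio statistic and is exactly what the cited references carry out.

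The step you single out as hard is indeed the delicate point, and your diagnosis is accurate: Lemma \ref{L:FilteredDifferencesSquare} gives $S_n-A_n=o(n^{-2H})$, which after dividing by $\phi_{n,T}(H)\asymp n^{-2H}$ and multiplying by $\sqrt{n}$ leaves an uncontrolled $o(\sqrt{n})$ term. You should be aware that the paper's own proof does not resolve this either: it records the remainder as $+\,o(1)$ after normalization and then invokes Lemma \ref{L:SpecialConvergenceForHatH1} for both conclusions, which suffices for consistency but not, as written, for the $\sqrt{n}$-normalized limit. Your proposed repair (retuning $\zeta$ and $\kappa$ in Lemmas \ref{L:FilteredDifferences} and \ref{L:FilteredDifferencesDiffusion}) is the natural move, but the standing hypotheses alone do not obviously deliver a surplus of $n^{-1/2}$: condition (1) only forces $\epsilon n^{2-2H}\to\infty$ at an unspecified rate, and when $\sigma$ is constant there is no condition (2), so $\eta$ may tend to zero arbitrarily slowly relative to $n$ and the factors $o(\eta^{\pm\zeta})$ cannot be converted into negative powers of $n$. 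Note that the ``strengthened estimate'' in Lemma \ref{L:FilteredDifferencesSquare} requires the extra assumption $\eta=O(\epsilon)$ and still yields only an unquantified exponent $\rho_3>0$. So closing this step appears to require either additional rate assumptions relating $\epsilon,\eta,n$, or genuine cancellation in $\sum_k\langle A_{n,k},B_{n,k}\rangle$ beyond the bound by $n$ times the supremum. Apart from this loose end, which your write-up shares with (and in fact makes more visible than) the paper's, the argument is sound.
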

\begin{remark}
Note that for any value of $H$, $\rho(j;H)$ is symmetric in $j$ and $\rho^2(0;H) = 1$, and so one may also write $\varsigma^{2}_{1}(H)=2\left(1+2\sum_{j=1}^{\infty}\rho^{2}(j;H)\right)$ as is sometimes done elsewhere in the literature, e.g. \cite{kubiliusskorniakovMelichov}.
\end{remark}
\begin{remark}\label{R:phi}
Note that $\phi^{-1}_{n,T}$ is a one-sided inverse only. Namely, $\phi^{-1}_{n,T}\circ\phi_{n,T}:[0,1]\to[0,1]$ is the identity map while $\phi_{n,T}\circ\phi^{-1}_{n,T}:[0,\infty)\to[0,\infty)$ maps $x\mapsto\min\{x,3\}$. The reason for this choice of domain for $\phi^{-1}_{n,T}$ is of course to ensure that the estimates are always defined. We will see that with probability approaching $1$ in the limit, one in fact lands in the invertible range.
\end{remark}
The idea is to use the approximation presented in Lemma \ref{L:FilteredDifferencesSquare}. Therefore, let us proceed as follows: we will first verify in Lemma \ref{L:SpecialConvergence} a basic convergence statement, then deduce in Lemma \ref{L:SpecialConvergenceForHatH1} consistency and asymptotic normality for the ideal case in which the data is sampled not from $\xe$ but rather from $\sqrt\epsilon\bar\sigma W^H$, and finally combine this with Lemma \ref{L:FilteredDifferencesSquare} to obtain Theorem \ref{T:ConvergenceForHatH1}.

In accordance with the above plan, let us start with the basic convergence statement. Note that there is no $\varepsilon$ here and the asymptotic regime of interest is simply $n\to\infty$.

\begin{lemma}\label{L:SpecialConvergence}
With notation as in the statement of Theorem \ref{T:ConvergenceForHatH1}, we have that
\begin{equation*}
\frac{n^{2H-1}}{T^{2H}(4-2^{2H})}\frac{1}{|\bar\sigma|^2}\sum^n_{k=2}\left|\bar\sigma
  \Deltatwonk W^H \right|^{2} \rightarrow 1 \textrm{ in probability
                   as } n\to\infty
\end{equation*}
and
\begin{equation*}
\sqrt{n}\left(\frac{n^{2H-1}}{T^{2H}(4-2^{2H})}\frac{1}{|\bar\sigma|^2}\sum^n_{k=2}\left|\bar\sigma \Deltatwonk W^H \right|^{2}-1\right)\rightarrow \mathcal{N}(0,\varsigma^{2}_{\star}(H)) \textrm{ in distribution as } n\to\infty.
\end{equation*}
\end{lemma}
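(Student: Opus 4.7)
The plan is to reduce both assertions to standard limit theorems for quadratic functionals of a centered stationary Gaussian sequence. For each coordinate $j=1,\ldots,\tilde m$ I introduce the normalization $\eta^{j}_{n,k}:=(T/n)^{-H}(4-2^{2H})^{-1/2}\Deltatwonk W^{H,j}$. By the self-similar stationary-increment structure of fBm and a direct calculation with the second-order difference kernel, $(\eta^{j}_{n,k})_{k\geq 2}$ is a centered unit-variance stationary Gaussian sequence with autocovariance $\mathrm{Cov}(\eta^{j}_{n,0},\eta^{j}_{n,l})=\rho(l;H)$, and different coordinates are independent. Setting $a_{jq}:=\sum_{i=1}^{m}\bar\sigma_{ij}\bar\sigma_{iq}$ and $Z_{k}:=|\bar\sigma|^{-2}\sum_{j,q=1}^{\tilde m}a_{jq}\eta^{j}_{n,k}\eta^{q}_{n,k}$ one has
\begin{equation*}
\frac{n^{2H-1}}{T^{2H}(4-2^{2H})}\cdot\frac{1}{|\bar\sigma|^{2}}\sum_{k=2}^{n}\left|\bar\sigma\Deltatwonk W^{H}\right|^{2}=\frac{1}{n}\sum_{k=2}^{n}Z_{k},
\end{equation*}
together with $\Ex{Z_{k}}=|\bar\sigma|^{-2}\sum_{j}a_{jj}=1$, so the normalized sum has mean $(n-1)/n$.

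The first assertion now follows from an $L^{2}$ law of large numbers. Indeed, Isserlis' theorem applied to the jointly Gaussian, coordinate-wise independent family $\{\eta^{j}\}_{j}$ yields $\mathrm{Cov}(\eta^{j}_{n,0}\eta^{q}_{n,0},\eta^{j'}_{n,l}\eta^{q'}_{n,l})=(\delta_{jj'}\delta_{qq'}+\delta_{jq'}\delta_{qj'})\rho(l;H)^{2}$, whence $|\mathrm{Cov}(Z_{0},Z_{l})|\leq C\rho(l;H)^{2}$. The cancellations in the second-order filter force $\rho(l;H)=O(|l|^{2H-4})$ as $|l|\to\infty$, so $\sum_{l\in\mathbb{Z}}\rho(l;H)^{2}<\infty$ for every $H\in(0,1)$. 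Consequently $\mathrm{Var}(n^{-1}\sum_{k=2}^{n}Z_{k})=O(n^{-1})$, and Chebyshev's inequality closes the argument.

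For the CLT, observe that $Z_{k}-1$ is a second-Wiener-chaos random variable in the isonormal Gaussian space generated by $\{\eta^{j}\}_{j}$, of Hermite rank exactly $2$. Since the squared autocovariance is summable, the (multidimensional) Breuer--Major theorem applies and gives $n^{-1/2}\sum_{k=2}^{n}(Z_{k}-1)\to\mathcal{N}(0,\varsigma^{2})$ in distribution, with $\varsigma^{2}:=\sum_{l\in\mathbb{Z}}\mathrm{Cov}(Z_{0},Z_{l})$. Summing the Isserlis identity over $l$, using $\sum_{l}\rho(l;H)^{2}=\varsigma^{2}_{1}(H)/2$ and the symmetry $a_{jq}=a_{qj}$, one recovers
\begin{equation*}
\varsigma^{2}=\frac{\varsigma^{2}_{1}(H)}{|\bar\sigma|^{4}}\sum_{j,q=1}^{\tilde m}a_{jq}^{2}=\frac{\varsigma^{2}_{1}(H)}{|\bar\sigma|^{4}}\sum_{i,k=1}^{m}\sum_{j,q=1}^{\tilde m}\bar\sigma_{ij}\bar\sigma_{iq}\bar\sigma_{kj}\bar\sigma_{kq}=\varsigma^{2}_{\star}(H).
\end{equation*}
The deterministic bias $\sqrt n((n-1)/n-1)=-1/\sqrt n\to 0$ is absorbed by Slutsky's lemma.

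The only genuinely non-routine ingredient is verifying the Breuer--Major hypothesis in the multivariate coordinate-wise-independent setting required here, and carefully propagating the matrix $\bar\sigma$ through the Wick/Isserlis expansion to recover the explicit asymptotic variance $\varsigma^{2}_{\star}(H)$; both are essentially bookkeeping once the normalization in the first step is in place.
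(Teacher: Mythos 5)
Your proof is correct, and it reaches the conclusion by a genuinely different route than the paper. You normalize the filtered increments once and for all (correctly noting that, by self-similarity, the law of the normalized sequence and its autocovariance $\rho(\cdot;H)$ do not depend on $n$, so no triangular-array issue arises), rewrite the whole statistic as $\frac1n\sum_k Z_k$ for a single quadratic functional $Z_k$ of a stationary Gaussian vector sequence, and then get the LLN from a Chebyshev bound (using square-summability of $\rho$, valid for all $H$ thanks to the second-order filter) and the CLT from the multivariate Breuer--Major/Arcones theorem, with the limiting variance computed in one pass via Isserlis summation. The paper instead splits the statistic into a diagonal part $A_{n,T}$, handled by citing the known one-dimensional results for $V^{j,j}_{n,T}$ together with the Cram\'er--Wold device, and an off-diagonal part $B_{n,T}$, handled by an explicit second-chaos representation $I_2(\xi_{n,T})$ and the fourth-moment theorem via a contraction-norm estimate, followed by a check that $A_{n,T}$ and $B_{n,T}$ are uncorrelated. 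Your unified treatment buys a cleaner variance bookkeeping (the diagonal/off-diagonal cross terms vanish automatically in the Wick expansion, and your $\varsigma^2=\varsigma^2_\star(H)$ matches the paper's formula exactly); the price is that you outsource the central limit step to Breuer--Major, whose standard proofs rest on essentially the same chaos/fourth-moment machinery the paper deploys directly, so the two arguments are of comparable depth. The only refinement I would ask for is to state explicitly that the Hermite rank of $Z_k-1$ is exactly $2$ (its first-chaos projection vanishes because Gaussian third moments vanish) and that the relevant Breuer--Major hypothesis is summability of the squared entries of the matrix covariance function, both of which you have effectively verified.
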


\begin{proof}[Proof of Lemma \ref{L:SpecialConvergence}]
Recall that $m$ is the dimension of the Euclidean space in which the
slow process evolves. Let us write $\tilde m$ for the dimension of
$W^H$ and denote by $\{W^{H,j}\}^{\tilde m}_{j=1}$ the independent
components of $W^H$. For each pair $1\leq j, q \leq \tilde m$ define
\begin{equation*}
V^{j,q}_{n,T} := \frac{n^{2H-1}}{T^{2H}(4-2^{2H})}\sum^n_{k=2}\Deltatwonk W^{H,j}\Deltatwonk W^{H,q}.
\end{equation*}
With this notation we can write
\begin{equation}
\sqrt n \left(\frac{n^{2H-1}}{T^{2H}(4-2^{2H})}\frac{1}{|\bar\sigma|^2}\sum^n_{k=2}\left|\bar\sigma \Deltatwonk W^H \right|^{2} - 1\right) \label{Eq:EquationOne}
= A_{n,T} + B_{n,T},
\end{equation}
where
\begin{equation*}
A_{n,T} := \sqrt n
\left(\frac{1}{|\bar\sigma|^2}\sum^{m}_{i=1}\sum^{\tilde
    m}_{j=1}\bar\sigma^{2}_{i,j}V^{j,j}_{n,T} -1 \right) \quad\mbox{and}\quad
B_{n,T} := \sqrt n \left(\frac{1}{|\bar\sigma|^2}\sum^{m}_{i=1}\sum_{j\neq q}\bar\sigma_{i,j}\bar\sigma_{i,q}V^{j,q}_{n,T}\right).
\end{equation*}
Each of these belongs, at least asymptotically, to the second chaos of
the isonormal Gaussian process associated with $W^H$, and so it will
be enough to understand their limits separately. Let us start with
$A_{n,T}$. By known results in dimension one (see for example the references \cite{Coeurjolly2001, istaslang} and others in the introduction), for each $j$, we have that as $n\to\infty$,
\begin{equation}
  V_{n,T}^{j,j} \xrightarrow {a.s.} 1,\label{Eq:VAlmostSureLimit}
\end{equation}
and
\begin{equation}
\sqrt{n}\left( V_{n,T}^{j,j} -1 \right) \xrightarrow {\mathscr{D}} \mathcal{N}\left( 0,\varsigma_1^2(H) \right).\label{Eq:VDistributionLimit}
\end{equation}
Independence of $\{W^{H,j}\}^{\tilde m}_{j=1}$ implies independence of $\{V^{j,j}_{n,T}\}^{\tilde m}_{j=1}$, so that
\begin{align*}
\sqrt{n}\begin{pmatrix}
V_{n,T}^{1,1} -1 \\
\vdots \\
V_{n,T}^{\tilde m,\tilde m} -1
\end{pmatrix} \xrightarrow{\mathscr{D}} \mathcal{N} \left(\begin{pmatrix}
0 \\
\vdots \\
0
\end{pmatrix},   \begin{pmatrix}
   \varsigma_1^2(H)  & & \\
    & \ddots & \\
    & & \varsigma_1^2(H)
  \end{pmatrix}  \right).
\end{align*}
By the Cram\'er-Wold theorem (see \cite[Corollary 4.5]{kallenbergbook}), the known convergence (\ref{Eq:VDistributionLimit}), and the algebraic identity
\begin{align*}
A_{n,T} := \sqrt{n}\left(
  \frac{1}{|\bar\sigma|^2}\sum_{i=1}^{m}\sum_{j=1}^{\tilde m}\bar{\sigma}^{2}_{i,j}V_{n,T}^{j,j}
  -1\right) =
  \frac{1}{|\bar\sigma|^2}\sum_{j=1}^{\tilde m} \left(\sum_{i=1}^{m}
    \bar{\sigma}^{2}_{i,j}  \right)\sqrt{n} \left(V_{n,T}^{j,j} -1  \right),
\end{align*}
one then obtains
\begin{align}
A_{n,T} \xrightarrow {\mathscr{D}} \mathcal{N}\left(
  0,\frac{\varsigma_1^2(H)}{|\bar\sigma|^4}\sum_{i,k=1}^m \sum_{j=1}^{\tilde m} \bar{\sigma}^{2}_{i,j}\bar{\sigma}^{2}_{k,j}  \right). \label{Eq:EquationTwo}
\end{align}
Let us now show that $B_{n,T}$ also converges to a centered Gaussian
distribution as $n \to \infty$. It is straightforward to check that for each $1 \leq j \leq \tilde m$ and $2 \leq k \leq n$, one has in distribution
\begin{align*}
\Deltatwonk W^{H,j} &= I_1^{H,j}(h_k^n),
\end{align*}
where $I_1^{H,j}$ denotes the Wiener integral of order one with
respect to the fractional Brownian motion $W^{H,j}$ (see Appendix
\ref{SS:MultipleWienerIntergation} for an introduction and statement of key properties), and the function $h_k^n$ is given by $h_k^n =
\chi_{[t_{k-1}^n, t_k^n]} - \chi_{[t_{k-2}^n,
  t_{k-1}^n]}$, with $\norm{h_k^n}_{\frak{H}} =
\frac{T^H\sqrt{4-2^{2H}}}{n^H}$. Now, for any $1 \leq j \neq q \leq
\tilde m$ and $2 \leq k \leq n$, we have that $\left(\Deltatwonk
  W^{H,j},\Deltatwonk W^{H,q}  \right)$ is equal in distribution to
$\left( I_1(h_k^n),I_1(g_k^n) \right)$, where $I_1$ denotes a Wiener
integral of order one with respect to a generic fractional Brownian
motion with the same Hurst parameter $H$ as $W^H$, and where $g_k^n$ is a
function such that $\left\langle h_k^n,g_k^n \right\rangle_{\frak{H}}
= 0$ and $\norm{g_k^n}_{\frak{H}} = \norm{h_k^n}_{\frak{H}}$. This is explained in the next remark.
\begin{remark}
As the above equalities are only stated in distribution, we can
replace the Wiener integrals of the same function $h_k^n$ with respect
to the two independent fractional Brownian motions $W^{H,j}$ and
$W^{H,q}$ by Wiener integrals with respect to the same (generic) fractional
Brownian motion, but of orthogonal functions with the same norms, as
here orthogonality is a characterization of independence (see for
instance \cite[Proposition 1]{ustuzakai}). Hence, the
vectors $\left(I_1^{H,j}(h_k^n), I_1^{H,q}(h_k^n) \right)$ and $\left( I_1(h_k^n),I_1(g_k^n) \right)$ are equal in distribution.
\end{remark}
Based on this observation, we have that
$\sqrt{n}V_{n,T}^{j,q}$ has the same law as
\begin{equation*}
\frac{n^{2H-1}}{T^{2H}(4-2^{2H})}\sum^n_{k=2}I_1(h_k^n)I_1(g_k^n) =
\frac{n^{2H-1}}{T^{2H}(4-2^{2H})}\sum^n_{k=2}I_2(h_k^n
\widetilde{\otimes}g_k^n) = I_2 \left( \frac{n^{2H -
      \frac{1}{2}}}{T^{2H}(4-2^{2H})}\sum_{k=2}^n h_k^n
\widetilde{\otimes}g_k^n  \right),
\end{equation*}
where the first equality comes from applying the product rule for
Wiener integrals given in \eqref{multwienerintproductrule} together with the orthogonality in $\frak{H}$ of $h_k^n$ and $g_k^n$. Define
\begin{equation*}
\xi_{n,T} = \frac{n^{2H -
      \frac{1}{2}}}{T^{2H}(4-2^{2H})}\sum_{k=2}^n h_k^n
\widetilde{\otimes}g_k^n.
\end{equation*}
It is then straightforward to check that
\begin{equation*}
\var{\left(\sqrt{n}V_{n,T}^{j,q}  \right)} = \var{\left(I_2(\xi_{n,T})
  \right)} = 2 \norm{\xi_{n,T}}_{\frak{H}^{\otimes 2}}^2 \to \varsigma_1^2(H)
\end{equation*}
as $n \to \infty$. We will show that $\sqrt{n}V_{n,T}^{j,q} $ converges to a centered
Gaussian distribution with variance given by
$\varsigma_1^2(H)$. According to \cite[Theorem 5.2.7]{noupecbook}, all we need to prove is that the
contraction norm
$\norm{\xi_{n,T} \otimes_1 \xi_{n,T}}_{\frak{H}^{\otimes 2}} \to 0$
as $n \to \infty$ (see Appendix
\ref{SS:MultipleWienerIntergation} for a definition of contractions). We have
\begin{align*}
\xi_{n,T} \otimes_1 \xi_{n,T} &= \frac{n^{4H -
   1}}{T^{2H}(4-2^{2H})^2}\sum_{k_1,k_2=2}^n \left( h_{k_1}^n
\widetilde{\otimes}g_{k_1}^n \right) \otimes_1 \left( h_{k_2}^n
                                \widetilde{\otimes}g_{k_2}^n \right)\\
  &= \frac{n^{4H -
   1}}{T^{2H}(4-2^{2H})^2}\sum_{k=2}^n \left( h_{k}^n
\widetilde{\otimes}g_{k}^n \right) \otimes_1 \left( h_{k}^n
                                \widetilde{\otimes}g_{k}^n \right) + \frac{2n^{4H -
   1}}{T^{2H}(4-2^{2H})^2}\sum_{k=2}^n \left( h_{k}^n
\widetilde{\otimes}g_{k}^n \right) \otimes_1 \left( h_{k+1}^n
                                \widetilde{\otimes}g_{k+1}^n \right),
\end{align*}
where the reduction to only one sum comes from the fact that if $k_1$
and $k_2$ are strictly more than one appart, the resulting functions
are orthogonal. Repeating this argument when taking the norm in
$\frak{H}^{\otimes 2}$ of $\xi_{n,T} \otimes_1 \xi_{n,T}$ yields, with $C >0$ denoting a generic constant,
\begin{align*}
\norm{\xi_{n,T} \otimes_1 \xi_{n,T}}_{\frak{H}^{\otimes 2}}^2 \leq
  \frac{C n^{8H-1}}{T^{8H}(4-2^{2H})^4} \norm{h_k^n}_{\frak{H}}^8 =
  \frac{C}{n} \to 0
\end{align*}
as $n \to \infty$. Now, note that for any $j_1,j_2,q_1,q_2$ such that
$j_1 \neq j_2$, $j_1 < q_1$ and $j_2 < q_2$, it holds by the independence of $W^{H,j}$ and $W^{H,q}$ for $j \neq q$ that
\begin{equation*}
E \left( n V_{n,T}^{j_1,q_1} V_{n,T}^{j_2,q_2} \right) =0,
\end{equation*}
so that
all of the summands in $B_{n,T}$ are uncorrelated, and hence
asymptotically independent. We can therefore conclude that
\begin{equation}
  \label{Eq:EquationThree}
B_{n,T} \xrightarrow {\mathscr{D}} \mathcal{N} \left( 0, \frac{2\varsigma^2_1(H)}{|\bar\sigma|^4}  \sum_{j<q} \left(\sum^m_{i=1}\bar\sigma_{i,j}\bar\sigma_{i,q}\right)^2  \right).
\end{equation}
Taking together (\ref{Eq:EquationOne}), (\ref{Eq:EquationTwo}), and (\ref{Eq:EquationThree}), we conclude that
\begin{align*}
\sqrt n \left(\frac{n^{2H-1}}{T^{2H}(4-2^{2H})|\bar\sigma|^2}\sum^n_{k=2}\left|\bar\sigma \Deltatwonk W^H \right|^{2} - 1\right)
\end{align*}
converges to a centered Gaussian distribution with variance
\begin{align*}
\frac{\varsigma^2_1(H)}{|\bar\sigma|^4}\sum_{i,k=1}^m\sum_{j=1}^{\tilde m}
\bar{\sigma}^{2}_{i,j}\bar{\sigma}^{2}_{k,j}  + \frac{2\varsigma^2_1(H)}{|\bar\sigma|^4}  \sum_{j<q} \left(\sum^m_{i=1}\bar\sigma_{i,j}\bar\sigma_{i,q}\right)^2
 + 2 \lim_{n \to \infty} \cov \left(A_{n,T} ,B_{n,T} \right).
\end{align*}
The first two summands are precisely $\varsigma_{\star}^2(H)$, and it
therefore remains only to show that $A_{n,T}$ and $B_{n,T}$ are, at
least asymptotically, uncorrelated. To demonstrate this, observe that
$E (B_{n,T})=0$ (in comparison, $E(A_{n,T})=-\frac{1}{\sqrt{n}}$), and as a consequence,
\begin{align*}
\cov \left(A_{n,T}, B_{n,T}\right) &= E\left(A_{n,T}B_{n,T}\right) \nonumber\\
&= E\left(\left(A_{n,T}+\sqrt{n}\right)B_{n,T}\right) \nonumber\\
&= \frac{n}{|\bar\sigma|^4} E \left(\left(\sum^{m}_{i=1}\sum^{\tilde m}_{j=1}\bar\sigma^{2}_{i,j}V^{j,j}_{n,T}\right)\left(\sum^{m}_{i=1}\sum_{j\neq q}\bar\sigma_{i,j}\bar\sigma_{i,q}V^{j,q}_{n,T}\right)\right) \nonumber\\
&= 0,
\end{align*}
since each summand in the product (recall that the $V$ terms are
themselves sums) includes a filtered observation of a component of
$W^H$ that is independent from the others in that summand. This concludes the proof.
\end{proof}

We now apply Lemma \ref{L:SpecialConvergence} to deduce an idealized version of Theorem \ref{T:ConvergenceForHatH1} which we state in Lemma \ref{L:SpecialConvergenceForHatH1}; namely, we imagine that the data is sampled not from $\xe$ but rather from $\sqrt\epsilon\bar\sigma W^H$. Note that the $\epsilon$-dependence in the estimator and in the ideal data compensate one another and the asymptotic regime of interest is therefore once again simply $n\to\infty$.

\begin{lemma}\label{L:SpecialConvergenceForHatH1}
With notation as in the statement of Theorem \ref{T:ConvergenceForHatH1}, we have that
\begin{align*}
\hat{H}^\epsilon_1(\{\sqrt\epsilon\bar\sigma W^H_\tnk\}_{k=0}^n)&\rightarrow H \textrm{ in probability as } n\rightarrow\infty \textrm{ and } \nonumber\\
2\sqrt{n}\ln\left(\frac{n}{T}\right)\left(\hat{H}^\epsilon_1(\{\sqrt\epsilon\bar\sigma W^H_\tnk\}_{k=0}^n)- H\right)&\rightarrow \mathcal{N}(0,\varsigma^{2}_{\star}(H)) \textrm{ in distribution as }n\rightarrow\infty.
\end{align*}
Note that the value of $\hat{H}^\epsilon_1(\{\sqrt\epsilon\bar\sigma W^H_\tnk\}_{k=0}^n)$ does not depend on $\epsilon$ because the dependence in the estimator and the dependence in the ideal sampled data compensate one another.
\end{lemma}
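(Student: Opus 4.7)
The plan is to invoke Lemma \ref{L:SpecialConvergence} and combine it with a straightforward (but $n$-dependent) linearization of the implicit equation defining $\hat{H}^\epsilon_1$. First, observe that since $\Deltatwonk[\sqrt{\epsilon}\bar\sigma W^H] = \sqrt{\epsilon}\bar\sigma \Deltatwonk W^H$, plugging the idealized data into the estimator gives
\begin{equation*}
\hat{H}^\epsilon_1(\{\sqrt{\epsilon}\bar\sigma W^H_\tnk\}_{k=0}^n) = \phi_{n,T}^{-1}(S_n), \quad \text{where}\quad S_n := \frac{1}{n|\bar\sigma|^2}\sum_{k=2}^n \left|\bar\sigma \Deltatwonk W^H\right|^2,
\end{equation*}
so the $\epsilon$ cancels, as claimed. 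Setting $U_n := S_n/\phi_{n,T}(H)$, Lemma \ref{L:SpecialConvergence} gives $U_n \to 1$ in probability and $\sqrt{n}(U_n - 1) \to \mathcal{N}(0, \varsigma_\star^2(H))$ in distribution as $n \to \infty$.

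Since $\phi_{n,T}(H) = (T/n)^{2H}(4 - 2^{2H}) \in (0, 3)$ for $n > T$ and $H \in (1/2, 1)$, $U_n \to 1$ implies that $S_n$ lies in the invertible range $\phi_{n,T}([0,1]) = [0,3]$ with probability tending to $1$; on that event the estimate $\hat H := \hat H_1^\epsilon$ genuinely satisfies $\phi_{n,T}(\hat H) = S_n$. Taking logarithms of this identity and subtracting $\ln \phi_{n,T}(H) = 2H\ln(T/n) + \ln(4-2^{2H})$, the mean value theorem yields
\begin{equation*}
(\hat H - H)\bigl[2\ln(T/n) + f'(\xi_n)\bigr] = \ln U_n,
\end{equation*}
for some $\xi_n$ between $\hat H$ and $H$, where $f(x) := \ln(4-2^{2x})$. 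Since $f'$ is bounded on any compact subinterval of $[0,1)$, $2\ln(T/n) \to -\infty$, and $\ln U_n \to 0$ in probability, this immediately yields $\hat H \to H$ in probability. (Using $H < 1$, $\xi_n$ eventually stays in a compact subinterval of $[0,1)$ with high probability, so $f'(\xi_n)$ is bounded on this event.)

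For the second assertion, multiplying the displayed identity by $2\sqrt{n}\ln(n/T)$ and rearranging gives
\begin{equation*}
2\sqrt{n}\ln(n/T)(\hat H - H) = \frac{-\sqrt{n}\ln U_n}{1 - f'(\xi_n)/(2\ln(n/T))}.
\end{equation*}
The denominator tends to $1$ in probability. For the numerator, expand $\ln U_n = (U_n - 1) + O((U_n-1)^2)$; since $\sqrt{n}(U_n - 1) = O_P(1)$ by Lemma \ref{L:SpecialConvergence}, the quadratic remainder contributes $O_P(n^{-1/2}) = o_P(1)$, so $-\sqrt{n}\ln U_n$ has the same limit distribution as $-\sqrt{n}(U_n - 1)$, namely $\mathcal{N}(0, \varsigma_\star^2(H))$ (by symmetry). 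Slutsky's theorem concludes the argument.

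The whole calculation is essentially the delta method applied to $\phi_{n,T}^{-1}$; the only subtlety is that the function being inverted itself depends on $n$. Working with the logarithmic form of the defining equation rather than Taylor expanding $\phi_{n,T}^{-1}$ directly makes the scaling factor $2\ln(n/T)$ appear transparently and handles the $n$-dependence cleanly, so no additional estimate beyond Lemma \ref{L:SpecialConvergence} is needed.
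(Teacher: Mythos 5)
Your argument follows essentially the same route as the paper's: both reduce the statement to Lemma \ref{L:SpecialConvergence} after checking that, with probability tending to one, the argument fed to $\phi_{n,T}^{-1}$ lands in the range $[0,3]$ on which it is a genuine inverse (the paper checks this via the almost sure convergence $\frac1n\sum_{k}|\Deltatwonk W^{H,j}|^2\to0$; your observation that $S_n=U_n\,\phi_{n,T}(H)\to0$ in probability serves the same purpose). The one substantive difference is that the paper delegates the passage from the convergence of $\phi_{n,T}(\hat H^\epsilon_1)/\phi_{n,T}(H)$ to the convergence of $\hat H^\epsilon_1$ itself to ``reasoning as in'' \cite{kubiliusskorniakov,kubiliusskorniakovMelichov}, whereas you carry out that transfer explicitly by taking logarithms; this makes the scaling factor $2\sqrt n\ln(n/T)$ appear transparently and is a useful addition.

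One step in your transfer is asserted rather than proved, and as written it is close to circular: to conclude consistency you need $f'(\xi_n)$ bounded, i.e.\ $\xi_n$ (hence $\hat H^\epsilon_1$) bounded away from $1$ with high probability, but ``eventually stays in a compact subinterval of $[0,1)$'' is itself a weak consistency statement and cannot simply be invoked at that point. It is true and easily fixed using the strict monotonicity of $\phi_{n,T}$: for $0<\delta<1-H$, on the invertibility event one has $\{\hat H^\epsilon_1\ge 1-\delta\}=\{S_n\le\phi_{n,T}(1-\delta)\}=\{U_n\le\phi_{n,T}(1-\delta)/\phi_{n,T}(H)\}$, and $\phi_{n,T}(1-\delta)/\phi_{n,T}(H)=(T/n)^{2(1-\delta-H)}\frac{4-2^{2-2\delta}}{4-2^{2H}}\to0$, so this event has vanishing probability because $U_n\to1$ in probability. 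Cleaner still: prove consistency first, with no mean value theorem at all, by applying the same monotonicity argument to $\{\hat H^\epsilon_1>H+\delta\}$ and $\{\hat H^\epsilon_1<H-\delta\}$ (the ratios $\phi_{n,T}(H\pm\delta)/\phi_{n,T}(H)$ tend to $0$ and to $\infty$, respectively); then in the asymptotic normality step $\xi_n\to H$ in probability, so $f'(\xi_n)$ is bounded (indeed converges to $f'(H)$) on an event of probability tending to one, and the rest of your argument---$\sqrt n\ln U_n=\sqrt n(U_n-1)+o_P(1)$, symmetry of the centered Gaussian limit, and Slutsky---goes through exactly as written.
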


\begin{proof}[Proof of Lemma \ref{L:SpecialConvergenceForHatH1}]
It suffices to show that
\begin{align*}
\frac{\phi_{n,T}(\hat{H}^\epsilon_1(\{\sqrt\epsilon\bar\sigma W^H_\tnk\}_{k=0}^n))}{\phi_{n,T}(H)}&\xrightarrow{prob.}1,\\
\sqrt{n}\left(\frac{\phi_{n,T}(\hat{H}^\epsilon_1(\{\sqrt\epsilon\bar\sigma W^H_\tnk\}_{k=0}^n))}{\phi_{n,T}(H)}-1\right)&\xrightarrow{\mathscr{D}}\mathcal{N}(0,\varsigma^{2}_{\star}(H)),
\end{align*}
whence the claim follows by reasoning as in \cite{kubiliusskorniakov, kubiliusskorniakovMelichov}.

If $\phi^{-1}_{n,T}$ were a right inverse of $\phi_{n,T}$, then this would be precisely the content of Lemma \ref{L:SpecialConvergence}, for one could write
\begin{align}
\phi_{n,T}\left(\hat{H}^\epsilon_1(\{\sqrt\epsilon\bar\sigma W^H_\tnk\}_{k=0}^n)\right) &= \left(\phi_{n,T}\circ\phi^{-1}_{n,T}\right)\left(\frac{1}{n|\bar\sigma|^2}\sum_{k=2}^{n}\left|\bar\sigma \Deltatwonk W^H \right|^{2}\right)\nonumber\\
&= \frac{1}{n|\bar\sigma|^2}\sum_{k=2}^{n}\left|\bar\sigma \Deltatwonk W^H \right|^{2}\label{Eq:Invertible}.
\end{align}

Of course, one actually has $\phi_{n,T}\circ\phi^{-1}_{n,T} = \min\{\cdot, 3\}$ (see Remark \ref{R:phi}). However, equation (\ref{Eq:VAlmostSureLimit}) implies in particular that for each $1 \leq j \leq \tilde m$,
\begin{align*}
\frac{1}{n}\sum^n_{k=2}|\Deltatwonk W^{H,j}|^2 &\xrightarrow {a.s.} 0,
\end{align*}
and hence also
\begin{align*}
0\leq\frac{1}{n|\bar\sigma|^2}\sum^n_{k=2}|\bar\sigma \Deltatwonk W^H|^2 &\leq \frac{\|\bar\sigma\|^2}{|\bar\sigma|^2}\sum^{\tilde m}_{j=1}\left( \frac{1}{n}\sum^n_{k=2}|\Deltatwonk W^{H,j}|^2 \right) \xrightarrow {a.s.} 0.
\end{align*}
Thus, one sees that for almost every realization of the random state, it is true that for $n$ sufficiently large the equality (\ref{Eq:Invertible}) is valid. Having made this observation the claim of the lemma follows by appeal to Lemma \ref{L:SpecialConvergence}.

\end{proof}
With the above lemmas and results at hand, we are now ready to present the proof of Theorem \ref{T:ConvergenceForHatH1}.
\begin{proof}[Proof of Theorem \ref{T:ConvergenceForHatH1}]
It suffices to show that
\begin{align*}
\frac{\phi_{n,T}(\hat{H}^\epsilon_1(\{\xe_\tnk\}_{k=0}^n))}{\phi_{n,T}(H)}&\xrightarrow{prob.}1,\\
\sqrt{n}\left(\frac{\phi_{n,T}(\hat{H}^\epsilon_1(\{\xe_\tnk\}_{k=0}^n))}{\phi_{n,T}(H)}-1\right)&\xrightarrow{\mathscr{D}}\mathcal{N}(0,\varsigma^{2}_{\star}(H)),
\end{align*}
and the claim follows by the same arguments as in \cite{kubiliusskorniakov,kubiliusskorniakovMelichov}. To do so, we use Lemmas \ref{L:FilteredDifferencesSquare} and \ref{L:SpecialConvergenceForHatH1}. Indeed, combining the approximation of Lemma \ref{L:FilteredDifferencesSquare} with the reasoning of the proof of Lemma \ref{L:SpecialConvergenceForHatH1}, one sees that
\begin{align*}
\phi_{n,T}\left(\hat{H}^\epsilon_1(\{\xe_\tnk\}_{k=0}^n)\right) &= \left(\phi_{n,T}\circ\phi^{-1}_{n,T}\right)\left(\frac{1}{n\epsilon|\bar\sigma|^2}\sum_{k=2}^{n}\left| \Deltatwonk \xe \right|^{2}\right)\\
&= \frac{1}{n|\bar\sigma|^2}\sum_{k=2}^{n}\left|\bar\sigma \Deltatwonk W^H \right|^{2} + o(n^{-2H}),
\end{align*}
where the little-$o$ is understood in probability as $n\to\infty$ and $\varepsilon:=(\epsilon,\eta)\to0$. Therefore,
\begin{align*}
\frac{\phi_{n,T}(\hat{H}^\epsilon_1(\{\xe_\tnk\}_{k=0}^n))}{\phi_{n,T}(H)}&=\frac{\phi_{n,T}(\hat{H}^\epsilon_1(\{\sqrt\epsilon\bar\sigma W^H_\tnk\}_{k=0}^n))}{\phi_{n,T}(H)} + o(1),
\end{align*}
whence the claim follows by appeal to Lemma \ref{L:SpecialConvergenceForHatH1}.
\end{proof}
Whenever one does not have knowledge of the magnitude of the noise
$\epsilon>0$, which is required for using the estimator
$\hat{H}_1^{\epsilon}$ and applying Theorem
\ref{T:ConvergenceForHatH1}, we introduce a second estimator of $H$
for which knowledge of the magnitude of the noise
$\epsilon>0$ is not required. This is the objcet of Theorem
\ref{T:ConvergenceForHatH2} below.
\begin{theorem}\label{T:ConvergenceForHatH2}
Assume Condition \ref{c:regularity} and Condition \ref{c:recurrencebasic} and, if $\sigma$ is nonconstant, assume that for some $\alpha > 0$ and $\beta \geq 2$ we also have Condition \ref{c:recurrence}-$(\alpha, \beta)$. Suppose that $n \to \infty$ and $\varepsilon := (\epsilon, \eta) \to 0$ in such a way that
\begin{enumerate}
	\item there is a $\rho_1 > 0$ such that $\eta^{-1} = O((\epsilon n^{2-2H})^{\rho_1})$ and
	\item if $\sigma$ is nonconstant, there is a $\rho_2 > 0$ such that $\eta = O(n^{-2-\rho_2})$.
\end{enumerate}
For $n>T$, given a random sample $\{\xe_\tnk\}_{k=0}^{2n}$, define the estimate
\begin{align}
\hat{H}_2(\{\xe_\tnk\}_{k=0}^{2n})
&:=\frac{1}{2}-\frac{1}{2\ln 2}\ln\left(\frac{\sum_{k=2}^{2n}\left|\Delta^{(2)}_{2n,k}\xe\right|^{2}}
{\sum_{k=2}^{n}\left|\Deltatwonk\xe\right|^{2}}\right).\label{Eq:HatHTwoDefinition}
\end{align}
Note that the filtered increments in the denominator are taken with double the spacing relative to those in the numerator.

We have that
\begin{equation*}
\hat{H}_2(\{\xe_\tnk\}_{k=0}^{2n})\rightarrow H \textrm{ in
                                    probability as }n\rightarrow\infty
                   \textrm{ and } \varepsilon :=
                                    (\epsilon, \eta) \to 0
\end{equation*}
                                    and
\begin{align*}
2\ln 2 \sqrt{n}\left(\hat{H}_2(\{\xe_\tnk\}_{k=0}^{2n})- H\right)&\rightarrow \mathcal{N}(0,\varsigma^{2}_{\star\star}(H)) \textrm{ in distribution as }n\rightarrow\infty \textrm{ and }\varepsilon := (\epsilon,\eta) \to 0,
\end{align*}
where the variance is given by
\[
\varsigma^{2}_{\star\star}(H):=\left(\frac{3}{2}\varsigma^{2}_{1}(H)-2\varsigma^{2}_{2}(H)\right)\left(\frac{1}{|\bar\sigma|^4} \sum_{i,k=1}^m\sum_{j,q=1}^{\tilde{m}}  \bar\sigma_{i, j}
\bar\sigma_{i, q} \bar\sigma_{k, j} \bar\sigma_{k,q}\right)
\]
where $m$ is the dimension of the slow process $X$, $\tilde m$ is the dimension of the noise $W^H$, $\varsigma^{2}_{1}(H)$ is as in Theorem \ref{T:ConvergenceForHatH1}, and
\begin{align*}
\varsigma^{2}_{2}(H)&:= \sum_{j \in \mathbb{Z}} \tilde{\rho}^2(j;H)
\end{align*}
with
\begin{align*}
 \tilde{\rho}(j;H) &:= \frac{-|j-3|^{2H} + 2|j-2|^{2H} + |j-1|^{2H} - 4|j|^{2H}+|j+1|^{2H}+2|j+2|^{2H}-|j+3|^{2H}}{2(4-2^{2H})2^H}.
\end{align*}
\end{theorem}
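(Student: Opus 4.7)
The plan is to mirror the proof of Theorem \ref{T:ConvergenceForHatH1}: first establish an idealized version of the statement in which the sample is taken from $\{\sqrt\epsilon\bar\sigma W^H_{\tnk}\}_{k=0}^{2n}$, then transfer the result to the actual estimator using Lemma \ref{L:FilteredDifferencesSquare}. A crucial observation is that $\hat H_2$ is defined through a logarithm of a ratio of sums of squared filtered increments, so the factor $\sqrt\epsilon$ cancels between numerator and denominator in the idealized setting; the idealized estimator therefore depends only on $W^H$ and the asymptotic regime collapses to $n\to\infty$.

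In the idealized case, I would introduce
\begin{align*}
U_n := \frac{1}{|\bar\sigma|^2}\sum_{k=2}^{n}|\bar\sigma\Deltatwonk W^H|^2, \qquad U_{2n} := \frac{1}{|\bar\sigma|^2}\sum_{k=2}^{2n}|\bar\sigma\Delta^{(2)}_{2n,k} W^H|^2,
\end{align*}
so that the idealized estimator equals $\tfrac12 - (2\ln 2)^{-1}\ln(U_{2n}/U_n)$. Applying Lemma \ref{L:SpecialConvergence} at levels $n$ and $2n$ yields $n^{2H-1}U_n \to T^{2H}(4-2^{2H})$ and $(2n)^{2H-1}U_{2n} \to T^{2H}(4-2^{2H})$ in probability, whence $U_{2n}/U_n \to 2^{1-2H}$ in probability, and substituting delivers consistency. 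Since the ratio is bounded away from zero asymptotically, the logarithm is well-defined with probability tending to $1$.

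For asymptotic normality in the idealized regime, the delta method applied to $(u,v)\mapsto -(2\ln 2)^{-1}(\ln v - \ln u)$ reduces the task to a \emph{joint} CLT for the rescaled pair $\sqrt n(\alpha_n - 1,\, \beta_n - 1)$, where $\alpha_n := n^{2H-1}U_n/[T^{2H}(4-2^{2H})]$ and $\beta_n := (2n)^{2H-1}U_{2n}/[T^{2H}(4-2^{2H})]$. The marginal variances $\varsigma_1^2(H)K$ and $\tfrac12\varsigma_1^2(H)K$, with $K := |\bar\sigma|^{-4}\sum_{i,k,j,q}\bar\sigma_{i,j}\bar\sigma_{i,q}\bar\sigma_{k,j}\bar\sigma_{k,q}$, follow from Lemma \ref{L:SpecialConvergence}, the factor $\tfrac12$ reflecting the switch from level $n$ to level $2n$. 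The joint convergence and cross-covariance follow by recycling the second-chaos and contraction-norm framework of the proof of Lemma \ref{L:SpecialConvergence}: decompose each of $\alpha_n-1$ and $\beta_n-1$ into diagonal and off-diagonal contributions across the independent scalar components $\{W^{H,j}\}_{j=1}^{\tilde m}$, express each piece as a double Wiener integral of a symmetrized tensor of indicator functions, and invoke the contraction-norm criterion \cite[Theorem 5.2.7]{noupecbook} to identify the joint Gaussian limit. The off-diagonal contributions at the two resolutions remain uncorrelated by component independence, so the cross-covariance is carried entirely by the diagonal contributions, and the essential object is the normalized correlation at integer lag $s$ between a second-order filter of spacing $T/n$ and one of spacing $T/(2n)$; a direct computation from $E[W^H_tW^H_u]=\tfrac12(t^{2H}+u^{2H}-|t-u|^{2H})$ combined with Isserlis' formula for squared Gaussians identifies this correlation as precisely $\tilde\rho(s;H)$, with the $2^H$ in its denominator encoding the ratio of scale standard deviations. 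Summing $\tilde\rho(s;H)^2$ over $s$ yields $\varsigma_2^2(H)$ and hence cross-covariance $\varsigma_2^2(H)K$; the delta method then produces the asymptotic variance
\begin{align*}
\varsigma_1^2(H)K + \tfrac12\varsigma_1^2(H)K - 2\varsigma_2^2(H)K = \bigl(\tfrac32\varsigma_1^2(H)-2\varsigma_2^2(H)\bigr)K = \varsigma_{\star\star}^2(H),
\end{align*}
as required.

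Finally, Lemma \ref{L:FilteredDifferencesSquare} applied at both levels $n$ and $2n$ shows that each sum $\epsilon^{-1}\sum_k|\Deltatwonk\xe|^2$ agrees with the idealized $\sum_k|\bar\sigma\Deltatwonk W^H|^2$ up to an error which, after the normalizations already in place, is negligible at the $\sqrt n$ scale (using $H > 1/2$), and a continuous-mapping/Slutsky argument for the logarithm transfers both convergence statements from the idealized estimator to $\hat H_2(\{\xe_\tnk\}_{k=0}^{2n})$. The main obstacle I anticipate is the cross-covariance computation: correctly identifying $\tilde\rho(s;H)$ as the cross-correlation kernel between second-order filters at the two different scales, tracking the $2^H$ normalization arising from the scale ratio, and verifying summability of $\tilde\rho(s;H)^2$ so that $\varsigma_2^2(H)$ is finite and the contraction-norm argument cleanly yields the second-chaos Gaussian limit.
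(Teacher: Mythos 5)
Your route is the same as the paper's: use Lemma \ref{L:FilteredDifferencesSquare} to replace the filtered increments of $\xe$ by those of $\bar\sigma W^H$ at both resolutions, prove a joint CLT for the two normalized quadratic variations (in the notation of the proof of Lemma \ref{L:SpecialConvergence}, for the pair built from $V^{j,q}_{n,T}$ and $V^{j,q}_{2n,T}$) with limit covariance matrix $\begin{pmatrix}\varsigma_1^2(H) & \varsigma_2^2(H)\\ \varsigma_2^2(H) & \varsigma_1^2(H)/2\end{pmatrix}$ multiplied by $K:=\frac{1}{|\bar\sigma|^4}\sum_{i,k,j,q}\bar\sigma_{i,j}\bar\sigma_{i,q}\bar\sigma_{k,j}\bar\sigma_{k,q}$, and finish with the delta method; your delta-method arithmetic $\varsigma_1^2K+\tfrac12\varsigma_1^2K-2\varsigma_2^2K=\varsigma_{\star\star}^2(H)$, and your identification of $\tilde\rho(\cdot;H)$ as the normalized correlation between second-order filters at spacings $T/n$ and $T/(2n)$ (with the $2^H$ coming from the ratio of the two filter standard deviations), are exactly what the paper asserts.

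There is, however, one concrete error in your cross-covariance accounting: the claim that the off-diagonal (mixed-component, $j\neq q$) contributions at the two resolutions are uncorrelated, so that the cross term is carried entirely by the diagonal contributions. Component independence does \emph{not} kill these correlations: for $j\neq q$ one has, up to the deterministic normalizations,
\begin{equation*}
E\bigl[V^{j,q}_{n,T}V^{j,q}_{2n,T}\bigr]\ \propto\ \sum_{k,k'}E\bigl[\Delta^{(2)}_{n,k}W^{H,j}\,\Delta^{(2)}_{2n,k'}W^{H,j}\bigr]\,E\bigl[\Delta^{(2)}_{n,k}W^{H,q}\,\Delta^{(2)}_{2n,k'}W^{H,q}\bigr],
\end{equation*}
a sum of products of same-component cross-scale covariances, each proportional to $\tilde\rho(\cdot;H)$, hence nonzero. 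These mixed terms contribute $\varsigma_2^2(H)\cdot\frac{1}{|\bar\sigma|^4}\sum_{j\neq q}\bigl(\sum_i\bar\sigma_{i,j}\bar\sigma_{i,q}\bigr)^2$ to the off-diagonal entry of the limit covariance matrix, while the diagonal terms contribute only $\varsigma_2^2(H)\cdot\frac{1}{|\bar\sigma|^4}\sum_{i,k,j}\bar\sigma_{i,j}^2\bar\sigma_{k,j}^2$; both pieces are needed to assemble the full factor $K$. As written, your reasoning would deliver $\varsigma_2^2(H)$ times only the diagonal part of $K$, i.e.\ the wrong cross-covariance and hence the wrong $\varsigma_{\star\star}^2(H)$ whenever $\tilde m\geq 2$ and the columns of $\bar\sigma$ are not orthogonal (you quote the correct final answer, but your derivation does not produce it). What \emph{is} true, and is what the single-scale argument in Lemma \ref{L:SpecialConvergence} uses, is that diagonal and mixed pieces are mutually uncorrelated, also across the two scales, since $E[V^{j,j}_{n,T}V^{j',q'}_{2n,T}]=0$ for $j'\neq q'$. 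Once the mixed-with-mixed cross-scale covariance is computed and added in, your argument coincides with the paper's.
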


\begin{proof}[Proof of Theorem \ref{T:ConvergenceForHatH2}] The proof is very similar to that already given for the first estimator and so we only indicate the direction of it.

By the approximation of Lemma \ref{L:FilteredDifferencesSquare} and with the notation of the proof of Lemma \ref{L:SpecialConvergence}, the behavior of the estimator is asymptotically equivalent in probability to
\begin{align*}
H - \frac{1}{2 \ln 2}\log\left(\frac{\sum^{m}_{i=1}\sum^{\tilde m}_{j,q=1}\bar\sigma_{i,j}\bar\sigma_{i,q}V^{j,q}_{2n,T}}{\sum^{m}_{i=1}\sum^{\tilde m}_{j,q=1}\bar\sigma_{i,j}\bar\sigma_{i,q}V^{j,q}_{n,T}}\right),
\end{align*}
and so one is led to consider the limiting behavior of this quantity.

One calculates that
\begin{align*}
\frac{1}{|\bar\sigma|^2}\sum^{m}_{i=1}\sum^{\tilde m}_{j,q=1}\bar\sigma_{i,j}\bar\sigma_{i,q}V^{j,q}_{n,T} \xrightarrow{a.s.} 1
\end{align*}
(the same of course for $2n$ in place of $n$) and that moreover
\begin{align*}
\sqrt{n}\begin{pmatrix}
\frac{1}{|\bar\sigma|^2}\sum^{m}_{i=1}\sum^{\tilde m}_{j,q=1}\bar\sigma_{i,j}\bar\sigma_{i,q}V^{j,q}_{n,T} - 1 \\
\frac{1}{|\bar\sigma|^2}\sum^{m}_{i=1}\sum^{\tilde m}_{j,q=1}\bar\sigma_{i,j}\bar\sigma_{i,q}V^{j,q}_{2n,T} - 1
\end{pmatrix} \xrightarrow{\mathscr{D}} \mathcal{N} \left(\begin{pmatrix}
0 \\
0
\end{pmatrix},   \begin{pmatrix}
   \varsigma_1^2(H)  &  \varsigma_2^2(H) \\
   \varsigma_2^2(H)  &  \varsigma_1^2(H) / 2
  \end{pmatrix}  \left(\frac{1}{|\bar\sigma|^4} \sum_{i,k=1}^m\sum_{j,q=1}^{\tilde{m}}  \bar\sigma_{i, j}
\bar\sigma_{i, q} \bar\sigma_{k, j} \bar\sigma_{k,q}\right)\right).
\end{align*}
The result follows from this by the delta method.
\end{proof}

\section{Trajectory fitting estimator for drift estimation}\label{S:TFE1}
Suppose that the drift coefficient $c$ in (\ref{Eq:ModelSystem}) depends upon an unknown parameter $\theta$ that lies in some bounded, convex, open subset $\Theta$ of some Euclidean space. We write
\begin{equation*}
\begin{cases} d\xa_t =
c_\theta(\xa_t, \ye_t) dt + \sqrt\epsilon \sigma(\ye_t) dW^H_t \\ d\ye_t = \frac{1}{\eta} f(\ye_t) dt
+ \frac{1}{\sqrt\eta} \tau(\ye_t) dB_t \\
\xa_0=x_0\in\mathcal{X}, \hspace{1pc} \ye_0=y_0\in\mathcal{Y}.
\end{cases}
\end{equation*}

\begin{definition}\label{zdefinition}
For $\theta\in\Theta$ and $0 \leq s \leq t \leq T$, let $Z^\theta(t, s)$ denote the matrix-valued solution to the equation
\begin{align*}
\frac{dZ^\theta(t, s)}{dt} = (\nabla_x \bar c_\theta)(\bar X^\theta_t) Z^\theta(t, s), \hspace{1pc} Z^\theta(s, s) = 1_m,
\end{align*}
where $1_m$ denotes the $m \times m$ identity matrix. By \cite[Proposition 2.14]{gilbargtrudinger}, the continuity of $t \mapsto (\nabla_x \bar c_\theta)(\bar X^\theta_t)$ on $(0, T]$ guarantees the semigroup relations $Z^\theta(t, s)Z^\theta(s, r) = Z^\theta(t, r)$ and the invertibility of $Z^\theta(t, s)$.
\end{definition}
A very simple approach to estimating $\theta$ is to choose a value
$\theta\in\bar\Theta$ that brings the vector $\{\bar
X^\theta_\tnk\}_{k=1}^n$ into nearest possible agreement with the
vector $\{\xa_\tnk\}_{k=1}^n$ of observations. We impose an
identifiability condition to guarantee that the choice is unique. Let us take as our contrast function the squared Euclidean distance
\begin{align*}
U(\theta; \{x_\tnk\}_{k=1}^n)&:=\Sigma_{k=1}^n \left| x_\tnk - \bar X^\theta_\tnk \right|^2.
\end{align*}

\begin{remark}
There are many other reasonable contrast functions that one could choose to use here. For example, with $Z^\theta$ as in Definition \ref{zdefinition}, recalling that $t^n_0=0$, one can define the alternative contrast function $U'(\theta; \{x_\tnk\}_{k=1}^n):=\Sigma_{k=1}^n \left| (x_\tnk - \bar X^\theta_\tnk) - Z^\theta(\tnk, \tnkm) (x_\tnkm - \bar X^\theta_\tnkm) \right|^2$. It is then easy to derive results analogous to the ones derived in this section for $U(\theta; \{x_\tnk\}_{k=1}^n)=\Sigma_{k=1}^n \left| x_\tnk - \bar X^\theta_\tnk \right|^2$, although it should be noted that a stronger identifiability condition may be required. We have chosen to work with the particular choice $U(\theta; \{x_\tnk\}_{k=1}^n)$ because the resulting estimator is perhaps among the simplest to analyze, implement, and compute, while exhibiting at the same time general stability in numerical behavior (see Section \ref{S:NumericalExamples}).
\end{remark}
\begin{remark}
Taking the $L^2$ distance requires that $T\beta\gamma\sup_{y\in\mathcal{Y}}\|\tau(y)\|^2<\alpha$ so that one may take $p=2$ in \cite[Theorem 1]{fluctuations_fbm_multiscale}, but makes it easier to deduce asymptotic normality of the resulting estimates by studying the Hessian of the contrast function.
\end{remark}
Finally, we define the mean-square Trajectory-Fitting Estimator
\begin{align}
\hat\theta_{TFE}(\{x_\tnk\}_{k=1}^n):=\arg\min_{\theta\in\bar\Theta}U(\theta; \{x_\tnk\}_{k=1}^n).\label{Eq:TFEDefinition}
\end{align}

\subsection{Finite sample calculations}\label{S:FixedNoObs}

In this section, we assume that we have a fixed number of observations, $n$, of the slow process, say $\{x_\tnk\}_{k=1}^n$ observed at given discrete times $\{\tnk\}_{k=1}^n\subset [0,T]$. We impose an identifiability condition to guarantee that for each $\theta_0\in\Theta$, $U(\theta; \{\bar X^{\theta_0}\}_{k=1}^n)$ is uniquely minimized over $\theta\in\bar\Theta$ at $\theta=\theta_0$.

\begin{condition}\label{first_identifiability_condition} (First Identifiability Condition)
For a given choice of $n\in\mathbb{N}$ we shall write Condition \ref{first_identifiability_condition}-($n$) to mean that for any $(\theta_1, \theta_2) \in \bar\Theta^2$, $\{\bar X^{\theta_1}_\tnk\}_{k=1}^n = \{\bar X^{\theta_2}_\tnk\}_{k=1}^n$ if and only if $\theta_1 = \theta_2$.
\end{condition}
We are going to show that this estimator is consistent and
asymptotically normal. Let us start with the consistency, which follows
from a modulus-of-continuity argument.
\begin{theorem}\label{consistencytfe} (Consistency of the Trajectory-Fitting Estimator)
Let $n\in\mathbb{N}$ be given. Assume Conditions \ref{c:regularity}, \ref{c:recurrence}-$(\alpha,\beta,\gamma)$ and \ref{first_identifiability_condition}-($n$). Assume also that $T\beta\gamma\sup_{y\in\mathcal{Y}}\|\tau(y)\|^2<\alpha$ so that one may take $p=2$ in \cite[Theorem 1]{fluctuations_fbm_multiscale}. For any $\theta_0\in\Theta$, $H_0\in(1/2, 1)$, and $\zeta>0$,
\begin{align*}
\lim_{\varepsilon\to0}P\left(|\hat\theta_{TFE}(\{\xazero_\tnk\}_{k=1}^n)-\theta_0|>\zeta\right)=0.
\end{align*}
\end{theorem}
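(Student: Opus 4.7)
The plan is to apply the classical M-estimator consistency strategy: establish uniform convergence of the random contrast function to a deterministic limit contrast, and then use the identifiability condition to separate the minimum. Write $U_\varepsilon(\theta) := U(\theta; \{\xazero_\tnk\}_{k=1}^n)$ and $U_0(\theta) := U(\theta; \{\bar X^{\theta_0}_\tnk\}_{k=1}^n) = \sum_{k=1}^n |\bar X^{\theta_0}_\tnk - \bar X^\theta_\tnk|^2$. The standing assumption $T\beta\gamma\sup_{y\in\mathcal{Y}}\|\tau(y)\|^2 < \alpha$ is precisely what allows one to apply Theorem \ref{T:LimitBehavior} with $p=2$, so that $\Ex{|\xazero_\tnk - \bar X^{\theta_0}_\tnk|^2} \to 0$ as $\varepsilon \to 0$ for each $k$.

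The first step I would carry out is the uniform-in-$\theta$ convergence $\sup_{\theta \in \bar\Theta} |U_\varepsilon(\theta) - U_0(\theta)| \to 0$ in probability. Applying the identity $\bigl||u|^2 - |v|^2\bigr| \leq |u-v|\cdot|u+v|$ termwise with $u = \xazero_\tnk - \bar X^\theta_\tnk$ and $v = \bar X^{\theta_0}_\tnk - \bar X^\theta_\tnk$ yields
\begin{align*}
\sup_{\theta \in \bar\Theta} |U_\varepsilon(\theta) - U_0(\theta)| &\leq \sum_{k=1}^n \bigl|\xazero_\tnk - \bar X^{\theta_0}_\tnk\bigr|\,\bigl(|\xazero_\tnk| + |\bar X^{\theta_0}_\tnk| + 2\sup_{\theta \in \bar\Theta}|\bar X^\theta_\tnk|\bigr).
\end{align*}
Condition \ref{c:regularity} (in particular, continuity of $c_\theta$ in $\theta$ and the global Lipschitz/sublinear growth control on $\bar c_\theta$) together with standard continuous dependence of ODE solutions on parameters shows that $\theta \mapsto \bar X^\theta_\tnk$ is continuous on the compact set $\bar\Theta$, so $M := \sup_{\theta\in\bar\Theta,\,1\leq k\leq n} |\bar X^\theta_\tnk|$ is a finite deterministic constant. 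Cauchy--Schwarz in $L^2(\Omega)$ applied to each summand, combined with Theorem \ref{T:LimitBehavior}, then sends the right-hand side to $0$ in $L^1(\Omega)$, and since $n$ is fixed this costs nothing.

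The second step is the identifiability-based separation of the limiting minimum. Condition \ref{first_identifiability_condition}-($n$) says $U_0$ vanishes on $\bar\Theta$ precisely at $\theta_0$, so for any $\zeta > 0$ the set $K_\zeta := \{\theta \in \bar\Theta : |\theta - \theta_0| \geq \zeta\}$ is compact, disjoint from $\{\theta_0\}$, and $U_0$ is continuous on it, whence $\delta := \inf_{\theta \in K_\zeta} U_0(\theta) > 0$. A standard contrapositive closes the argument: on the event $\{\sup_{\theta \in \bar\Theta} |U_\varepsilon(\theta) - U_0(\theta)| < \delta/3\}$, no minimizer $\hat\theta_{TFE}$ can lie in $K_\zeta$, because otherwise $U_\varepsilon(\hat\theta_{TFE}) \geq U_0(\hat\theta_{TFE}) - \delta/3 \geq 2\delta/3 > \delta/3 \geq U_0(\theta_0) + \delta/3 \geq U_\varepsilon(\theta_0)$, contradicting minimality. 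Thus $P(|\hat\theta_{TFE} - \theta_0| > \zeta) \leq P(\sup_\theta |U_\varepsilon - U_0| \geq \delta/3) \to 0$. I expect the argument to be essentially routine; the only substantive technical points are the continuous dependence of $\bar X^\theta$ on $\theta$ (needed both for the uniformity in the first step and the continuity of $U_0$ in the second) and the strict positivity of $\delta$, both of which follow cleanly from Condition \ref{c:regularity}, compactness of $\bar\Theta$, and Condition \ref{first_identifiability_condition}-($n$).
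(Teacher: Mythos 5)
Your proof is correct, and it reaches the conclusion by a more self-contained route than the paper. The paper's proof introduces the modulus of continuity $w(\phi;\{x_\tnk\}_{k=1}^n)$ of the contrast in $\theta$, observes via \cite[Theorem 1]{fluctuations_fbm_multiscale} (Theorem \ref{T:LimitBehavior} with $p=2$) that $w(\phi;\{\xazero_\tnk\}_{k=1}^n)$ converges in probability to $w(\phi;\{\bar X^{\theta_0}_\tnk\}_{k=1}^n)$, notes that the latter tends to $0$ uniformly as $\phi\to0$ by continuity of $\theta\mapsto\bar X^\theta_\tnk$, and then invokes the general minimum-contrast consistency theorem \cite[Theorem 3.2.8]{dc:probability} together with Condition \ref{first_identifiability_condition}-($n$). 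You instead prove directly the uniform convergence $\sup_{\theta\in\bar\Theta}\left|U(\theta;\{\xazero_\tnk\}_{k=1}^n)-U(\theta;\{\bar X^{\theta_0}_\tnk\}_{k=1}^n)\right|\to0$ in probability --- your bound via $\left||u|^2-|v|^2\right|\le|u-v|\,|u+v|$, Cauchy--Schwarz, and the $p=2$ moment bound is exactly where the hypothesis $T\beta\gamma\sup_{y\in\mathcal{Y}}\|\tau(y)\|^2<\alpha$ enters, matching the paper's use of it --- and then carry out by hand the compactness/separation step ($\delta:=\inf_{K_\zeta}U_0>0$ plus the contrapositive) that the cited Dacunha-Castelle--Duflo theorem packages. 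The ingredients are the same in both arguments (the averaging result with $p=2$ for a fixed number $n$ of time points, continuity of $\theta\mapsto\bar X^\theta_\tnk$ on the compact $\bar\Theta$, and identifiability); what your version buys is an elementary, explicit argument with no external black box, while the paper's version is shorter and reuses the same template for the analogous MCE consistency proof. The points you treat as standard --- continuity of $\theta\mapsto\bar X^\theta$ (which follows from Condition \ref{c:regularity} since the invariant measure $\mu$, and hence $\bar c_\theta$, inherits the needed regularity, $\mu$ not depending on $\theta$) and measurable selection of the minimizer --- are handled at the same or lesser level of detail in the paper, so I see no gap.
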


\begin{proof}[Proof of Theorem \ref{consistencytfe}]
Consider the modulus of continuity
\begin{align*}
w(\phi; \{x_\tnk\}_{k=1}^n):=\sup_{(\theta_1, \theta_2)\in\Theta^2;|\theta_1-\theta_2|\leq\phi}|U(\theta_1; \{x_\tnk\}_{k=1}^n)-U(\theta_2; \{x_\tnk\}_{k=1}^n)|
\end{align*}
defined for $\phi>0$. It is clear from \cite[Theorem 1]{fluctuations_fbm_multiscale} that as $\varepsilon\to0$, $w(\phi; \{\xa_\tnk\}_{k=1}^n)$ converges in probability uniformly in $\theta\in\bar\Theta$ and $\phi>0$ to $w(\phi; \{\bar X^\theta_\tnk\}_{k=1}^n)$. It is also clear by continuity of $\theta \mapsto \{\bar X^\theta_\tnk\}_{k=1}^n$ and the triangle inquality that $\lim_{\phi\to0}\sup_{\theta\in\bar\Theta}w(\phi; \{\bar X^\theta_\tnk\}_{k=1}^n) = 0$. The claim of the theorem follows by \cite[Theorem 3.2.8]{dc:probability}.
\end{proof}
The next result establishes the asymptotic normality of our estimator.
\begin{theorem}\label{normalitytfe} (Asymptotic Normality of the Trajectory-Fitting Estimator)
Let $n\in\mathbb{N}$ be given. Assume Conditions \ref{c:regularity}, \ref{c:recurrence}-$(\alpha,\beta,\gamma)$ and \ref{first_identifiability_condition}-($n$). Assume also that $T\beta\gamma\sup_{y\in\mathcal{Y}}\|\tau(y)\|^2<\alpha$ so that one may take $p=2$ in \cite[Theorem 1]{fluctuations_fbm_multiscale}. For any $\theta_0\in\Theta$, $H_0\in(1/2, 1)$,  we have that
$\frac{1}{\sqrt{\epsilon}}\left(\hat\theta_{TFE}(\{\xazero_\tnk\}_{k=1}^n)-\theta_0\right)$ converges in distribution as $\varepsilon:=(\epsilon, \eta) \to 0$ to the normal distribution $\mathcal{N}(0, M(\theta_0, H_0; n))$, where the variance is given by
\begin{align*}
M(\theta, H; n)
 &:= \left( \Sigma_{k=1}^n \left[ (\nabla_\theta\Big|_{\theta}\bar X^\cdot_\tnk)^T \nabla_\theta\Big|_{\theta}\bar X^\cdot_\tnk \right] \right)^{-1} \times \\
 & \hspace{4pc} \left( \Sigma_{j, k = 1}^n \left[ \left( \nabla_\theta\Big|_{\theta} \bar X^\cdot_\tnj \right)^T E\left[ \xia_\tnj(\xia_\tnk)^T \right] \left( \nabla_\theta\Big|_{\theta} \bar X^\cdot_\tnk \right) \right] \right) \hspace{1pc} \times \\
 &\hspace{4pc}  \left( \Sigma_{k=1}^n \left[ (\nabla_\theta\Big|_{\theta}\bar X^\cdot_\tnk)^T \nabla_\theta\Big|_{\theta}\bar X^\cdot_\tnk \right] \right)^{-1}.
\end{align*}
\end{theorem}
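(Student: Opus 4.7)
The plan is to apply the classical M-estimator argument adapted to the small-noise setting. By the consistency established in Theorem \ref{consistencytfe}, with probability approaching $1$ as $\varepsilon\to0$, the estimator $\hat\theta_{TFE}$ lies in the open set $\Theta$, so the first-order condition $\nabla_\theta U(\hat\theta_{TFE}; \{\xazero_\tnk\}_{k=1}^n) = 0$ holds. A mean-value expansion of $\nabla_\theta U$ around $\theta_0$ in integral form gives
\[
0 = \nabla_\theta U(\theta_0;\{\xazero_\tnk\}_{k=1}^n) + \mathcal{H}_\varepsilon \cdot (\hat\theta_{TFE}-\theta_0), \quad \mathcal{H}_\varepsilon := \int_0^1 \nabla_\theta^2 U\bigl(\theta_0 + s(\hat\theta_{TFE}-\theta_0);\{\xazero_\tnk\}_{k=1}^n\bigr)\,ds,
\]
so, provided $\mathcal{H}_\varepsilon$ is eventually invertible,
\[
\frac{1}{\sqrt\epsilon}(\hat\theta_{TFE}-\theta_0) = -\mathcal{H}_\varepsilon^{-1} \cdot \frac{1}{\sqrt\epsilon}\nabla_\theta U(\theta_0;\{\xazero_\tnk\}_{k=1}^n).
\]

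For the gradient factor, direct differentiation yields
\[
\frac{1}{\sqrt\epsilon}\nabla_\theta U(\theta_0;\{\xazero_\tnk\}_{k=1}^n) = -2\sum_{k=1}^n (\nabla_\theta\bar X^{\theta_0}_\tnk)^T \cdot \frac{\xazero_\tnk-\bar X^{\theta_0}_\tnk}{\sqrt\epsilon}.
\]
By Theorem \ref{T:LimitBehavior}, the rescaled residual $\epsilon^{-1/2}(\xazero-\bar X^{\theta_0})$ converges in distribution on $C([0,T];\mathcal{X})$ to the process $\xiazero$, which is a centered Gaussian process because it solves a linear SDE with deterministic coefficients driven by a Brownian motion and an independent fractional Brownian motion. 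Since evaluation at the finite grid $\{\tnk\}_{k=1}^n$ and the subsequent linear combination are continuous operations, the continuous mapping theorem yields
\[
\frac{1}{\sqrt\epsilon}\nabla_\theta U(\theta_0;\{\xazero_\tnk\}_{k=1}^n) \xrightarrow{d} -2\sum_{k=1}^n (\nabla_\theta\bar X^{\theta_0}_\tnk)^T \xiazero_\tnk,
\]
a centered Gaussian vector with covariance $4\sum_{j,k=1}^n(\nabla_\theta\bar X^{\theta_0}_\tnj)^T \E[\xiazero_\tnj(\xiazero_\tnk)^T](\nabla_\theta\bar X^{\theta_0}_\tnk)$.

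For the Hessian factor, direct differentiation gives
\[
\tfrac{1}{2}\nabla_\theta^2 U(\theta;\{\xazero_\tnk\}_{k=1}^n) = \sum_{k=1}^n (\nabla_\theta\bar X^\theta_\tnk)^T(\nabla_\theta\bar X^\theta_\tnk) - \sum_{k=1}^n (\nabla_\theta^2\bar X^\theta_\tnk)\cdot(\xazero_\tnk-\bar X^\theta_\tnk).
\]
At $\theta=\theta_0$ the second sum is $O(\sqrt\epsilon)$ in $L^p$ by Theorem \ref{T:LimitBehavior} and therefore vanishes in probability. Consistency of $\hat\theta_{TFE}$, combined with the continuity in $\theta$ (uniform on a neighborhood of $\theta_0$) of the maps $\theta\mapsto\nabla_\theta\bar X^\theta_\tnk$ and $\theta\mapsto\nabla_\theta^2\bar X^\theta_\tnk$ guaranteed by Condition \ref{c:regularity}, then allows one to transfer this convergence through the integrand defining $\mathcal{H}_\varepsilon$, producing $\mathcal{H}_\varepsilon \to 2\sum_{k=1}^n (\nabla_\theta\bar X^{\theta_0}_\tnk)^T(\nabla_\theta\bar X^{\theta_0}_\tnk)$ in probability. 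Slutsky's theorem then delivers the announced limit $\mathcal{N}(0, M(\theta_0, H_0; n))$.

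The main technical obstacle is twofold. First, the mean-value matrix $\mathcal{H}_\varepsilon$ involves $\nabla^2_\theta U$ evaluated along a random segment; converting pointwise-at-$\theta_0$ convergence into convergence of the integral requires uniform-in-$\theta$ control on a shrinking neighborhood of $\theta_0$, which has to be extracted from Condition \ref{c:regularity} together with a uniform $L^p$ bound on $\xazero - \bar X^{\theta_0}$ from Theorem \ref{T:LimitBehavior} (this is where the hypothesis that one may take $p=2$ is used). Second, invertibility of the limiting Hessian is not a formal consequence of the identifiability Condition \ref{first_identifiability_condition}-($n$) — injectivity of $\theta\mapsto\{\bar X^\theta_\tnk\}_{k=1}^n$ does not by itself guarantee full column rank of the stacked Jacobian at $\theta_0$ — so one needs either to strengthen the hypothesis at this stage, or to observe that the identifiability together with $C^2$-smoothness in $\theta$ implies non-degeneracy of the differential at $\theta_0$ on the parameter set of interest. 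Everything else reduces to routine, if careful, book-keeping.
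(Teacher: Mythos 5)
Your proposal is correct and takes essentially the same route as the paper: a Taylor/mean-value expansion of $\nabla_\theta U$ around $\theta_0$, convergence in distribution of the rescaled gradient to $2\,\Sigma_{k=1}^n (\xiazero_\tnk)^T \nabla_\theta\Big|_{\theta_0}\bar X^\cdot_\tnk$ via the fluctuation theorem, convergence in probability of the (mean-value) Hessian to the Gram matrix $2\,\Sigma_{k=1}^n (\nabla_\theta\Big|_{\theta_0}\bar X^\cdot_\tnk)^T \nabla_\theta\Big|_{\theta_0}\bar X^\cdot_\tnk$, and Slutsky, with the rigorous assembly delegated (as in the paper) to the classical argument of Dacunha-Castelle and Duflo. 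The invertibility issue you flag is not resolved in the paper either --- its proof simply says ``assuming the inverse exists,'' which is implicit in the definition of $M(\theta_0,H_0;n)$ --- though note that your second suggested remedy is not valid as stated, since injectivity of $\theta\mapsto\{\bar X^\theta_\tnk\}_{k=1}^n$ together with smoothness does not imply a nondegenerate differential at $\theta_0$.
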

Before presenting the proof of this theorem, let us make some related remarks.
\begin{remark}\label{R:LimitingVariance}
For the limit as $n\to\infty$ of the limiting variance, one may write explicitly
\begin{align}
\lim_{n\to\infty} M(\theta, H; n) &= \left( \int^T_{t=0} \left[ (\nabla_\theta\Big|_{\theta}\bar X^\cdot_t)^T \nabla_\theta\Big|_{\theta}\bar X^\cdot_t \right] dt \right)^{-1} \times \nonumber\\
 & \hspace{4pc} \left( \int^T_{\tone=0} \int^T_{\ttwo=0} \left[ \left( \nabla_\theta\Big|_{\theta} \bar X^\cdot_\tone \right)^T E\left[ \xia_\tone(\xia_\ttwo)^T \right] \left( \nabla_\theta\Big|_{\theta} \bar X^\cdot_\ttwo \right) \right] d\ttwo d\tone \right) \hspace{1pc} \times \nonumber\\
 &\hspace{4pc}  \left( \int^T_{t=0} \left[ (\nabla_\theta\Big|_{\theta}\bar X^\cdot_t)^T \nabla_\theta\Big|_{\theta}\bar X^\cdot_t \right] dt \right)^{-1}.\nonumber
\end{align}
\end{remark}

\begin{remark}
For $t \in [0, T]$, one may write explicitly
\begin{align}
\nabla_\theta\Big|_{\theta}\bar X^\cdot_t &= \int^t_0 Z^\theta(t, s) (\nabla_\theta\bar c_\cdot)(\bar X^\theta_s)ds.\nonumber
\end{align}
\end{remark}

\begin{remark}
For $(\tone, \ttwo) \in [0,T]^2$, one may write explicitly
\begin{align}
E\left[ \xia_\tone(\xia_\ttwo)^T \right] &= \int^{\tone \wedge \ttwo}_{s=0} \left( Z^\theta(\tone, s) \lambda \Sigma^\theta_\Phi(\bar X^\theta_s) \right) \left( Z^\theta(\ttwo, s) \lambda \Sigma^\theta_\Phi(\bar X^\theta_s) \right)^T ds \nonumber\\
& + H(2H-1) \int^\tone_{\sone=0} \int^\ttwo_{\stwo=0} \left( Z^\theta(\tone, \sone) \bar\sigma \right) \left( Z^\theta(\ttwo, \stwo) \bar\sigma \right)^T | \sone - \stwo |^{2H-2} d\stwo d\sone.\nonumber
\end{align}
\end{remark}

\begin{proof}[Proof of Theorem \ref{normalitytfe}] Let us suppress the data $\{\xazero_\tnk\}_{k=1}^n$. By Taylor's theorem,
\begin{align*}
0 &= \frac{1}{\sqrt\epsilon}\nabla_\theta\Big|_{\hat\theta_{TFE}}U = \frac{1}{\sqrt\epsilon}\nabla_\theta\Big|_{\theta_0}U + \frac{1}{\sqrt\epsilon}(\hat\theta_{TFE}-\theta_0)^T \nabla^2_\theta\Big|_{\theta^\dagger}U,
\end{align*}
where $\theta^\dagger$ is an appropriately-chosen point on the line segment connecting $\hat\theta_{TFE}$ with $\theta_0$. Assuming the inverse exists, we may reexpress this as
\begin{align*}
\frac{1}{\sqrt\epsilon}(\hat\theta_{TFE}-\theta_0) &= \left(\nabla^2_\theta\Big|_{\theta^\dagger}U\right)^{-1} \left(-\frac{1}{\sqrt\epsilon}\nabla_\theta\Big|_{\theta_0}U\right)^T.
\end{align*}

Thus, it suffices to establish a limit in distribution of $-\frac{1}{\sqrt\epsilon}\nabla_\theta\Big|_{\theta_0}U$ and an invertible limit in probability of $\nabla^2_\theta\Big|_{\theta^\dagger}U$; the interested reader is referred to \cite[Section 3.3.4]{dc:probability} for a rigorous justification of this now-classical approach.

For the rescaled gradient of the contrast function, we have
\begin{align}
-\frac{1}{\sqrt\epsilon}\nabla_\theta\Big|_{\theta_0}U &= 2 \Sigma_{k=1}^n \left[ \frac{1}{\sqrt\epsilon} (\xazero_\tnk - \bar X^{\theta_0}_\tnk)^T \nabla_\theta\Big|_{\theta_0} \bar X^\cdot_\tnk \right],\label{Eq:GradientContrastFcn}
\end{align}
which converges in distribution to $ 2 \Sigma_{k=1}^n \left[ (\xiazero_\tnk)^T \nabla_\theta\Big|_{\theta_0} \bar X^\cdot_\tnk \right]$ by \cite[Theorem 2]{fluctuations_fbm_multiscale}, where $\xia$, the limit of the fluctuations, is given for $t\in[0,T]$ by the mixed stochastic integral (recall Theorem \ref{T:LimitBehavior})
\begin{align*}
\xia_t &= \lambda \int^t_0 Z^\theta(t, s) \Sigma_\Phi^\theta(\bar X_s)  d \tilde B_s + \int^t_0 Z^\theta(t, s) \bar \sigma d \tilde W^H_s,
\end{align*}
where $\tilde W^H$ is a fractional Brownian motion with Hurst parameter $H$ and $\tilde B$ is a standard Brownian motion independent from $\tilde W^H$.

Meanwhile, for the Hessian of the contrast function, we have
\begin{align}
\nabla^2_\theta\Big|_{\theta^\dagger}U &= 2 \Sigma_{k=1}^n \left[ (\nabla_\theta\Big|_{\theta^\dagger} \bar X^\cdot_\tnk)^T \nabla_\theta\Big|_{\theta^\dagger} \bar X^\cdot_\tnk \right] - 2 \Sigma_{k=1}^n \nabla_\theta\Big|_{\theta^\dagger}\left( (\xazero_\tnk - \bar X^{\theta^\dagger}_\tnk)^T \nabla_\theta\Big|_{\theta}\bar X^\cdot_\tnk \right)^T,\label{Eq:HessianContrastFcn}
\end{align}
which converges in probability to $2 \Sigma_{k=1}^n \left[ (\nabla_\theta\Big|_{\theta_0}\bar X^\cdot_\tnk)^T \nabla_\theta\Big|_{\theta_0}\bar X^\cdot_\tnk \right]$; to see this, recall that $\hat\theta_{TFE}$ converges in probability to $\theta_0$ by Theorem \ref{consistencytfe}, that $\xazero_t$ converges in probability to $\bar X^{\theta_0}_t$ uniformly in $t\in[0, T]$, and that $\nabla^2_\theta\Big|_\theta \bar X^\cdot_t$ is bounded uniformly in $\theta\in\Theta$ and $t\in[0, T]$.

Putting these together,
\begin{align}
\frac{1}{\sqrt\epsilon}(\hat\theta_{TFE}(\{\xazero_\tnk\}_{k=1}^n)-\theta_0) &\xrightarrow{\mathcal{D}} \left( \Sigma_{k=1}^n \left[ (\nabla_\theta\Big|_{\theta_0}\bar X^\cdot_\tnk)^T \nabla_\theta\Big|_{\theta_0}\bar X^\cdot_\tnk \right] \right)^{-1} \left( \Sigma_{k=1}^n \left[ (\xiazero_\tnk)^T \nabla_\theta\Big|_{\theta_0} \bar X^\cdot_\tnk \right] \right)^T,\nonumber
\end{align}
which is to say that
$\frac{1}{\sqrt\epsilon}(\hat\theta_{TFE}(\{\xazero_\tnk\}_{k=1}^n)-\theta_0)$
converges in distribution as $\varepsilon\to0$ to $\mathcal{N}(0,
M(\theta_0, H_0; n))$. This completes the proof of the theorem.
\end{proof}

\subsection{High-frequency regime for the trajectory fitting estimator}\label{S:HighFrequencyTFE1}
Here, in contrast with Subsection \ref{S:FixedNoObs}, where we assume that the number of observations $n$ is fixed, we consider the case where $n$ grows to infinity at the same time as $\epsilon,\eta$ are taken to zero. For notational convenience, let us write $\Delta:=T/n$ for the sampling interval.

\begin{theorem}\label{consistencytfe_highFrequency} (Consistency of the Trajectory-Fitting Estimator as $(\epsilon+\eta+\Delta)\to0$)
Assume Conditions \ref{c:regularity}, \ref{c:recurrence}-$(\alpha,\beta,\gamma)$ and \ref{first_identifiability_condition}-($n$). Assume also that $T\beta\gamma\sup_{y\in\mathcal{Y}}\|\tau(y)\|^2<\alpha$ so that one may take $p=2$ in \cite[Theorem 1]{fluctuations_fbm_multiscale}. Assume that $\epsilon$, $\eta$, and $\Delta$ all go to zero in such a way that $\frac{\sqrt{\epsilon}+\sqrt{\eta}}{\Delta}\rightarrow 0$. For any $\theta_0\in\Theta$, $H_0\in(1/2, 1)$, and $\zeta>0$,
\begin{align*}
\lim_{(\epsilon+\eta+\Delta)\to0}P\left(|\hat\theta_{TFE}(\{\xazero_\tnk\}_{k=1}^n)-\theta_0|>\zeta\right)=0.
\end{align*}
\end{theorem}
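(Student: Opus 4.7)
The plan is to rescale the contrast function by the sampling interval $\Delta$ and apply a standard M-estimator consistency argument built on uniform-in-$\theta$ convergence. Since multiplying $U$ by the positive constant $\Delta$ does not change its argmin, I would work with
\begin{align*}
M_n(\theta) := \Delta \cdot U(\theta; \{\xazero_\tnk\}_{k=1}^n) = \frac{T}{n}\sum_{k=1}^n \bigl| \xazero_\tnk - \bar X^\theta_\tnk \bigr|^2
\end{align*}
and its candidate deterministic limit
\begin{align*}
M(\theta) := \int_0^T \bigl| \bar X^{\theta_0}_t - \bar X^\theta_t \bigr|^2 dt.
\end{align*}

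First I would establish $\sup_{\theta\in\bar\Theta} |M_n(\theta) - M(\theta)| \to 0$ in probability by splitting the error, via the triangle inequality, into a stochastic term
\begin{align*}
\Bigl| \Delta\sum_{k=1}^n |\xazero_\tnk - \bar X^\theta_\tnk|^2 - \Delta\sum_{k=1}^n |\bar X^{\theta_0}_\tnk - \bar X^\theta_\tnk|^2 \Bigr|
\end{align*}
and a Riemann-sum term
\begin{align*}
\Bigl| \Delta\sum_{k=1}^n |\bar X^{\theta_0}_\tnk - \bar X^\theta_\tnk|^2 - \int_0^T |\bar X^{\theta_0}_t - \bar X^\theta_t|^2 dt \Bigr|.
\end{align*}
For the stochastic term, the elementary inequality $|a^2 - b^2| \leq |a-b|(|a|+|b|)$ combined with the fact that $\bar X^\theta_t$ is bounded uniformly in $(\theta,t)\in\bar\Theta\times[0,T]$ (by Condition \ref{c:regularity} and Gronwall applied to $\dot{\bar X}=\bar c_\theta(\bar X)$) reduces the bound to a constant multiple of $\sup_{t\in[0,T]} |\xazero_t - \bar X^{\theta_0}_t|$, which is $O_P(\sqrt{\epsilon}+\sqrt{\eta})$ by Theorem \ref{T:LimitBehavior} applied with $p=2$ (permissible under the standing assumption $T\beta\gamma\sup_y\|\tau(y)\|^2<\alpha$). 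For the Riemann-sum term, Lipschitz-in-$t$ regularity of $\bar X^\theta$, uniform in $\theta\in\bar\Theta$ (again from Condition \ref{c:regularity}), yields a deterministic bound of order $\Delta$ uniformly in $\theta$. Together, the two pieces vanish in the joint limit, and the ratio assumption $(\sqrt{\epsilon}+\sqrt{\eta})/\Delta\to0$ in the theorem is more than enough to guarantee this (in fact consistency only requires $\sqrt{\epsilon}+\sqrt{\eta}\to0$ and $\Delta\to0$ separately).

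Next I would verify that $M$ has a unique minimizer at $\theta_0$. Since $M(\theta)\geq0=M(\theta_0)$, any minimizer $\theta_\star$ satisfies $\bar X^{\theta_\star}_t=\bar X^{\theta_0}_t$ for almost every $t\in[0,T]$, hence for every $t$ by continuity, and in particular at the grid $\{\tnk\}_{k=1}^n$; Condition \ref{first_identifiability_condition}-($n$) then forces $\theta_\star=\theta_0$. Continuity of $M$ on the compact set $\bar\Theta$ together with the uniform convergence of $M_n$ to $M$ then gives the claim by the standard argmin continuous mapping theorem: for any $\zeta>0$, the (deterministic) quantity $\delta:=\inf_{|\theta-\theta_0|\geq\zeta} M(\theta) - M(\theta_0)$ is strictly positive, and on the event $\{\sup_\theta|M_n(\theta)-M(\theta)|<\delta/2\}$ one necessarily has $|\hat\theta_{TFE}-\theta_0|<\zeta$.

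The only real obstacle is the uniform-in-$\theta$ control of the stochastic term, and this reduces without much difficulty to the already-known pathwise estimate in Theorem \ref{T:LimitBehavior} because, after the $|a^2-b^2|$ factorization, the $\theta$-dependence is absorbed into a factor that is bounded uniformly on $\bar\Theta\times[0,T]$. Everything else is the same modulus-of-continuity machinery used in Theorem \ref{consistencytfe}, now applied to the renormalized contrast $M_n$ instead of $U$ itself.
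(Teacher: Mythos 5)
Your proof is correct, but it takes a route that differs in substance from the paper's. The paper works with the unrescaled contrast $U$ and shows, via the same $a^{2}-b^{2}=(a-b)(a+b)$ factorization you use, that $\sup_{\theta_1,\theta_2}\left|U(\theta_2;\{\xa_\tnk\}_{k=1}^n)-U(\theta_2;\{\bar X^{\theta_1}_\tnk\}_{k=1}^n)\right|\to0$; since this is a sum of $n=T/\Delta$ terms, each of order $\sqrt{\epsilon}+\sqrt{\eta}$ by Theorem \ref{T:LimitBehavior}, the hypothesis $(\sqrt{\epsilon}+\sqrt{\eta})/\Delta\to0$ is exactly what makes that sum vanish, and the argument is then closed with the same modulus-of-continuity machinery and \cite[Theorem 3.2.8]{dc:probability} as in Theorem \ref{consistencytfe}, the comparison object being the discrete deterministic contrast for each $n$. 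You instead renormalize by $\Delta$ (harmless for the argmin), pass to the continuum criterion $M(\theta)=\int_0^T|\bar X^{\theta_0}_t-\bar X^\theta_t|^2\,dt$, and finish with a well-separated-minimum argument, with identifiability entering only through ``equal paths on $[0,T]$ implies equality at the grid, hence $\theta=\theta_0$'' via Condition \ref{first_identifiability_condition}-($n$). What your normalization buys is that the stochastic error ($O_P(\sqrt{\epsilon}+\sqrt{\eta})$ uniformly in $\theta$) and the Riemann error ($O(\Delta)$) vanish separately, so the coupling assumption $(\sqrt{\epsilon}+\sqrt{\eta})/\Delta\to0$ is indeed not needed for consistency --- in the paper it is an artifact of bounding the unrescaled sum (it is, however, invoked again for the normality statement in Theorem \ref{normalitytfe_highFrequency}). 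What the paper's route buys is that it reuses the fixed-$n$ proof essentially verbatim, without introducing the integral limit or the uniform-in-$\theta$ boundedness and Lipschitz-in-$t$ estimates for $\bar X^\theta$ that your Riemann-sum step requires (these do hold under Condition \ref{c:regularity}). One cosmetic point: in your stochastic bound the factor $|\xazero_\tnk-\bar X^\theta_\tnk|$ is random rather than a constant, but bounding it by $\sup_{0\le t\le T}|\xazero_t-\bar X^{\theta_0}_t|$ plus a deterministic constant keeps the whole term $o_P(1)$, so nothing is lost.
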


\begin{proof}[Proof of Theorem \ref{consistencytfe_highFrequency}]
The proof of this result is very similar to that of Theorem \ref{consistencytfe}. The main difference is that we would like to show that the assumed relationship among $\epsilon$, $\eta$, and $\Delta$ is sufficient to conclude that, for any $\zeta>0$,
\begin{align*}
\lim_{(\epsilon+\eta+\Delta)\to0}P\left(\sup_{\theta_1,\theta_2\in\Theta}|U(\theta_2; \{X^{\epsilon,\theta_1}_{t_k}\}_{k=1}^n)- U(\theta_2; \{\bar X^{\theta_1}_{t_k}\}_{k=1}^n))|>\zeta\right)=0.
\end{align*}

To this end, we have that
\begin{align}
U(\theta_2; \{X^{\epsilon,\theta_1, H_{0}}_{t_k}\}_{k=1}^n)- U(\theta_2; \{\bar X^{\theta_1}_{t_k}\}_{k=1}^n))&=\sum_{k=1}^{n}\left(X^{\epsilon,\theta_1, H_{0}}_{t_k}-\bar X^{\theta_1}_{t_k}\right)\left(X^{\epsilon,\theta_1, H_{0}}_{t_k}-2 \bar X^{\theta_2}_{t_k}+\bar X^{\theta_2}_{t_k}\right)\nonumber\\
&=(\sqrt{\epsilon}+\sqrt{\eta})\sum_{k=1}^{n}\frac{X^{\epsilon,\theta_1, H_{0}}_{t_k}-\bar X^{\theta_1}_{t_k}}{\sqrt{\epsilon}+\sqrt{\eta}}\left(X^{\epsilon,\theta_1, H_{0}}_{t_k}-2 \bar X^{\theta_2}_{t_k}+\bar X^{\theta_2}_{t_k}\right).\nonumber
\end{align}

From this we conclude that this term vanishes in $L^{1}$ by Theorem \ref{T:LimitBehavior} together with the assumption that $\frac{\sqrt{\epsilon}+\sqrt{\eta}}{\Delta}\rightarrow 0$. The rest of the proof follows the proof of Theorem \ref{consistencytfe}.
\end{proof}

Let us now consider the asymptotic normality statement in the high-frequency regime. Recall that we stated in Remark \ref{R:LimitingVariance} the limit, as $n\to\infty$, of the limiting variance for a fixed number $n$ of observed data. The same expression is obtained as the variance of a joint limit provided that $\epsilon, \eta, \Delta$ satisfy the same asymptotic relationship as in Theorem \ref{consistencytfe_highFrequency}.
\begin{theorem}\label{normalitytfe_highFrequency} (Asymptotic Normality of the Trajectory-Fitting Estimator  as $(\epsilon+\eta+\Delta)\to0$)
Assume Conditions \ref{c:regularity}, \ref{c:recurrence}-$(\alpha,\beta,\gamma)$ and \ref{first_identifiability_condition}-($n$). Assume also that $T\beta\gamma\sup_{y\in\mathcal{Y}}\|\tau(y)\|^2<\alpha$ so that one may take $p=2$ in \cite[Theorem 1]{fluctuations_fbm_multiscale}. Assume that $\epsilon$, $\eta$, and $\Delta$ all go to zero in such a way that $\frac{\sqrt{\epsilon}+\sqrt{\eta}}{\Delta}\rightarrow 0$. For any $\theta_0\in\Theta$, $H_0\in(1/2, 1)$,  we have that
$\frac{1}{\sqrt{\epsilon}}\left(\hat\theta_{TFE}(\{\xazero_\tnk\}_{k=1}^n)-\theta_0\right)$ converges in distribution as $\epsilon,\eta,\Delta\to 0$ to the normal distribution $\mathcal{N}(0, \bar{M}(\theta_0, H_0))$, where the variance is given by
\begin{align*}
\bar{M}(\theta, H) &:=
\left( \int^T_{t=0} \left[ (\nabla_\theta\Big|_{\theta}\bar X^\cdot_t)^T \nabla_\theta\Big|_{\theta}\bar X^\cdot_t \right] dt \right)^{-1} \times \\
 & \hspace{4pc} \left( \int^T_{\tone=0} \int^T_{\ttwo=0} \left[ \left( \nabla_\theta\Big|_{\theta} \bar X^\cdot_\tone \right)^T E\left[ \xia_\tone(\xia_\ttwo)^T \right] \left( \nabla_\theta\Big|_{\theta} \bar X^\cdot_\ttwo \right) \right] d\ttwo d\tone \right) \hspace{1pc} \times \\
 &\hspace{4pc}  \left( \int^T_{t=0} \left[ (\nabla_\theta\Big|_{\theta}\bar X^\cdot_t)^T \nabla_\theta\Big|_{\theta}\bar X^\cdot_t \right] dt \right)^{-1}.
\end{align*}
\end{theorem}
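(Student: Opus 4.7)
The plan is to mirror the Taylor-expansion argument of Theorem \ref{normalitytfe}, but to introduce a $\Delta$-rescaling so that the finite sums there become Riemann sums that converge to the integrals appearing in $\bar M$. By Taylor's theorem, some $\theta^\dagger$ on the segment between $\hat\theta_{TFE}$ and $\theta_0$ satisfies
\begin{align*}
\frac{1}{\sqrt\epsilon}(\hat\theta_{TFE}-\theta_0) = \left(\Delta\nabla^2_\theta\Big|_{\theta^\dagger}U\right)^{-1}\left(-\frac{\Delta}{\sqrt\epsilon}\nabla_\theta\Big|_{\theta_0}U\right)^T,
\end{align*}
and by Slutsky's theorem it will suffice to establish (i) $\Delta\nabla^2_\theta|_{\theta^\dagger}U$ converges in probability to an invertible deterministic limit, and (ii) $-(\Delta/\sqrt\epsilon)\nabla_\theta|_{\theta_0}U$ converges in distribution to a centered Gaussian.

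For (i), I would start from (\ref{Eq:HessianContrastFcn}). The first summand multiplied by $\Delta$ is a Riemann sum that converges in probability to $2\int_0^T(\nabla_\theta|_{\theta_0}\bar X^\cdot_t)^T\nabla_\theta|_{\theta_0}\bar X^\cdot_t\,dt$, using $\theta^\dagger\to\theta_0$ in probability (Theorem \ref{consistencytfe_highFrequency}), uniform continuity of $\theta\mapsto\nabla_\theta\bar X^\theta_t$, and continuity in $t$ of the integrand. The resulting limit is invertible by Condition \ref{first_identifiability_condition}-$(n)$. The second summand multiplied by $\Delta$ vanishes in probability because it contains a factor $(\xazero_\tnk-\bar X^{\theta^\dagger}_\tnk)$, whose magnitude is uniformly bounded by $\sup_t|\xazero_t-\bar X^{\theta_0}_t|+\sup_t|\bar X^{\theta_0}_t-\bar X^{\theta^\dagger}_t|$; the first term tends to $0$ in $L^2$ by Theorem \ref{T:LimitBehavior}, the second by consistency and continuity of $\theta\mapsto\bar X^\theta$, while the remaining weighted Riemann sum (of second $\theta$-derivatives) is bounded uniformly in $\varepsilon$.

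For (ii), I would start from (\ref{Eq:GradientContrastFcn}):
\begin{align*}
-\frac{\Delta}{\sqrt\epsilon}\nabla_\theta\Big|_{\theta_0}U = 2\Delta\sum_{k=1}^n\frac{1}{\sqrt\epsilon}(\xazero_{\tnk}-\bar X^{\theta_0}_{\tnk})^T\nabla_\theta|_{\theta_0}\bar X^\cdot_{\tnk}.
\end{align*}
I would then define the continuous linear functional $\Psi:C([0,T];\mathcal{X})\to\mathbb{R}^{\dim\Theta}$ by $\Psi(z):=2\int_0^T z_t^T\nabla_\theta|_{\theta_0}\bar X^\cdot_t\,dt$. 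The right-hand side above is a Riemann sum approximating $\Psi$ evaluated at the process $(\xazero-\bar X^{\theta_0})/\sqrt\epsilon$. Theorem \ref{T:LimitBehavior} gives the convergence of this process in distribution on $C([0,T];\mathcal{X})$ to $\xiazero$, and so the continuous mapping theorem yields $\Psi((\xazero-\bar X^{\theta_0})/\sqrt\epsilon)\xrightarrow{\mathscr{D}}\Psi(\xiazero)$; the latter is centered Gaussian (as a continuous linear image of a Gaussian process) with covariance matching four times the middle factor of $\bar M(\theta_0,H_0)$. Combining (i) and (ii) via Slutsky, the factors of $2$ in the gradient and Hessian together with the factor of $4$ in the covariance collapse to produce exactly $\mathcal{N}(0,\bar M(\theta_0,H_0))$.

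The main obstacle will be to justify that the Riemann sum above differs from $\Psi((\xazero-\bar X^{\theta_0})/\sqrt\epsilon)$ by $o_P(1)$ uniformly in $\varepsilon$. The natural bound is $T\cdot\omega_V(\Delta)\cdot\sup_{t\in[0,T]}|(\xazero_t-\bar X^{\theta_0}_t)/\sqrt\epsilon|$, where $\omega_V$ denotes the modulus of continuity of $t\mapsto\nabla_\theta|_{\theta_0}\bar X^\cdot_t$; since $\omega_V(\Delta)\to 0$ deterministically, it suffices to establish tightness of the supremum in $\varepsilon$, which follows from the $L^p$ estimate of Theorem \ref{T:LimitBehavior} combined with continuity of sample paths. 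The asymptotic relation $(\sqrt\epsilon+\sqrt\eta)/\Delta\to 0$ is used, just as in Theorem \ref{consistencytfe_highFrequency}, to control the lower-order remainder terms in (i), but does not enter the leading Gaussian limit in (ii).
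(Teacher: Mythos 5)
Your proposal is correct and follows essentially the same route as the paper: a Taylor expansion with the gradient rescaled by $\Delta/\sqrt\epsilon$ and the Hessian by $\Delta$, convergence of the rescaled Hessian to $2\int_0^T(\nabla_\theta\big|_{\theta_0}\bar X^\cdot_t)^T\nabla_\theta\big|_{\theta_0}\bar X^\cdot_t\,dt$ via consistency and uniform bounds, and convergence of the rescaled gradient to $2\int_0^T(\xi^{\theta_0,H_0}_t)^T\nabla_\theta\big|_{\theta_0}\bar X^\cdot_t\,dt$ from the functional fluctuation limit of Theorem \ref{T:LimitBehavior}. Your write-up simply makes explicit the continuous-mapping and Riemann-sum-error steps that the paper leaves implicit.
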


\begin{proof}[Proof of Theorem \ref{normalitytfe_highFrequency}]
The proof of this theorem follows closely that of Theorem
\ref{normalitytfe} with the additional element of accounting for the
limit as $\Delta\to 0$. Below, we only comment on the differences. In particular, by (\ref{Eq:GradientContrastFcn}) we have that
\begin{align*}
-\frac{\Delta}{\sqrt\epsilon}\nabla_\theta\Big|_{\theta_0}U &= 2 \Delta\Sigma_{k=1}^n \left[ \frac{1}{\sqrt\epsilon} (\xazero_\tnk - \bar X^{\theta_0}_\tnk)^T \nabla_\theta\Big|_{\theta_0} \bar X^\cdot_\tnk \right],
\end{align*}
which converges in distribution to $ 2 \int_{0}^{T} \left[ (\xi^{\theta_0,H_0}_t)^T \nabla_\theta\Big|_{\theta_0} \bar X^\cdot_t \right]dt$ by \cite[Theorem 2]{fluctuations_fbm_multiscale}, where, as before, $\xia$ is the limit of the fluctuations,  given for $t\in[0,T]$ by the mixed stochastic integral
\begin{align}
\xia_t &= \lambda \int^t_0 Z^\theta(t, s) \Sigma_\Phi^\theta(\bar X_s)  d \tilde B_s + \int^t_0 Z^\theta(t, s) \bar \sigma d \tilde W^H_s.\nonumber
\end{align}

By (\ref{Eq:HessianContrastFcn}) we have that
\begin{align*}
\Delta\nabla^2_\theta\Big|_{\theta^\dagger}U &= 2 \Delta\Sigma_{k=1}^n \left[ (\nabla_\theta\Big|_{\theta^\dagger} \bar X^\cdot_\tnk)^T \nabla_\theta\Big|_{\theta^\dagger} \bar X^\cdot_\tnk \right] - 2 \Delta\Sigma_{k=1}^n \nabla_\theta\Big|_{\theta^\dagger}\left( (\xazero_\tnk - \bar X^{\theta^\dagger}_\tnk)^T \nabla_\theta\Big|_{\theta}\bar X^\cdot_\tnk \right)^T,
\end{align*}
which converges in probability to $2 \int_{0}^{T} \left[
  (\nabla_\theta\Big|_{\theta_0}\bar X^\cdot_t)^T
  \nabla_\theta\Big|_{\theta_0}\bar X^\cdot_t \right]dt$. To see that
the second term vanishes, we used the fact that  $\hat\theta_{TFE}$
converges in probability to $\theta_0$ by Theorem
\ref{consistencytfe_highFrequency}, that $\xazero_t$ converges in
probability to $\bar X^{\theta_0}_t$ uniformly in $t\in[0, T]$, and
that $\nabla^2_\theta\Big|_\theta \bar X^\cdot_t$ is bounded uniformly
in $\theta\in\Theta$ and $t\in[0, T]$. The rest of the proof follows that of Theorem \ref{normalitytfe}.
\end{proof}

\section{A Contrast Estimator Based on the Likelihood of an Approximate Model}\label{S:DriftEstimationMLE}

In this section we present an alternative estimator for the unknown parameter $\theta$ which is based on the principle of maximum likelihood applied to an approximate model motivated by the fluctuations approximation given precisely in Theorem \ref{T:LimitBehavior}. The advantage of this estimator is that it has smaller asymptotic variance than the one presented in Section \ref{S:TFE1}, at least when the Hurst index is known, as we demonstrate in the proof of Theorem \ref{T:VarianceComparisons}. One disadvantage, however, which is clear from the formulation (\ref{Eq:MCE}), is that it is computationally challenging to implment, in that one must invert a matrix which is quite large in typical cases. Nevertheless this estimator is of theoretical interest as well as practical interest when the computational challenges can be met. Also, note that the construction of this estimator involves a parameter which is ideally chosen to coincide with the Hurst index. However, it turns out that one has consistency and asymptotic normality even when the chosen parameter and the Hurst index do not coincide.

Recall that \cite[Theorem 2]{fluctuations_fbm_multiscale} states that the fluctuations process $\frac{1}{\sqrt\epsilon}\left(\xa - \bar X^\theta\right)$ converges in distribution to $\xia$, where for $t\in[0,T]$,
\begin{align*}
\xia_t &= \lambda \int^t_0 Z^\theta(t, s) \Sigma_\Phi^\theta d \tilde B_s + \int^t_0 Z^\theta(t, s) \bar \sigma d \tilde W^H_s,
\end{align*}
where $Z^\theta$ is as in Definition \ref{zdefinition}, $\tilde W^H$ is a fractional Brownian motion with Hurst parameter $H$, and $\tilde B$ is a standard Brownian motion independent from $\tilde W^H$.

Let $\oplus$ denote the vector concatenation sum and $\otimes$ the vector outer product. For each $n\in\mathbb{N}$, $\xian:=\oplus_{k=1}^n \xia_\tnk$ is a centered Gaussian vector with covariance matrix $\Xian:=E\left[\xian \otimes \xian\right]$.

The convergence in distribution of the fluctuations in \cite[Theorem 2]{fluctuations_fbm_multiscale} may be understood to mean that, in an appropriate sense, the vector of observations $\oplus_{k=1}^n \xa_\tnk$ is asymptotically Gaussian with mean $\oplus_{k=1}^n \bar X^{\theta}_\tnk$ and covariance matrix $\epsilon \hspace{0.1pc} \Xian$.

The form of the likelihood for this approximation suggests that it is reasonable to estimate $\theta$ by minimizing the contrast function
\begin{align*}
\tilde U^\epsilon(\theta; H, \{x_\tnk\}_{k=1}^n) := \epsilon \log \det ( \Xian ) + \left( \oplus_{k=1}^n ( x_\tnk - \bar X^\theta_\tnk ) \right)^T \left( \Xian \right)^{-1} \left( \oplus_{k=1}^n ( x_\tnk - \bar X^\theta_\tnk ) \right)
\end{align*}
or more simply
\begin{align*}
\tilde U(\theta; H, \{x_\tnk\}_{k=1}^n) := \left( \oplus_{k=1}^n ( x_\tnk - \bar X^\theta_\tnk ) \right)^T \left( \Xian \right)^{-1} \left( \oplus_{k=1}^n ( x_\tnk - \bar X^\theta_\tnk ) \right).
\end{align*}

We define the Minimum Contrast Estimator with parameter $\mathcal{H}$
\begin{align}
\hat\theta^{\mathcal{H}}_{MCE}\left( \{x_\tnk\}_{k=1}^n \right) &:= \arg\min_{\theta\in\bar\Theta} \tilde U(\theta; \mathcal{H}, \{x_\tnk\}_{k=1}^n).\label{Eq:MCE}
\end{align}

The estimator turns out to be consistent and asymptotically normal even if $\mathcal{H} \neq H_0$, i.e., even if the Hurst index is not correctly specified.
\begin{theorem} (Consistency of the Minimum Contrast Estimator)
Let $n\in\mathbb{N}$ be given. Assume Conditions \ref{c:regularity}, \ref{c:recurrence}-$(\alpha, \beta, \gamma)$, and \ref{first_identifiability_condition}-($n$). Assume also that $T\beta\gamma\sup_{y\in\mathcal{Y}}\|\tau(y)\|^2<\alpha$ so that one may take $p=2$ in \cite[Theorem 1]{fluctuations_fbm_multiscale}. For any $\theta_0\in\Theta$, $(H_0, \mathcal{H})\in(1/2, 1)^2$, and $\zeta>0$,
\begin{align*}
\lim_{\varepsilon\to0}P\left(|\hat\theta^\mathcal{H}_{MCE}(\{\xazero_\tnk\}_{k=1}^n)-\theta_0|>\zeta\right)=0.
\end{align*}
\end{theorem}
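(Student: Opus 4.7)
The plan is to mirror the modulus-of-continuity argument used to prove Theorem \ref{consistencytfe}. Introduce
\[
\tilde w(\phi; \{x_{\tnk}\}_{k=1}^n) := \sup_{(\theta_1, \theta_2) \in \Theta^2,\ |\theta_1 - \theta_2| \leq \phi} \left| \tilde U(\theta_1; \mathcal{H}, \{x_{\tnk}\}_{k=1}^n) - \tilde U(\theta_2; \mathcal{H}, \{x_{\tnk}\}_{k=1}^n) \right|
\]
for $\phi > 0$, and reduce the consistency statement to the combination of (i) uniform-in-$\theta$ convergence in probability of the random contrast function and modulus to their deterministic counterparts, and (ii) the fact that the deterministic limit $\theta \mapsto \tilde U(\theta; \mathcal{H}, \{\bar X^{\theta_0}_{\tnk}\}_{k=1}^n)$ attains its unique minimum on $\bar\Theta$ at $\theta = \theta_0$. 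Given (i) and (ii), the appeal to \cite[Theorem 3.2.8]{dc:probability} already used in the proof of Theorem \ref{consistencytfe} delivers the claim.

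For (i), Theorem \ref{T:LimitBehavior} (with $p = 2$, available under the stated hypothesis) gives $\xazero_{\tnk} - \bar X^{\theta_0}_{\tnk} \to 0$ in $L^2(\Omega)$ for each $k$ as $\varepsilon \to 0$. The maps $\theta \mapsto \bar X^\theta_{\tnk}$ and $\theta \mapsto \left( \Xi^{\theta, \mathcal{H}}_{(n)} \right)^{-1}$ are continuous on the compact set $\bar \Theta$ (the latter provided $\Xi^{\theta, \mathcal{H}}_{(n)}$ is uniformly positive definite in $\theta$; see below). Expanding the quadratic form $\tilde U(\theta; \mathcal{H}, \cdot)$ about $\{\bar X^{\theta_0}_{\tnk}\}_{k=1}^n$ and applying the Cauchy-Schwarz and triangle inequalities then yields
\[
\sup_{\theta \in \bar\Theta} \left| \tilde U(\theta; \mathcal{H}, \{\xazero_{\tnk}\}_{k=1}^n) - \tilde U(\theta; \mathcal{H}, \{\bar X^{\theta_0}_{\tnk}\}_{k=1}^n) \right| \xrightarrow{prob.} 0
\]
as $\varepsilon \to 0$, and an analogous conclusion for $\tilde w$.

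For (ii), the deterministic limit
\[
\tilde U(\theta; \mathcal{H}, \{\bar X^{\theta_0}_{\tnk}\}_{k=1}^n) = \left( \oplus_{k=1}^n (\bar X^{\theta_0}_{\tnk} - \bar X^\theta_{\tnk}) \right)^T \left( \Xi^{\theta, \mathcal{H}}_{(n)} \right)^{-1} \left( \oplus_{k=1}^n (\bar X^{\theta_0}_{\tnk} - \bar X^\theta_{\tnk}) \right)
\]
is nonnegative and vanishes iff the residual vector $\oplus_{k=1}^n (\bar X^{\theta_0}_{\tnk} - \bar X^\theta_{\tnk})$ equals zero; by Condition \ref{first_identifiability_condition}-($n$), this forces $\theta = \theta_0$, so $\theta_0$ is indeed the unique minimizer on $\bar\Theta$.

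The main technical obstacle will be verifying the uniform positive definiteness of $\Xi^{\theta, \mathcal{H}}_{(n)}$ on $\bar \Theta$, which is needed both to make the contrast function well-defined and to push the convergence through. This should follow from the structure of $\xia$ recorded in Theorem \ref{T:LimitBehavior}: the $\tilde W^H$-piece contributes $\bar\sigma \bar\sigma^T$ modulated by the invertible evolution matrices $Z^\theta(\tnk, \cdot)$ of Definition \ref{zdefinition} and by the positive-definite Gram matrix of the fBm at the distinct sample times $\{\tnk\}_{k=1}^n$. The nondegeneracy of $\bar\sigma \bar\sigma^T$ is inherited from the uniform nondegeneracy of $\sigma \sigma^T$ in Condition \ref{c:regularity}; continuity of $\theta \mapsto Z^\theta(t, s)$ together with compactness of $\bar \Theta$ then give a uniform lower bound on the smallest eigenvalue of $\Xi^{\theta, \mathcal{H}}_{(n)}$. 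When $\lambda > 0$ the additional $\tilde B$-piece only helps; when $\lambda = 0$ the fBm contribution alone is sufficient.
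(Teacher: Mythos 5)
Your proposal is correct and takes essentially the same route as the paper: the paper's proof is the same modulus-of-continuity argument used for Theorem \ref{consistencytfe}, invoking \cite[Theorem 1]{fluctuations_fbm_multiscale} for the uniform-in-$\theta$ convergence in probability, continuity of $\theta\mapsto\{\bar X^\theta_\tnk\}_{k=1}^n$, Condition \ref{first_identifiability_condition}-($n$) for uniqueness of the minimizer, and \cite[Theorem 3.2.8]{dc:probability}. The only caution concerns your added well-posedness discussion: nondegeneracy of $\bar\sigma\bar\sigma^T$ is not automatically inherited from the uniform nondegeneracy of $\sigma\sigma^T$ (a $\mu$-average of nondegenerate matrices can be singular), so the uniform positive definiteness of $\Xi^{\theta,\mathcal{H}}_{(n)}$ rests on an implicit assumption that the paper itself also leaves unstated rather than on Condition \ref{c:regularity} alone.
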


\noindent\textit{Proof.} Consider the modulus of continuity
\begin{align*}
\tilde w(\phi; \mathcal{H}, \{x_\tnk\}_{k=1}^n):=\sup_{(\theta_1, \theta_2)\in\Theta^2;|\theta_1-\theta_2|\leq\phi}|\tilde U(\theta_1; \mathcal{H}, \{x_\tnk\}_{k=1}^n)-U(\theta_2; \mathcal{H}, \{x_\tnk\}_{k=1}^n)|
\end{align*}
defined for $\phi>0$. It is clear from \cite[Theorem 1]{fluctuations_fbm_multiscale} that as $\varepsilon\to0$, $\tilde w(\phi; \mathcal{H}, \{X^{\varepsilon,\theta,H_0}_\tnk\}_{k=1}^n)$ converges in probability uniformly in $\theta\in\bar\Theta$ and $\phi>0$ to $\tilde w(\phi; \mathcal{H}, \{\bar X^\theta_\tnk\}_{k=1}^n)$. It is also clear by continuity of $\theta \mapsto \{\bar X^\theta_\tnk\}_{k=1}^n$ and the triangle inquality that $\lim_{\phi\to0}\sup_{\theta\in\bar\Theta} \tilde w(\phi; \mathcal{H}, \{\bar X^\theta_\tnk\}_{k=1}^n) = 0$. The claim of the theorem follows by \cite[Theorem 3.2.8]{dc:probability}.

\qed

\begin{theorem} \label{normalitymce} (Asymptotic Normality of the Minimum-Contrast Estimator)

Let $n\in\mathbb{N}$ be given. Assume Conditions \ref{c:regularity}, \ref{c:recurrence}-$(\alpha, \beta, \gamma)$, and \ref{first_identifiability_condition}-($n$). Assume also that $T\beta\gamma\sup_{y\in\mathcal{Y}}\|\tau(y)\|^2<\alpha$ so that one may take $p=2$ in \cite[Theorem 1]{fluctuations_fbm_multiscale}. For any $\theta_0\in\Theta$, $H_0\in(1/2, 1)$, and $\mathcal{H}\in(1/2,1)$, we have that
$\frac{1}{\sqrt{\epsilon}}\left(\hat\theta^{\mathcal{H}}_{MCE}(\{x_\tnk\}_{k=1}^n)-\theta_0\right)$ converges in distribution as $\varepsilon := (\epsilon, \eta) \to 0$ to the normal distribution $\mathcal{N}(0, M^\mathcal{H}(\theta_0, H_0; n))$, where the variance is given by
\begin{align*}
M^{\mathcal{H}}(\theta, H; n)
 &:= \left(\left( \oplus_{k=1}^n \nabla_\theta\Big|_{\theta_0} \bar X^\cdot_\tnk \right)^T \left( \XianzerocurlyH \right)^{-1} \left( \oplus_{k=1}^n \nabla_\theta\Big|_{\theta_0} \bar X^\cdot_\tnk \right)\right)^{-1} \times \nonumber\\
 & \hspace{4pc} \left( \oplus_{k=1}^n \nabla_\theta\Big|_{\theta_0} \bar X^\cdot_\tnk \right)^T \XianzerocurlyH \left( \Xianzero \right)^{-1} \XianzerocurlyH \left( \oplus_{k=1}^n \nabla_\theta\Big|_{\theta_0} \bar X^\cdot_\tnk \right) \hspace{1pc} \times \nonumber\\
 &\hspace{4pc}  \left(\left( \oplus_{k=1}^n \nabla_\theta\Big|_{\theta_0} \bar X^\cdot_\tnk \right)^T \left( \XianzerocurlyH \right)^{-1} \left( \oplus_{k=1}^n \nabla_\theta\Big|_{\theta_0} \bar X^\cdot_\tnk \right)\right)^{-1}.
\end{align*}
\end{theorem}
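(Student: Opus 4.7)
The plan is to mirror the Taylor-expansion strategy from the proof of Theorem~\ref{normalitytfe}, now with the additional complication that the weighting matrix $A(\theta) := \Xi^{\theta,\mathcal{H}}_{(n)}$ in the contrast $\tilde U$ itself depends on $\theta$. Starting from the first-order condition $\nabla_\theta \tilde U|_{\hat\theta^\mathcal{H}_{MCE}} = 0$ and a mean-value expansion about $\theta_0$, one obtains
\[
\tfrac{1}{\sqrt\epsilon}\bigl(\hat\theta^\mathcal{H}_{MCE} - \theta_0\bigr) \;=\; \bigl(\nabla^2_\theta\tilde U|_{\theta^\dagger}\bigr)^{-1}\Bigl(-\tfrac{1}{\sqrt\epsilon}\nabla_\theta\tilde U|_{\theta_0}\Bigr)^T
\]
for some $\theta^\dagger$ on the segment joining $\hat\theta^\mathcal{H}_{MCE}$ and $\theta_0$, once invertibility of the Hessian is verified. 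I would then analyze the rescaled gradient and the Hessian separately and conclude via Slutsky's theorem.

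Set $G := \oplus_{k=1}^n \nabla_\theta|_{\theta_0}\bar X^\cdot_\tnk$ and $f(\theta) := \oplus_{k=1}^n(\xazero_\tnk - \bar X^\theta_\tnk)$. Direct differentiation of $\tilde U(\theta;\mathcal{H},\{\xazero_\tnk\}) = f(\theta)^T A(\theta)^{-1} f(\theta)$ at $\theta_0$ produces a leading term $-2\,G^T \XianzerocurlyH^{-1} f(\theta_0)$ plus a remainder quadratic in $f(\theta_0)$ arising from $\theta$-derivatives of $A^{-1}$. By Theorem~\ref{T:LimitBehavior} (Theorem~2 of \cite{fluctuations_fbm_multiscale}) and the continuous mapping theorem, $f(\theta_0)/\sqrt\epsilon \xrightarrow{\mathscr D} \xianzero \sim \mathcal{N}(0,\Xianzero)$; since $f(\theta_0) = O_p(\sqrt\epsilon)$, the quadratic remainder is $O_p(\epsilon) = o_p(\sqrt\epsilon)$, so after rescaling it vanishes and
\[
-\tfrac{1}{\sqrt\epsilon}\nabla_\theta|_{\theta_0}\tilde U \;\xrightarrow{\mathscr D}\; 2\,G^T \XianzerocurlyH^{-1}\xianzero \;\sim\; \mathcal{N}\bigl(0,\ 4\,G^T\XianzerocurlyH^{-1}\Xianzero\XianzerocurlyH^{-1} G\bigr).
\]

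For the Hessian, the second differentiation yields a dominant piece $2\,G_{\theta^\dagger}^T A(\theta^\dagger)^{-1} G_{\theta^\dagger}$ together with remainders each carrying at least one factor of $f(\theta^\dagger)$ or a derivative of $A^{-1}$ paired against $f$. By the consistency statement just preceding this theorem, $\theta^\dagger \xrightarrow{P} \theta_0$; Theorem~\ref{T:LimitBehavior} gives $f(\theta^\dagger) \xrightarrow{P} 0$; and the smoothness in $\theta$ guaranteed by Condition~\ref{c:regularity} together with boundedness of $\Theta$ yields $\nabla^2_\theta\tilde U|_{\theta^\dagger} \xrightarrow{P} 2\,G^T\XianzerocurlyH^{-1} G$, whose invertibility is ensured (as in Theorem~\ref{normalitytfe}) by Condition~\ref{first_identifiability_condition}-($n$). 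Slutsky's theorem then gives
\[
\tfrac{1}{\sqrt\epsilon}\bigl(\hat\theta^\mathcal{H}_{MCE} - \theta_0\bigr) \;\xrightarrow{\mathscr D}\; \bigl(G^T\XianzerocurlyH^{-1} G\bigr)^{-1} G^T\XianzerocurlyH^{-1}\xianzero,
\]
a centered Gaussian with the claimed asymptotic covariance $M^\mathcal{H}(\theta_0,H_0;n)$. In particular, when $\mathcal{H}=H_0$ the formula collapses to the efficient form $(G^T \Xianzero^{-1} G)^{-1}$, consistent with a quasi-MLE interpretation.

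The main technical obstacle I anticipate is carefully controlling the $\theta$-dependence of $A(\theta) = \Xi^{\theta,\mathcal{H}}_{(n)}$: both the quadratic remainder in the gradient (after rescaling) and the non-leading Hessian terms must be shown to vanish in the appropriate probabilistic sense. This relies on joint smoothness in $\theta$ of the limiting fluctuation process $\xia$, evident from the explicit representation involving $Z^\theta(t,s)$ and $\Sigma^\theta_\Phi$, and on the uniform-in-$\theta$ moment bounds underlying Theorem~\ref{T:LimitBehavior}. Invertibility of the limiting Hessian through the identifiability condition is a standard but necessary additional check.
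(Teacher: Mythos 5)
Your proposal is correct and follows essentially the same route as the paper: the paper likewise Taylor-expands the first-order condition about $\theta_0$, isolates the leading gradient term $2\bigl(\tfrac{1}{\sqrt\epsilon}\oplus_{k=1}^n(\xazero_\tnk-\bar X^{\theta_0}_\tnk)\bigr)^T\bigl(\XianzerocurlyH\bigr)^{-1}\bigl(\oplus_{k=1}^n\nabla_\theta\big|_{\theta_0}\bar X^\cdot_\tnk\bigr)$ and the leading Hessian term $2\,G^T\bigl(\XianzerocurlyH\bigr)^{-1}G$, collects the contributions from differentiating the $\theta$-dependent weighting matrix into remainders $\mathcal{R}_I$, $\mathcal{R}_{II}$ shown to vanish in probability (exactly your quadratic-in-$f$ argument), and concludes as in Theorem \ref{normalitytfe}. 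No gaps beyond the level of detail the paper itself leaves to the reader.
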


\noindent\textit{Proof.} Let us suppress the data $\{X^{\epsilon, \theta_0, H_0}_\tnk\}_{k=1}^n$ and the parameter $\mathcal{H}$ in the estimator. The argument is nearly identical to that of the proof of Theorem \ref{normalitytfe} and so we provide a sketch only.

\begin{align*}
-\frac{1}{\sqrt\epsilon}\nabla_\theta\Big|_{\theta_0} \tilde U &= 2 \left( \frac{1}{\sqrt\epsilon} \oplus_{k=1}^n \left( \xazero_\tnk - \bar X^{\theta_0}_\tnk \right) \right)^T \left( \XianzerocurlyH \right)^{-1} \left( \oplus_{k=1}^n \nabla_\theta\Big|_{\theta_0} \bar X^\cdot_\tnk \right) + \mathcal{R}_I ,
\end{align*}
where the first term converges in distribution to $2 \left( \xianzero \right)^T \left( \XianzerocurlyH \right)^{-1} \left( \oplus_{k=1}^n \nabla_\theta\Big|_{\theta_0} \bar X^\cdot_\tnk \right)$ and $\mathcal{R}_I$ converges in probability to $0$.

\begin{align*}
\nabla^2_\theta\Big|_{\theta^\dagger} \tilde U &= 2 \left( \oplus_{k=1}^n \nabla_\theta\Big|_{\theta^\dagger} \bar X^\cdot_\tnk \right)^T \left( \XianthetadaggermathcalH \right)^{-1} \left( \oplus_{k=1}^n \nabla_\theta\Big|_{\theta_0} \bar X^\cdot_\tnk \right) + \mathcal{R}_{II} ,
\end{align*}
where the first term converges in probability to $2 \left( \oplus_{k=1}^n \nabla_\theta\Big|_{\theta_0} \bar X^\cdot_\tnk \right)^T \left( \XianzerocurlyH \right)^{-1} \left( \oplus_{k=1}^n \nabla_\theta\Big|_{\theta_0} \bar X^\cdot_\tnk \right)$ and $\mathcal{R}_{II}$ converges in probability to $0$.

The claim follows as in the proof of Theorem \ref{normalitytfe}.

\qed

In the next theorem we compare the limiting variance of the TFE studied in Section \ref{S:TFE1} with that of the estimator studied in this section. As we shall see, the estimator studied in this section has smaller limiting variance, at least when the Hurst index is known. However, as we have mentioned, it is considerably more computationally challenging to implement than is the TFE.
\begin{theorem}(Comparing the Asymptotic Variances)\label{T:VarianceComparisons}
For any $\theta\in\Theta$, $H\in(1/2,1)$, and $n\in\mathbb{N}$, $M(\theta,H;n)-M^H(\theta,H;n)$ is positive semidefinite, or what is equivalent, $(M(\theta,H;n))^{-1}-(M^H(\theta,H;n))^{-1}$ is negative semidefinite.
\end{theorem}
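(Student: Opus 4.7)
The plan is to recognize this as the classical Gauss--Markov (Aitken) inequality for generalized least squares, applied to the limiting Gaussian regression dictated by Theorem~\ref{T:LimitBehavior}. First I would introduce compact notation: let
\[
G := \oplus_{k=1}^n \nabla_\theta\Big|_\theta \bar X^\cdot_{t^n_k}
\]
denote the $(mn) \times p$ matrix of stacked Jacobians (with $p := \dim \Theta$), and let $\Sigma := \Xi^{\theta, H}_{(n)} = E[\xi^{\theta,H}_{(n)} \otimes \xi^{\theta,H}_{(n)}]$ denote the covariance of the limit fluctuation vector. In this notation, the TFE limiting variance of Theorem~\ref{normalitytfe} reads
\[
M(\theta, H; n) = (G^T G)^{-1} G^T \Sigma G (G^T G)^{-1},
\]
and the MCE limiting variance of Theorem~\ref{normalitymce} with the chosen parameter $\mathcal{H} = H$ collapses (since $\Xi^{\theta, \mathcal{H}}_{(n)} = \Xi^{\theta, H}_{(n)} = \Sigma$, so the inner sandwich $(\Xi^{\mathcal{H}})^{-1} \Sigma (\Xi^{\mathcal{H}})^{-1}$ reduces to $\Sigma^{-1}$) to
\[
M^H(\theta, H; n) = (G^T \Sigma^{-1} G)^{-1}.
\]
Positive definiteness of $\Sigma$ follows from the representation of $\xi^{\theta, H}$ as a mixed standard-plus-fractional stochastic integral with nondegenerate drivers, and full column rank of $G$ is inherited from the identifiability Condition~\ref{first_identifiability_condition}-($n$), so both $M$ and $M^H$ are positive definite and invertible.

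Next, I would carry out the Aitken--Gauss--Markov step at the matrix level. Introduce the two left inverses of $G$,
\[
A := (G^T G)^{-1} G^T, \qquad B := (G^T \Sigma^{-1} G)^{-1} G^T \Sigma^{-1},
\]
both satisfying $A G = B G = I_p$. The key identity is
\[
A \Sigma B^T = A G (G^T \Sigma^{-1} G)^{-1} = (G^T \Sigma^{-1} G)^{-1} = B \Sigma B^T,
\]
and by symmetry of $\Sigma$ likewise $B \Sigma A^T = B \Sigma B^T$. Since $(A - B) \Sigma (A - B)^T$ is manifestly positive semidefinite as a $\Sigma$-Gram matrix, expanding the square gives
\[
0 \preceq (A - B) \Sigma (A - B)^T = A \Sigma A^T - B \Sigma B^T = M(\theta, H; n) - M^H(\theta, H; n),
\]
which is the claim.

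The equivalent formulation in terms of inverses is then immediate from the operator antitonicity of the inverse on the positive definite cone: since $M \succeq M^H \succ 0$, one has $(M^H)^{-1} \succeq M^{-1}$, hence $M^{-1} - (M^H)^{-1}$ is negative semidefinite. I do not anticipate any substantial obstacle here: once the two limiting variances are placed in the $(G, \Sigma)$-framework, the conclusion is a two-line algebraic identity of Gauss--Markov type. The only items meriting brief verification outside the algebra are the collapse of the MCE variance to $(G^T \Sigma^{-1} G)^{-1}$ at $\mathcal{H} = H$ and the invertibility of $\Sigma$ and $G^T G$, both of which are immediate from the structure of the limit process and the identifiability hypothesis.
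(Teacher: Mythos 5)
Your proof is correct, but it takes a different route from the paper's. You establish $M(\theta,H;n)-M^H(\theta,H;n)\succeq 0$ directly via the classical Gauss--Markov/Aitken expansion: with the two left inverses $A=(G^TG)^{-1}G^T$ and $B=(G^T\Sigma^{-1}G)^{-1}G^T\Sigma^{-1}$ of $G$, the cross terms $A\Sigma B^T=B\Sigma A^T=B\Sigma B^T$ cancel, so the difference of variances is the Gram matrix $(A-B)\Sigma(A-B)^T\succeq 0$, and the inverse-form statement then follows from antitonicity of inversion on the positive definite cone. The paper instead proves the inverse-form inequality directly: writing $B=\Sigma^{1/2}$ (symmetric square root), it inserts $B^{-1}B$ into the quadratic form $\langle v,(M)^{-1}v\rangle$ and applies Cauchy--Schwarz to obtain $\langle v,(M)^{-1}v\rangle^2\le\langle v,(M^H)^{-1}v\rangle\,\langle v,(M)^{-1}v\rangle$, whence $(M)^{-1}\preceq(M^H)^{-1}$, and the non-inverted form is taken as the equivalent statement. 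Your route avoids the square root of $\Sigma$ and yields the covariance-form inequality without any division argument; the paper's route works directly on the inverses as stated and needs no left-inverse identity. Both arguments rest on the same implicit nondegeneracy assumptions ($\Sigma\succ 0$ and $G$ of full column rank, which the theorem presupposes by writing $(M)^{-1}$ and $(M^H)^{-1}$); your remark that full column rank of $G$ follows from Condition \ref{first_identifiability_condition}-($n$) is slightly glib, since injectivity of $\theta\mapsto\{\bar X^\theta_\tnk\}_{k=1}^n$ does not by itself force injectivity of its differential, but this is no worse than what the paper itself assumes. Finally, your reading of the MCE variance at $\mathcal{H}=H$, namely $M^H=(G^T\Sigma^{-1}G)^{-1}$, is exactly what the paper's own proof uses (the displayed sandwich in Theorem \ref{normalitymce} appears to carry a typo in the placement of the inverses), so your collapse step is consistent with the intended formula.
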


\noindent\textit{Proof.} We will show that the difference of inverses is negative semidefinite. Let us begin by writing $A$ for $\nabla_\theta\Big|_{\theta_0} \bar X^\cdot_\tnk$ and $B$ for a symmetric square root of $\Xian$.

In this notation, for any vector $v$ in the domain,
\begin{align*}
\langle v, (M(\theta,H;n))^{-1} v \rangle^2 &= \langle v, A^T A (A^T B^2 A)^{-1} A^T A v \rangle^2 \nonumber \\
&= \langle B^{-1} A v, B A (A^T B^2 A)^{-1} A^T A v \rangle^2 \nonumber \\
&\leq \langle B^{-1} A v, B^{-1} A v \rangle \langle B A (A^T B^2 A)^{-1} A^T A v, B A (A^T B^2 A)^{-1} A^T A v \rangle \nonumber \\
& = \langle v, (M^H(\theta,H;n))^{-1} v \rangle \langle v, (M(\theta,H;n))^{-1} v \rangle. \nonumber
\end{align*}

The claim follows immediately.

\qed

\section{Numerical Examples}\label{S:NumericalExamples}

We now present data from numerical simulations to illustrate the theory. In Subsection \ref{SS:ConstantVolExample}, we consider a model in which $\sigma \equiv 1$, i.e., the diffusion coefficient of the slow component is constant, while in Subsection \ref{SS:VariableVolExample}, we consider a model with multiplicative noise in the slow component, i.e., $\sigma$ is nonconstant. We have constructed the models so as to exhibit the same behavior in the slow component, to two asymptotic orders, as $\varepsilon := (\epsilon,\eta) \to 0$. In addition, the diffusion coefficient $\sigma$ and fast component $Y$ in the second model are such that the averaged diffusion coefficient is precisely $\bar \sigma \equiv 1$, in agreement with the constant value of $\sigma$ in the first example. This design perhaps facilitates a comparative analysis of the two examples.

Before presenting the models and their statistical analysis, let us collect here some of the more high-level conclusions.
\begin{itemize}
\item{Estimation of $\theta$ is stable across all values of $n$ (even for small values). Moreover, as one would expect, data corresponding to smaller values of $\epsilon$ result in more accurate estimates of $\theta$.}
\item{When $\sigma$ is constant, both estimators for the Hurst parameter $H$, $\hat{H}^\epsilon_1$ and $\hat{H}^\epsilon_2$, work equally well as far as the point estimates are concerned, but as expected $\hat{H}^\epsilon_1$ has smaller variance than $\hat{H}^\epsilon_2$.}
\item{If $\sigma$ is variable, then $\hat{H}^\epsilon_2$ is in general more reliable than $\hat{H}^\epsilon_1$. A likely reason for this behavior lies in the fact that the construction of $\hat{H}^\epsilon_1$ uses the the averaged $\bar{\sigma}$ whereas the actual data are of course generated with the prelimit $\sigma$. Note that the estimator $\hat{H}^\epsilon_2$ does not require knowledge of $\bar{\sigma}$. In connection with this, it is also worth pointing out that both Theorems \ref{T:ConvergenceForHatH1}, for $\hat{H}^\epsilon_1$, and \ref{T:ConvergenceForHatH2}, for $\hat{H}_2$, make assumptions about the relationships among $\epsilon,\eta,n$. It turns out that in practice there is some tension between asking for $\eta$ to be small (so that the fast dynamics behave ergodically) and at the same time hoping to observe convergence of the estimators (one must ensure that the number of samples $n$ is commensurately large).}
\end{itemize}
Let us now proceed with our two examples.
\subsection{Constant-$\sigma$ Model}\label{SS:ConstantVolExample}
We begin by considering the constant-$\sigma$ system
\begin{equation}
\begin{cases}
d \xe_t = \theta_0 \xe_t \ye_t \ye_t dt + \sqrt\epsilon dW^{H_0}_t \label{Eq:ConstantSigmaModel} \\
d \ye_t = - \frac{1}{\eta} \ye_t + \frac{1}{\sqrt\eta} dB_t
\end{cases}
\end{equation}
for $t\in[0,T=1]$ with $(\xe_0, \ye_0) = (1, 0) \in \mathbb{R}^2$.

The limit $\bar X$ of the slow process $\xe$ in (\ref{Eq:ConstantSigmaModel}) is given by $\bar X_t = e^{\frac{t\theta_0}{2}}$ and, of course, $\bar\sigma = 1$.

We fix $\theta_0 = 1$ and $H_0 = 0.85$ and consider the estimators $\hat \theta_{TFE}$, $\hat{H}^\epsilon_1$, and $\hat{H}_2$ defined in equations (\ref{Eq:TFEDefinition}), (\ref{Eq:HatHEpsilonOneDefinition}), and (\ref{Eq:HatHTwoDefinition}) respectively. For each estimator, for $\epsilon=0.1$ and $\epsilon=0.01$, and for selected values of $\eta$ and $n$ (or $2n$ in the case of $\hat{H}_2$), we perform $10,000$ simulations. The slow and fast trajectories are simulated according to an Euler-Maruyama scheme with $10^6$ evenly-spaced discrete time steps.

Tables \ref{constant_sigma_model_epsilon_0.1_trajectory_fitting_estimates_means}-\ref{constant_sigma_model_epsilon_0.01_second_type_hurst_estimates_stds} present the empirical means and standard deviations. 

Let us start with the estimator for $\theta$.
\begin{table}[H]
\begin{center}
\begin{tabular}{|c||c|c|c|c|c|}
\hline
* & $n = 10^6$ & $n=10^5$ & $n = 10^4$ & $n = 10^3$ & $n = 10^2$ \\
\hline
\hline
$\eta = 0.01$ & 0.93566 & 0.9368 & 0.9223 & 0.939 & 0.9309 \\
\hline
$\eta = 0.001$ & 0.93834 & 0.93437 & 0.93605 & 0.94778 & 0.94202 \\
\hline
$\eta = 0.0001$ & 0.94638 & 0.94862 & 0.94215 & 0.94506 & 0.94895 \\
\hline
\end{tabular}
\caption{Means of $\hat \theta_{TFE}$ for constant-$\sigma$ model with $\epsilon = 0.1$}\label{constant_sigma_model_epsilon_0.1_trajectory_fitting_estimates_means}
\end{center}
\end{table}

\begin{table}[H]
\begin{center}
\begin{tabular}{|c||c|c|c|c|c|}
\hline
* & $n = 10^6$ & $n=10^5$ & $n = 10^4$ & $n = 10^3$ & $n = 10^2$ \\
\hline
\hline
$\eta = 0.01$ & 0.58072 & 0.58527 & 0.59206 & 0.58791 & 0.59172 \\
\hline
$\eta = 0.001$ & 0.57269 & 0.57432 & 0.56904 & 0.56154 & 0.56128 \\
\hline
$\eta = 0.0001$ & 0.56523 & 0.56427 & 0.56397 & 0.56596 & 0.56001 \\
\hline
\end{tabular}
\caption{Standard deviations of $\hat \theta_{TFE}$ for constant-$\sigma$ model with $\epsilon = 0.1$}\label{constant_sigma_model_epsilon_0.1_trajectory_fitting_estimates_stds}
\end{center}
\end{table}

\begin{table}[H]
\begin{center}
\begin{tabular}{|c||c|c|c|c|c|}
\hline
* & $n = 10^6$ & $n=10^5$ & $n = 10^4$ & $n = 10^3$ & $n = 10^2$ \\
\hline
\hline
$\eta = 0.01$ & 0.98719 & 0.98777 & 0.98919 & 0.98619 & 0.9852 \\
\hline
$\eta = 0.001$ & 0.99307 & 0.99424 & 0.99702 & 0.99042 & 0.9954 \\
\hline
$\eta = 0.0001$ & 1.00236 & 0.99737 & 1.00063 & 1.00136 & 0.99745 \\
\hline
\end{tabular}
\caption{Means of $\hat \theta_{TFE}$ for constant-$\sigma$ model with $\epsilon = 0.01$}\label{constant_sigma_model_epsilon_0.01_trajectory_fitting_estimates_means}
\end{center}
\end{table}

\begin{table}[H]
\begin{center}
\begin{tabular}{|c||c|c|c|c|c|}
\hline
* & $n = 10^6$ & $n=10^5$ & $n = 10^4$ & $n = 10^3$ & $n = 10^2$ \\
\hline
\hline
$\eta = 0.01$ & 0.22959 & 0.23078 & 0.23264 & 0.23139 & 0.2322 \\
\hline
$\eta = 0.001$ & 0.1795 & 0.17761 & 0.17749 & 0.18082 & 0.17798 \\
\hline
$\eta = 0.0001$ & 0.17353 & 0.17266 & 0.17173 & 0.17366 & 0.17349 \\
\hline
\end{tabular}
\caption{Standard deviations of $\hat \theta_{TFE}$ for constant-$\sigma$ model with $\epsilon = 0.01$}\label{constant_sigma_model_epsilon_0.01_trajectory_fitting_estimates_stds}
\end{center}
\end{table}

Comparing Tables \ref{constant_sigma_model_epsilon_0.1_trajectory_fitting_estimates_means} and \ref{constant_sigma_model_epsilon_0.01_trajectory_fitting_estimates_means} we see that when $\epsilon$ is smaller the estimates of $\theta$ are more accurate. Note also that the TFE is stable across all of the different values of $n$. In addition, we compute using the theoretical limiting standard deviation of Theorem \ref{normalitytfe} an approximate standard deviation of $0.54037$ for $\epsilon=0.1$ and $0.17088$ for $\epsilon=0.01$, both of which are very close to their empirical estimates presented in Tables \ref{constant_sigma_model_epsilon_0.1_trajectory_fitting_estimates_stds} and \ref{constant_sigma_model_epsilon_0.01_trajectory_fitting_estimates_stds} respectively.

Now we proceed with the two Hurst-index estimators.
\begin{table}[H]
\begin{center}
\begin{tabular}{|c||c|c|c|c|c|}
\hline
* & $n = 10^6$ & $n=10^5$ & $n = 10^4$ & $n = 10^3$ & $n = 10^2$ \\
\hline
\hline
$\eta = 0.01$ & 0.85 & 0.84997 & 0.84921 & 0.83503 & 0.7698 \\
\hline
$\eta = 0.001$ & 0.84998 & 0.84967 & 0.84356 & 0.81115 & 0.82201 \\
\hline
$\eta = 0.0001$ & 0.84978 & 0.84711 & 0.83132 & 0.83769 & 0.84763 \\
\hline
\end{tabular}
\caption{Means of $\hat{H}^{\epsilon}_{1}$ for constant-$\sigma$ model with $\epsilon = 0.1$}\label{constant_sigma_model_epsilon_0.1_first_type_hurst_estimates_means}
\end{center}
\end{table}
\begin{table}[H]
\begin{center}
\begin{tabular}{|c||c|c|c|c|c|}
\hline
* & $n = 10^6$ & $n=10^5$ & $n = 10^4$ & $n = 10^3$ & $n = 10^2$ \\
\hline
\hline
$\eta = 0.01$ & 5e-05 & 0.00017 & 0.00071 & 0.0059 & 0.02968 \\
\hline
$\eta = 0.001$ & 5e-05 & 0.0002 & 0.00204 & 0.01014 & 0.0145 \\
\hline
$\eta = 0.0001$ & 8e-05 & 0.00089 & 0.00486 & 0.00437 & 0.01058 \\
\hline
\end{tabular}
\caption{Standard deviations of $\hat{H}^{\epsilon}_{1}$ for constant-$\sigma$ model with $\epsilon = 0.1$}\label{constant_sigma_model_epsilon_0.1_first_type_hurst_estimates_stds}
\end{center}
\end{table}
\begin{table}[H]
\begin{center}
\begin{tabular}{|c||c|c|c|c|c|}
\hline
* & $n = 10^6$ & $n=10^5$ & $n = 10^4$ & $n = 10^3$ & $n = 10^2$ \\
\hline
\hline
$\eta = 0.01$ & 0.84998 & 0.84966 & 0.84283 & 0.77158 & 0.60576 \\
\hline
$\eta = 0.001$ & 0.84978 & 0.84684 & 0.80863 & 0.71165 & 0.71418 \\
\hline
$\eta = 0.0001$ & 0.84789 & 0.82844 & 0.76846 & 0.78014 & 0.8205 \\
\hline
\end{tabular}
\caption{Means of $\hat{H}^{\epsilon}_{1}$ for constant-$\sigma$ model with $\epsilon = 0.01$}\label{constant_sigma_model_epsilon_0.01_first_type_hurst_estimates_means}
\end{center}
\end{table}
\begin{table}[H]
\begin{center}
\begin{tabular}{|c||c|c|c|c|c|}
\hline
* & $n = 10^6$ & $n=10^5$ & $n = 10^4$ & $n = 10^3$ & $n = 10^2$ \\
\hline
\hline
$\eta = 0.01$ & 5e-05 & 0.00019 & 0.00177 & 0.01209 & 0.0391 \\
\hline
$\eta = 0.001$ & 5e-05 & 0.00041 & 0.00357 & 0.00974 & 0.02063 \\
\hline
$\eta = 0.0001$ & 0.00022 & 0.00171 & 0.00428 & 0.00575 & 0.01197 \\
\hline
\end{tabular}
\caption{Standard deviations of $\hat{H}^{\epsilon}_{1}$ for constant-$\sigma$ model with $\epsilon = 0.01$}\label{constant_sigma_model_epsilon_0.01_first_type_hurst_estimates_stds}
\end{center}
\end{table}

\begin{table}[H]
\begin{center}
\begin{tabular}{|c||c|c|c|c|c|}
\hline
* & $2n = 10^6$ & $2n=10^5$ & $2n = 10^4$ & $2n = 10^3$ & $2n = 10^2$ \\
\hline
\hline
$\eta = 0.01$ & 0.85005 & 0.851 & 0.86961 & 1.0264 & 0.83383 \\
\hline
$\eta = 0.001$ & 0.85046 & 0.85998 & 0.95922 & 0.85727 & 0.7368 \\
\hline
$\eta = 0.0001$ & 0.85441 & 0.91253 & 0.85565 & 0.79103 & 0.81603 \\
\hline
\end{tabular}
\caption{Means of $\hat{H}_{2}$ for constant-$\sigma$ model with $\epsilon = 0.1$}\label{constant_sigma_model_epsilon_0.1_second_type_hurst_estimates_means}
\end{center}
\end{table}

\begin{table}[H]
\begin{center}
\begin{tabular}{|c||c|c|c|c|c|}
\hline
* & $2n = 10^6$ & $2n=10^5$ & $2n = 10^4$ & $2n = 10^3$ & $2n = 10^2$ \\
\hline
\hline
$\eta = 0.01$ & 0.00145 & 0.00462 & 0.016 & 0.06725 & 0.17248 \\
\hline
$\eta = 0.001$ & 0.00145 & 0.00557 & 0.03151 & 0.05317 & 0.15745 \\
\hline
$\eta = 0.0001$ & 0.00197 & 0.01776 & 0.01581 & 0.04978 & 0.1502 \\
\hline
\end{tabular}
\caption{Standard deviations of $\hat{H}_{2}$ for constant-$\sigma$ model with $\epsilon = 0.1$}\label{constant_sigma_model_epsilon_0.1_second_type_hurst_estimates_stds}
\end{center}
\end{table}

\begin{table}[H]
\begin{center}
\begin{tabular}{|c||c|c|c|c|c|}
\hline
* & $2n = 10^6$ & $2n=10^5$ & $2n = 10^4$ & $2n = 10^3$ & $2n = 10^2$ \\
\hline
\hline
$\eta = 0.01$ & 0.85045 & 0.86004 & 0.99699 & 1.2951 & 0.82796 \\
\hline
$\eta = 0.001$ & 0.8544 & 0.93419 & 1.23565 & 0.8622 & 0.57189 \\
\hline
$\eta = 0.0001$ & 0.88917 & 1.15074 & 0.86317 & 0.63009 & 0.72459 \\
\hline
\end{tabular}
\caption{Means of $\hat{H}_{2}$ for constant-$\sigma$ model with $\epsilon = 0.01$}\label{constant_sigma_model_epsilon_0.01_second_type_hurst_estimates_means}
\end{center}
\end{table}

\begin{table}[H]
\begin{center}
\begin{tabular}{|c||c|c|c|c|c|}
\hline
* & $2n = 10^6$ & $2n=10^5$ & $2n = 10^4$ & $2n = 10^3$ & $2n = 10^2$ \\
\hline
\hline
$\eta = 0.01$ & 0.00146 & 0.00523 & 0.03152 & 0.05684 & 0.19151 \\
\hline
$\eta = 0.001$ & 0.00154 & 0.01011 & 0.02237 & 0.06417 & 0.17704 \\
\hline
$\eta = 0.0001$ & 0.00394 & 0.01501 & 0.01985 & 0.05434 & 0.15789 \\
\hline
\end{tabular}
\caption{Standard deviations of $\hat{H}_{2}$ for constant-$\sigma$ model with $\epsilon = 0.01$}\label{constant_sigma_model_epsilon_0.01_second_type_hurst_estimates_stds}
\end{center}
\end{table}

As the Tables above show, both estimators do reasonably well in this case, becoming more accurate as $n$ increases. The theoretical limiting standard deviations for the two Hurst-index estimators are given by Tables \ref{theoretical sd H1}-\ref{theoretical sd_H2} below and are reasonably close to the empirical ones, especially when $\epsilon=0.1$.
\begin{table}[H]
\begin{center}
\begin{tabular}{|c|c|c|c|c|}
\hline
 $n = 10^6$ & $n=10^5$ & $n = 10^4$ & $n = 10^3$ & $n = 10^2$ \\
\hline
\hline
 6e-05 & 0.00022 & 0.00085 & 0.00358 & 0.017 \\
\hline
\end{tabular}
\caption{Theoretical limiting standard deviations of $\hat H^\epsilon_1$  for all combinations of $\epsilon, \eta$.}
\label{theoretical sd H1}
\end{center}
\end{table}
\begin{table}[H]
\begin{center}
\begin{tabular}{|c|c|c|c|c|}
\hline
 $n = 10^6$ & $n=10^5$ & $n = 10^4$ & $n = 10^3$ & $n = 10^2$ \\
\hline
\hline
0.00145 & 0.00459 & 0.0145 & 0.04585 & 0.14499 \\
\hline
\end{tabular}
\caption{Theoretical limiting standard deviations of $\hat H_2$  for all combinations of $\epsilon,\eta$.}
\label{theoretical sd_H2}
\end{center}
\end{table}

\subsection{Variable-$\sigma$ Model}\label{SS:VariableVolExample}
We now consider the variable-$\sigma$ system
\begin{equation}
\begin{cases}
d \xe_t = \frac{1}{2} \theta_0 \xe_t dt + \sqrt\epsilon \frac{L}{2\pi} e^{\sin(\ye_t) + \cos(\ye_t)} dW^{H_0}_t \label{Eq:VariableSigmaModel} \\
d \ye_t = \frac{1}{2\eta} (\sin(\ye_t) - \cos(\ye_t)) dt + \frac{1}{\sqrt\eta} dB_t
\end{cases}
\end{equation}
for $t\in[0,T=1]$, where $L:=\int^{2\pi}_{0} e^{-(\sin(y)+\cos(y))} dy = \int^{2\pi}_{0} e^{\sin(y)+\cos(y)} dy$. For the purposes of averaging we regard the fast component as taking values in the circle $S$ obtained as a quotient of $\mathbb{R}$ upon identifying points whose distance from one another is an integral multiple of $2\pi$. Permitting a slight abuse of notation, let $(\xe_0, \ye_0) = (1, 0) \in \mathbb{R} \times S$.

The scaling is chosen so that once again the limit $\bar X$ of the slow process $\xe$ in (\ref{Eq:VariableSigmaModel}) is given by $\bar X_t = e^{\frac{t\theta_0}{2}}$ and $\bar\sigma = 1$, facilitating comparison of the experimental results between the two models.

We again fix $\theta_0 = 1$ and $H_0 = 0.85$ and consider the estimators $\hat \theta_{TFE}$ and $\hat{H}_2$ defined in equations (\ref{Eq:TFEDefinition}), (\ref{Eq:HatHEpsilonOneDefinition}), and (\ref{Eq:HatHTwoDefinition}) respectively. For each estimator, for $\epsilon=0.1$ and $\epsilon=0.01$, and for selected values of $\eta$ and $n$ (or $2n$ in the case of $\hat{H}_2$), we perform $10,000$ simulations. The slow and fast trajectories are simulated according to an Euler-Maruyama scheme with $10^6$ evenly-spaced discrete time steps.


Tables \ref{variable_sigma_model_epsilon_0.1_trajectory_fitting_estimates_means}-\ref{variable_sigma_model_epsilon_0.01_second_type_hurst_estimates_stds} present the empirical means and standard deviations. 
We start with the TFE $\hat \theta_{TFE}$ estimator.
\begin{table}[H]
\begin{center}
\begin{tabular}{|c||c|c|c|c|c|}
\hline
* & $n = 10^6$ & $n=10^5$ & $n = 10^4$ & $n = 10^3$ & $n = 10^2$ \\
\hline
\hline
$\eta = 0.01$ & 0.92498 & 0.9297 & 0.91818 & 0.92298 & 0.91503 \\
\hline
$\eta = 0.001$ & 0.94111 & 0.94252 & 0.93942 & 0.93276 & 0.93712 \\
\hline
$\eta = 0.0001$ & 0.9289 & 0.93416 & 0.94348 & 0.93748 & 0.94499 \\
\hline
\end{tabular}
\caption{Means of $\hat \theta_{TFE}$ for variable-$\sigma$ model with $\epsilon = 0.1$}\label{variable_sigma_model_epsilon_0.1_trajectory_fitting_estimates_means}
\end{center}
\end{table}

\begin{table}[H]
\begin{center}
\begin{tabular}{|c||c|c|c|c|c|}
\hline
* & $n = 10^6$ & $n=10^5$ & $n = 10^4$ & $n = 10^3$ & $n = 10^2$ \\
\hline
\hline
$\eta = 0.01$ & 0.63444 & 0.64771 & 0.65125 & 0.6456 & 0.65453 \\
\hline
$\eta = 0.001$ & 0.57226 & 0.579 & 0.58357 & 0.58011 & 0.5778 \\
\hline
$\eta = 0.0001$ & 0.56799 & 0.57611 & 0.56143 & 0.57273 & 0.56516 \\
\hline
\end{tabular}
\caption{Standard deviations of $\hat \theta_{TFE}$ for variable-$\sigma$ model with $\epsilon = 0.1$}\label{variable_sigma_model_epsilon_0.1_trajectory_fitting_estimates_stds}
\end{center}
\end{table}

\begin{table}[H]
\begin{center}
\begin{tabular}{|c||c|c|c|c|c|}
\hline
* & $n = 10^6$ & $n=10^5$ & $n = 10^4$ & $n = 10^3$ & $n = 10^2$ \\
\hline
\hline
$\eta = 0.01$ & 0.98835 & 0.99125 & 0.99326 & 0.99246 & 0.99009 \\
\hline
$\eta = 0.001$ & 0.99302 & 0.99112 & 0.99112 & 0.99488 & 0.99388 \\
\hline
$\eta = 0.0001$ & 0.99503 & 0.99587 & 0.99252 & 0.99287 & 0.99446 \\
\hline
\end{tabular}
\caption{Means of $\hat \theta_{TFE}$ for variable-$\sigma$ model with $\epsilon = 0.01$}\label{variable_sigma_model_epsilon_0.01_trajectory_fitting_estimates_means}
\end{center}
\end{table}

\begin{table}[H]
\begin{center}
\begin{tabular}{|c||c|c|c|c|c|}
\hline
* & $n = 10^6$ & $n=10^5$ & $n = 10^4$ & $n = 10^3$ & $n = 10^2$ \\
\hline
\hline
$\eta = 0.01$ & 0.20009 & 0.19998 & 0.19965 & 0.19787 & 0.1979 \\
\hline
$\eta = 0.001$ & 0.17634 & 0.17853 & 0.17669 & 0.17456 & 0.17539 \\
\hline
$\eta = 0.0001$ & 0.17297 & 0.17298 & 0.17149 & 0.1724 & 0.17242 \\
\hline
\end{tabular}
\caption{Standard deviations of $\hat \theta_{TFE}$ for variable-$\sigma$ model with $\epsilon = 0.01$}\label{variable_sigma_model_epsilon_0.01_trajectory_fitting_estimates_stds}
\end{center}
\end{table}

Our conclusions here are the same as those for the TFE in the constant-$\sigma$ example of Subsection \ref{SS:ConstantVolExample}. Note that because we have constructed the two examples so that certain limiting quantities coincide, the theoretical standard deviation values for the TFE $\hat \theta_{TFE}$ are the same as those given in \ref{SS:ConstantVolExample}.

Let us now proceed with the Hurst-index estimators.
\begin{table}[H]
\begin{center}
\begin{tabular}{|c||c|c|c|c|c|}
\hline
* & $n = 10^6$ & $n=10^5$ & $n = 10^4$ & $n = 10^3$ & $n = 10^2$ \\
\hline
\hline
$\eta = 0.01$ & 0.82242 & 0.81812 & 0.8117 & 0.80079 & 0.77864 \\
\hline
$\eta = 0.001$ & 0.82287 & 0.81827 & 0.81088 & 0.79324 & 0.79413 \\
\hline
$\eta = 0.0001$ & 0.82255 & 0.81711 & 0.8041 & 0.80577 & 0.83329 \\
\hline
\end{tabular}
\caption{Means of $\hat H^\epsilon_1$ for variable-$\sigma$ model with $\epsilon = 0.1$}\label{variable_sigma_model_epsilon_0.1_first_type_hurst_estimates_means}
\end{center}
\end{table}

\begin{table}[H]
\begin{center}
\begin{tabular}{|c||c|c|c|c|c|}
\hline
* & $n = 10^6$ & $n=10^5$ & $n = 10^4$ & $n = 10^3$ & $n = 10^2$ \\
\hline
\hline
$\eta = 0.01$ & 0.01364 & 0.01615 & 0.01941 & 0.02536 & 0.04578 \\
\hline
$\eta = 0.001$ & 0.00446 & 0.00527 & 0.0066 & 0.0114 & 0.0261 \\
\hline
$\eta = 0.0001$ & 0.00143 & 0.00172 & 0.00303 & 0.00693 & 0.01275 \\
\hline
\end{tabular}
\caption{Standard deviations of $\hat H^\epsilon_1$ for variable-$\sigma$ model with $\epsilon = 0.1$}\label{variable_sigma_model_epsilon_0.1_first_type_hurst_estimates_stds}
\end{center}
\end{table}

\begin{table}[H]
\begin{center}
\begin{tabular}{|c||c|c|c|c|c|}
\hline
* & $n = 10^6$ & $n=10^5$ & $n = 10^4$ & $n = 10^3$ & $n = 10^2$ \\
\hline
\hline
$\eta = 0.01$ & 0.82251 & 0.81802 & 0.81142 & 0.80042 & 0.77952 \\
\hline
$\eta = 0.001$ & 0.82286 & 0.81826 & 0.81072 & 0.79297 & 0.79494 \\
\hline
$\eta = 0.0001$ & 0.82254 & 0.81707 & 0.80411 & 0.80571 & 0.83334 \\
\hline
\end{tabular}
\caption{Means of $\hat H^\epsilon_1$ for variable-$\sigma$ model with $\epsilon = 0.01$}\label{variable_sigma_model_epsilon_0.01_first_type_hurst_estimates_means}
\end{center}
\end{table}

\begin{table}[H]
\begin{center}
\begin{tabular}{|c||c|c|c|c|c|}
\hline
* & $n = 10^6$ & $n=10^5$ & $n = 10^4$ & $n = 10^3$ & $n = 10^2$ \\
\hline
\hline
$\eta = 0.01$ & 0.01377 & 0.01605 & 0.01918 & 0.02555 & 0.04554 \\
\hline
$\eta = 0.001$ & 0.00447 & 0.00522 & 0.00657 & 0.01132 & 0.02596 \\
\hline
$\eta = 0.0001$ & 0.00142 & 0.00171 & 0.00309 & 0.00695 & 0.01272 \\
\hline
\end{tabular}
\caption{Standard deviations of $\hat H^\epsilon_1$ for variable-$\sigma$ model with $\epsilon = 0.01$}\label{variable_sigma_model_epsilon_0.01_first_type_hurst_estimates_stds}
\end{center}
\end{table}

\begin{table}[H]
\begin{center}
\begin{tabular}{|c||c|c|c|c|c|}
\hline
* & $2n = 10^6$ & $2n=10^5$ & $2n = 10^4$ & $2n = 10^3$ & $2n = 10^2$ \\
\hline
\hline
$\eta = 0.01$ & 0.85002 & 0.85028 & 0.85186 & 0.86408 & 0.81163 \\
\hline
$\eta = 0.001$ & 0.85015 & 0.85219 & 0.86907 & 0.86401 & 0.68595 \\
\hline
$\eta = 0.0001$ & 0.85134 & 0.87005 & 0.87218 & 0.69986 & 0.76573 \\
\hline
\end{tabular}
\caption{Means of $\hat{H}_{2}$ for variable-$\sigma$ model with $\epsilon = 0.1$}\label{variable_sigma_model_epsilon_0.1_second_type_hurst_estimates_means}
\end{center}
\end{table}

\begin{table}[H]
\begin{center}
\begin{tabular}{|c||c|c|c|c|c|}
\hline
* & $2n = 10^6$ & $2n=10^5$ & $2n = 10^4$ & $2n = 10^3$ & $2n = 10^2$ \\
\hline
\hline
$\eta = 0.01$ & 0.00418 & 0.01341 & 0.04129 & 0.12186 & 0.26647 \\
\hline
$\eta = 0.001$ & 0.00436 & 0.01365 & 0.04039 & 0.09693 & 0.16591 \\
\hline
$\eta = 0.0001$ & 0.00424 & 0.01309 & 0.03379 & 0.05615 & 0.15515 \\
\hline
\end{tabular}
\caption{Standard deviations of $\hat{H}_{2}$ for variable-$\sigma$ model with $\epsilon = 0.1$}\label{variable_sigma_model_epsilon_0.1_second_type_hurst_estimates_stds}
\end{center}
\end{table}

\begin{table}[H]
\begin{center}
\begin{tabular}{|c||c|c|c|c|c|}
\hline
* & $2n = 10^6$ & $2n=10^5$ & $2n = 10^4$ & $2n = 10^3$ & $2n = 10^2$ \\
\hline
\hline
$\eta = 0.01$ & 0.84999 & 0.85019 & 0.85115 & 0.86087 & 0.81679 \\
\hline
$\eta = 0.001$ & 0.85015 & 0.85238 & 0.86952 & 0.86384 & 0.68816 \\
\hline
$\eta = 0.0001$ & 0.8513 & 0.86992 & 0.87149 & 0.69979 & 0.76835 \\
\hline
\end{tabular}
\caption{Means of $\hat{H}_{2}$ for variable-$\sigma$ model with $\epsilon = 0.01$}\label{variable_sigma_model_epsilon_0.01_second_type_hurst_estimates_means}
\end{center}
\end{table}

\begin{table}[H]
\begin{center}
\begin{tabular}{|c||c|c|c|c|c|}
\hline
* & $2n = 10^6$ & $2n=10^5$ & $2n = 10^4$ & $2n = 10^3$ & $2n = 10^2$ \\
\hline
\hline
$\eta = 0.01$ & 0.00413 & 0.0131 & 0.04173 & 0.12224 & 0.26606 \\
\hline
$\eta = 0.001$ & 0.00428 & 0.01344 & 0.0406 & 0.09806 & 0.16667 \\
\hline
$\eta = 0.0001$ & 0.00426 & 0.01309 & 0.03325 & 0.05559 & 0.15577 \\
\hline
\end{tabular}
\caption{Standard deviations of $\hat{H}_{2}$ for variable-$\sigma$ model with $\epsilon = 0.01$}\label{variable_sigma_model_epsilon_0.01_second_type_hurst_estimates_stds}
\end{center}
\end{table}

We notice that $\hat{H}_{2}$ does a better job of correctly estimating $H$ than $\hat{H}_{1}$. This can be explained by noting that $\hat{H}_{1}$ depends on knowledge of the limiting coefficient $\bar{\sigma}$ whereas the actual data come from the prelimit model. It is therefore not surprising that the convergence here is slow. On the other hand $\hat{H}_{2}$ is based only on the variation in the sample, which may allow it to converge more quickly in such cases. Note that as for the TFE the theoretical standard deviation values are the same as those given in Subsection \ref{SS:ConstantVolExample}.

\appendix

\section{Auxiliary Results}\label{B:Appendix}

The following lemma bounds the moments of the maximum process of $|Y|$ with respect to $\eta$.

\vspace{1pc}
\begin{lemma}\label{ymaximal} Assume Conditions \ref{c:regularity} and \ref{c:recurrencebasic}. For any $ 0 \leq p < \infty $ and any $\zeta>0$,

\begin{align*}
E(\sup_{0\leq t\leq T}|\ye_t|^p)=o\left(\eta^{ - \zeta }\right)
\end{align*}
as $\eta\to0$.
\end{lemma}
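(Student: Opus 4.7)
My plan is to rescale time to eliminate the $1/\eta$ drift, establish uniform-in-time polynomial moment bounds for the rescaled process, and then upgrade pointwise moment control to a supremum estimate via a union bound over unit sub-intervals, whose cost in $\eta$ can be made arbitrarily small by taking sufficiently high moments.

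\textbf{Step 1 (time change).} Setting $\tilde Y_s := \ye_{\eta s}$, one obtains that $\tilde Y$ solves the unrescaled SDE $d\tilde Y_s = f(\tilde Y_s)\,ds + \tau(\tilde Y_s)\,d\tilde B_s$ for some Brownian motion $\tilde B$, and $\sup_{0 \leq t \leq T}|\ye_t| = \sup_{0 \leq s \leq T/\eta}|\tilde Y_s|$. The problem thus reduces to bounding $E \sup_{0 \leq s \leq N}|\tilde Y_s|^p$, where $N := \lceil T/\eta \rceil$, for the unrescaled process.

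\textbf{Step 2 (uniform moments).} For each integer $q \geq 1$, consider the Lyapunov function $V(y) = (1+|y|^2)^{q/2}$. A direct computation together with the uniform boundedness of $\tau\tau^T$ (part of Condition \ref{c:regularity}) and Condition \ref{c:recurrencebasic} shows that $\mathcal{L}V(y) \to -\infty$ as $|y| \to \infty$, i.e., a Foster--Lyapunov drift condition holds outside an arbitrarily large compact set. Combining this with the existence and uniqueness of the invariant measure $\mu$ already cited in the paper (from \cite{ReyBellet2006}) via standard Has'minskii/Meyn--Tweedie type arguments yields $M_q := \sup_{s \geq 0} E|\tilde Y_s|^q < \infty$.

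\textbf{Step 3 (supremum via chaining and Markov).} Split $[0,N]$ into unit sub-intervals. By standard local SDE estimates based on Condition \ref{c:regularity} (local Lipschitz control on $f$, boundedness of $\tau\tau^T$), one has $E \sup_{k \leq s \leq k+1}|\tilde Y_s|^q \leq C_q(1 + E|\tilde Y_k|^q) \leq \tilde M_q$ uniformly in $k$. A union bound combined with Markov's inequality then gives $P\bigl(\sup_{0 \leq s \leq N}|\tilde Y_s| > \lambda\bigr) \leq N \tilde M_q / \lambda^q$. Integrating this tail bound and optimizing over a threshold $\lambda_0 = (N \tilde M_q)^{1/q}$ yields $E \sup_{0\leq s \leq N}|\tilde Y_s|^p \leq C_{p,q} N^{p/q} = O(\eta^{-p/q})$ for any $q > p$. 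Since $q$ may be taken arbitrarily large and $p$ is fixed, for any prescribed $\zeta > 0$ one picks $q > p/\zeta$ to obtain the claimed $o(\eta^{-\zeta})$ bound.

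\textbf{Main obstacle.} The principal difficulty lies in Step 2. Condition \ref{c:recurrencebasic} provides only the qualitative statement $y \cdot f(y) \to -\infty$ with no quantitative rate, so one does not directly have a drift condition of the clean form $\mathcal{L}V \leq K - \lambda V$ that yields moment bounds by Gr\"onwall. Indeed, a naive estimate $\mathcal{L}|y|^2 \leq C$ gives only linear-in-$s$ growth of $E|\tilde Y_s|^2$, which on the time horizon $T/\eta$ amounts to $O(\eta^{-1})$ and is insufficient. The uniform-in-time polynomial moment bound therefore requires the more delicate Foster--Lyapunov framework exploiting the fact that $\mathcal{L}V$ not only is non-positive but in fact diverges to $-\infty$ at infinity, together with the uniform non-degeneracy of $\tau\tau^T$ and the positive recurrence of $\tilde Y$.
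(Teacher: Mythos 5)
Your argument is correct in outline, but it takes a genuinely different and more self-contained route than the paper. The paper's proof is essentially a two-line reduction: after the same time change, it invokes Proposition 2 of \cite{pardoux2001poisson}, which already packages the hard estimate $E\sup_{0\leq s\leq t}|\ye_{\eta s}|^q = o(\sqrt t)$ for every $q$, and then removes the resulting $\eta^{-1/2}$ by a Jensen interpolation with the exponent choice $q=p/(2\zeta)$. You instead rebuild the long-horizon supremum bound from scratch: a Lyapunov computation with $V_q(y)=(1+|y|^2)^{q/2}$, unit-interval BDG/Gronwall estimates (valid since the global H\"older continuity of $f$ in Condition \ref{c:regularity} gives at most linear growth and $\tau$ is bounded), and a union bound yielding $O(N^{p/q})$ over a horizon $N\asymp \eta^{-1}$, after which taking $q>p/\zeta$ plays the same role as the paper's Jensen step. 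What each buys: your route avoids the external citation and makes the mechanism transparent, at the cost of having to justify Step 2 honestly --- the existence and uniqueness of the invariant measure does not by itself control moments of the process started from a fixed initial point, so the phrase ``via standard Has'minskii/Meyn--Tweedie arguments'' is the thin spot; what you actually need (and what your ``main obstacle'' paragraph correctly points toward) is the quantitative drift inequality $\mathcal{L}V_q \leq A_q - c\,V_q^{1-2/q}$, which follows from $y\cdot f(y)\to-\infty$ and boundedness of $\tau\tau^T$, and which combined with concavity of $x\mapsto x^{1-2/q}$, Jensen, and an ODE comparison gives $\sup_{s\geq0}E|\ye_{\eta s}|^q<\infty$; this is precisely the content of the Pardoux--Veretennikov estimates that the paper cites instead of reproving.
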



\vspace{1pc}
\noindent\textit{Proof.} By Proposition 2 of \cite{pardoux2001poisson}, one has for the time-rescaled fast process, for any $ 0 \leq q < \infty $, the relation
\begin{align*}
E(\sup_{0\leq s\leq t}|\ye_{\eta s}|^q)=o(\sqrt t)
\end{align*}
as $t\to\infty$. This yields
\begin{align*}
E(\sup_{0\leq t\leq T}|\ye_t|^q)=o(\frac{1}{\sqrt\eta})
\end{align*}
as $\eta\to0$, whence the statement is immediate for $\zeta\geq1/2$. For $0<\zeta<1/2$, apply Jensen's inequality and substitute $q=p/2\zeta$:
\begin{align*}
E(\sup_{0\leq t\leq T}|\ye_t|^p)&\leq\left(E(\sup_{0\leq t\leq T}|\ye_t|^{p/2\zeta})\right)^{2\zeta}\\
&=o((\frac{1}{\sqrt\eta})^{2\zeta}).
\end{align*}

\qed

\section{Preliminaries}\label{A:Appendix}

\subsection{Fractional Brownian motion}\label{SS:fBm_preliminaries}

A fractional Brownian motion (fBm) is a centered Gaussian process $W^H =
\{ W^H_t \}_{ t \geq 0 } \subset L^2(\Omega)$, characterized by its
covariance function
\begin{equation*}
R_H(t,s) := E (W^H_t W^H_s) = \frac{1}{2} \left( s^{2H} + t^{2H} -
  \left\vert t-s \right\rvert^{2H} \right).
\end{equation*}
It is straightforward to verify that increments of fBm are stationary. The parameter $H \in (0,1)$ is usually referred to as the Hurst exponent, Hurst parameter, or Hurst index.

By Kolmogorov's continuity criterion, such a process admits a modification with continuous sample paths, and we always choose to work with such. In this case one may show in fact that almost every sample path is locally H\"older continuous of any order strictly less than $H$. It is this sense in which it is often said that the value of $H$ determines the regularity of the sample paths.

Note that when $H = \frac{1}{2}$, the covariance function is $R_{\frac{1}{2}}(t,s) = t \wedge s$. Thus, one sees that $W^{\frac{1}{2}}$ is a standard Brownian motion, and in particular that its disjoint increments are independent. In contrast to this, when $H \neq \frac{1}{2}$, nontrivial increments are not independent. In particular, when $H > \frac{1}{2}$, the process exhibits long-range dependence.

Note moreover that when $H \neq \frac{1}{2}$, the fractional Brownian motion is not a semimartingale, and the usual It\^o calculus therefore does not apply.

Another noteworthy property of fractional Brownian motion is that it
is self-similar in the sense that, for any constant $a >0$, the
processes $\left\{ W^H_t\right\}_{ t \geq 0 }$ and $\left\{ a^{-H}
  W^H_{at}\right\}_{ t \geq 0 }$ have the same distribution.

For more details about fractional Brownian motion, we refer the reader
to the monographs \cite{biagini_stochastic_2008, nourdin_selected_2012, nualart_malliavin_2006}.

The self-similarity and long-memory properties of the fractional
Brownian motion make it an interesting and suitable input noise in
many models in various fields such as analysis of financial time series,
hydrology, and telecommunications. However, in order to develop
interesting models based on fractional Brownian motion, one needs a
stochastic calculus with respect to the fBm, which will make use of
the stochastic calculus of variations, or Malliavin calculus, introduced
in the next subsection.

\subsection{Elements of Malliavin calculus}\label{SS:MalliavinCalculus_prelim}
\label{malliavinelements}
We outline here the main tools of Malliavin calculus needed in this
paper. For a complete treatment of this topic, we refer the reader to
\cite{nualart_malliavin_2006}.

Let $W^H = \left\{ W^H_t \right\}_{ t \geq 0 } \subset L^2(\Omega)$ be a fractional Brownian motion with Hurst index $H \in (\frac{1}{2},1)$ and let us fix a time interval $[0,T]$, where $T\in\mathbb{R}_+$.

The formula
\begin{equation*}
\left\langle \chi_{[0,s]},\chi_{[0,t]}\right\rangle_{\mathfrak{H}} := R_H(s,t)
\end{equation*}
induces an inner product on the set $\mathcal{E}$ of step functions on $[0, T]$. We denote by $\mathfrak{H}$ the Hilbert space obtained as the completion of the resulting inner product space.

It can be shown that the formula
\begin{equation}
\label{innerprodfbm}
\left\langle \varphi, \psi \right\rangle_{\mathfrak{H}} := \alpha_H \int_0^T \int_0^T \varphi(r)\psi(u)\left\vert r-u \right\rvert^{2H-2}du dr,
\end{equation}
with $\alpha_H := H(2H-1)$, extends the above inner product from $\mathcal{E}$ to the superset $L^2([0, T])$, and that it is equivalent to define $\mathfrak{H}$ as the completion of this extended inner product space (see e.g. \cite{decreusefond_stochastic_1999}).

Now, the map $\chi_{[0, t]} \mapsto W^H_t$ extends to a linear isometry of Hilbert spaces $\mathfrak{H} \to L^2(\Omega)$. We will denote this map also by $W^H$.

Recall that we are in the setting in which $H > \frac{1}{2}$. While one may interpret $\mathfrak{H}$ as a space of distributions, it has been shown in \cite{pipiras_integration_2000,pipiras_are_2001} that when $H > \frac{1}{2}$, the elements may not be ordinary functions but distributions of negative order. Adapting the inner product \eqref{innerprodfbm}, one can introduce the space $\vert \mathfrak{H} \vert$ of equivalence classes of measurable functions $\varphi$ on $[0, T]$ for which

\begin{equation*}
\left\lVert \varphi \right\rVert_{\vert \mathfrak{H} \vert}^2 := \alpha_H
\int_0^T \int_0^T \left\vert \varphi(r)
\right\rvert \left\vert \varphi(u)
\right\rvert \left\vert r-u
\right\rvert^{2H-2}du dr < \infty,
\end{equation*}
which is in fact a Banach space equipped with this square norm.

It can be shown that one has the following chain of continuous inclusions:
\begin{equation*}
L^2([0, T]) \subset L^{\frac{1}{H}}([0,T]) \subset \vert \mathfrak{H} \vert \subset \mathfrak{H}.
\end{equation*}

Let us now denote by $\mathcal{S}$ the set of smooth cylindrical random variables
of the form $F = f
\left( W^H(\varphi_1), \cdots , W^H(\varphi_n) \right)$, where $n \geq 1$, $\{\varphi_i\}^n_{i=1} \subset
\mathfrak{H}$, and $f \in C_b^{\infty} \left(
  \mathbb{R}^n \right)$ ($f$ and all of its partial derivatives of all orders are bounded functions).

The Malliavin derivative of such a smooth cylindrical random variable
$F$ is defined as the $\mathfrak{H}$-valued random variable given by
\begin{equation*}
DF := \sum_{i=1}^n \frac{\partial f}{\partial x_i} \left( W^H(\varphi_1),
  \cdots, W^H(\varphi_n) \right)\varphi_i.
\end{equation*}
The derivative operator $D$ is a closable operator from $L^2(\Omega)$
into $L^2(\Omega ; \mathfrak{H})$, and we continue to denote by $D$
the closure of the derivative operator, the domain of which we denote by $\mathbb{D}^{1,2}$, and which is a Hilbert space in the Sobolev-type norm
\begin{equation*}
\left\lVert F \right\rVert_{1,2}^2 := E (F^2) + E \left( \left\lVert DF \right\rVert_{\mathfrak{H}}^2 \right).
\end{equation*}

Similarly one obtains a derivative operator $D:\mathbb{D}^{1, 2}(\mathfrak{H}) \to L^2(\Omega; \mathfrak{H} \otimes \mathfrak{H})$ as the closure of $D:L^2(\Omega; \mathfrak{H}) \to L^2(\Omega; \mathfrak{H} \otimes \mathfrak{H})$, and so on.

Note that more generally with $p > 1$ one can analogously obtain $\mathbb{D}^{1, p}$ as Banach spaces of Sobolev type by working with $L^p(\Omega)$.
\\~\\
We can now introduce the divergence operator
$\delta$ as the adjoint of the derivative operator $D$. By definition, an
$\mathfrak{H}$-valued random variable $u \in L^2(\Omega ;
\mathfrak{H})$ is in the domain of $\delta$, which we denote by
$\operatorname{dom}\delta$, if there is a constant $c_u$ for which, for all $F \in \mathbb{D}^{1, 2}$,
\begin{equation*}
\left\vert E \left( \left\langle DF ,u \right\rangle_{\mathfrak{H}} \right)
\right\rvert \leq c_u \left\vert F \right\rvert_{L^2(\Omega)}.
\end{equation*}
For such an element $u$, $\delta(u)$ is defined
by duality as the unique element of $L^2(\Omega)$ such that, for each $F \in \mathbb{D}^{1, 2}$,
\begin{equation*}
E \left( F \delta(u) \right) = E \left( \left\langle DF, u \right\rangle_{\mathfrak{H}} \right).
\end{equation*}
It can be shown that
$\mathbb{D}^{1,2}(\mathfrak{H}) \subset \operatorname{dom}\delta$, and
that for any $u \in \mathbb{D}^{1,2}(\mathfrak{H})$,
\begin{equation*}
E \left( \delta(u)^2 \right) = E \left(\left\lVert u
  \right\rVert_{\mathfrak{H}}^2  \right) + E \left( \left\langle Du,
    (Du)^{*} \right\rangle_{\mathfrak{H}\otimes \mathfrak{H}} \right),
\end{equation*}
where $(Du)^{*}$ is the adjoint of $Du$ in the Hilbert space $\mathfrak{H}\otimes \mathfrak{H}$.\\

\subsection{Multiple Wiener integrals of deterministic functions with respect to fractional Brownian motion}

\subsection{Stochastic integration with respect to fractional Brownian
  motion}\label{Eq:StochasticIntergation_prelim}
~\\
In this subsection we state useful properties of multiple Wiener integrals of elements of $\mathfrak{H}$ with respect the fractional Brownian motion and introduce two main methods used to define stochastic integrals with respect to the fractional Brownian motion. These and other available approaches are collected and discussed in detail in the monograph \cite{biagini_stochastic_2008}.

The first method, introduced in \cite{decreusefond_stochastic_1999}, is based on the stochastic calculus of variations, or Malliavin calculus. Owing to the central role played by the divergence operator introduced in Subsection \ref{malliavinelements}, stochastic integrals of this type are commonly referred to as divergence integrals.

The second approach uses the fact that the H\"older regularity of the paths of fBm with $H > \frac{1}{2}$ is sufficient to allow integration in the sense of Z\"ahle \cite{zahle_integration_1998} or \cite{russo_forward_1993} (see also the classic paper \cite{young_inequality_1936}). Stochastic integrals of this type are often called pathwise integrals.
\begin{remark}
The divergence integral can be formulated for fractional Brownian motion with any $H \in (0, 1)$ whereas the pathwise integral exists only for $H > \frac{1}{2}$. One reason that we restrict attention to the case $H > \frac{1}{2}$ in this work is so that we may make use of known results for both.
\end{remark}

\subsubsection{Multiple Wiener integrals}\label{SS:MultipleWienerIntergation}
For $q \in \mathbb{N}$ and $f \in \frak{H}^{\otimes q}$, we denote by
$I_q(f)$ the multiple Wiener integral of order $q$ with respect to the
fractional Brownian motion $W^H$ introduced in the preceding subsections
(when several different fractional Brownian motions are being used, a
superscript will be added to the multiple integral notation to avoid
ambiguities as to which fractional Brownian motion is the
integrator). We refer the reader to \cite[Chapter 1]{nualart_malliavin_2006} for a definition and construction of these objects. We limit ourselves here to recalling the main properties of multiple Wiener integrals needed in this paper, the first of which is
that multiple Wiener integrals of different orders
are orthogonal in $L^2(\Omega)$. The multiple Wiener integrals form an algebra with the
following product rule: for $q,p \in \mathbb{N}$, $f \in
\frak{H}^{\otimes q}$ and $g \in \frak{H}^{\otimes p}$,
\begin{equation}
  \label{multwienerintproductrule}
  I_q(f)I_p(g) = \sum_{r= 0}^{q \wedge
    p}r!\binom{q}{r}\binom{p}{r}I_{p+q - 2r}\left( f \otimes_r g \right),
\end{equation}
where $f \otimes_0 g := f \otimes g$ and for each $1 \leq r \leq q\wedge
p$, the contraction $f \otimes_r g$ is given by
\begin{equation*}
f \otimes_r g := \sum_{i_1, \ldots,i_r =1}^{\infty}\left\langle f,
  e_{i_1}\otimes \ldots \otimes e_{i_r}
\right\rangle_{\frak{H}^{\otimes r}} \otimes \left\langle g,
  e_{i_1}\otimes \ldots \otimes e_{i_r}
\right\rangle_{\frak{H}^{\otimes r}}
\end{equation*}
where $\{ e_k \}^\infty_{k = 1}$ is any complete orthonormal
system in $\frak{H}$.

\subsubsection{Divergence integration}\label{SS:DivergenceIntergation}

The definition of the divergence operator as the adjoint of the Malliavin derivative operator suggests interpretation as an integral. Indeed, in the standard Brownian motion case ($H = \frac{1}{2}$), the divergence of an adapted, It\^o-integrable process coincides with its familiar It\^o integral. In general one defines, for $u \in \operatorname{dom} \delta$ and $0 \leq t \leq T$,
\begin{equation*}
\int_0^t u_s \delta W^H_s := \delta(u
\chi_{[0,t]}),
\end{equation*}
which we call the divergence integral of $u$. Note that the divergence integral is always centered in the sense that its expected value is zero.
\\~\\
We shall make use of a maximal inequality for the divergence integral, which we now state. The interested reader is referred to \cite{alosnualart} for more details.

Denote by $\mathbb{L}_H^{1,p}$ the set of elements
$u \in \mathbb{D}^{1,p}(\mathfrak{H})$ for which
\begin{equation*}
E \left(
  \left\vert u \right\rvert_{L^{\frac{1}{H}}([0,T])}^p + \left\vert
    Du \right\rvert_{L^{\frac{1}{H}}([0,T]^2)}^p \right) < \infty.
\end{equation*}
There is a constant $C$ depending only on $H$ and $T$ such that for any $p$ with $pH > 1$ and any $u \in \mathbb{L}_H^{1, p}$,
\begin{equation*}
E \left( \sup_{0 \leq t \leq T} \left\vert \int_0^t u_s \delta W^H_s
\right\rvert^p \right) \leq C \left[ \int_0^T \left\vert E \left( u_s \right)\right\rvert^p ds + \int_0^T E
\left( \int_0^T \left\vert D_s u_r
\right\rvert^{\frac{1}{H}}ds \right)^{pH}dr \right].
\end{equation*}
Here, $D u_r$ is being interpreted as a stochastic process and the subscript $s$ in the notation $D_s u_r$ refers to its parameter. Note that if we denote by $\lambda$ the Lebesgue measure on $[0, T]$, by $P$ the probability measure on $\Omega$, and by $\omega \in \Omega$ the random state, then $D_s u_r$ is defined for $\lambda \times P$-almost-every pair $(s, \omega)$.

\subsubsection{Pathwise integration}\label{SS:PathwiseIntergation}

We present a version of pathwise integration that appears by the name of symmetric stochastic integration in \cite{russo_forward_1993}.

Let $u = \left\{ u_t \right\}_{0 \leq t \leq T}$ be a stochastic
process in $\mathbb{D}^{1,2}(\mathfrak{H})$. If one has that
\begin{equation*}
E \left(\left\lVert u \right\rVert_{\vert \mathfrak{H} \vert}^2 +
  \left\lVert Du \right\rVert_{\vert \mathfrak{H} \vert \otimes \vert
    \mathfrak{H} \vert}^2  \right) < \infty
\end{equation*}
and
\begin{equation*}
\int_0^T \int_0^T \left\vert D_su_t \right\rvert \left\vert t-s
\right\rvert^{2H-2} ds dt < \infty \operatorname{~  a.s.},
\end{equation*}
then the symmetric integral
\begin{equation*}
\int_0^T u_t dW^H_t
\end{equation*}
defined as the limit in probability as $\varepsilon$ tends to zero of
\begin{equation*}
\frac{1}{2\varepsilon} \int_0^T u_s \left( W^H_{(s+\varepsilon) \wedge
  T} - W^H_{(s-\varepsilon) \vee 0}\right)ds
\end{equation*}
exists and for each $t \in [0,T]$,
\begin{equation}
  \label{relationpathwise-divergence}
\int_0^t u_s dW^H_s =  \int_0^t u_s \delta W^H_s + \alpha_H \int_0^t \int_0^T D_r u_s \left\vert s-r
\right\rvert^{2H-2} dr ds.
\end{equation}
Thus one sees how the pathwise and divergence integrals are related to one another. Note in particular that whereas the divergence integral is centered, the pathwise integral generally speaking is not. In the setting of the model in this paper, however, the two integrals coincide.
\begin{remark}
Note that whenever one has $Du = 0$, as is the case for instance when the integrand $u$ and the fractional Brownian motion $W^H$ are independent stochastic processes, the relation \eqref{relationpathwise-divergence} says
\begin{equation*}
\int_0^t u_s dW^H_s =  \int_0^t u_s \delta W^H_s,
\end{equation*}
which is to say that the two approaches lead to the same integral and in particular that both are centered.
\end{remark}

\end{document}